\renewcommand{\thefootnote}{\fnsymbol{footnote}}
\newtheorem{theo}{Theorem}[section]
\newtheorem{prop}{Proposition}[section]
\newtheorem{lemm}{Lemma}[section]
\newtheorem{fact}{Fact}[section]
\theoremstyle{definition}
\newtheorem{defi}{Definition}[section]
\theoremstyle{remark}
\newtheorem{rema}{Remark}[section]
\newtheorem{ques}{Question}[section]
\newtheorem{prob}{Problem}[section]
\title{Extremal metrics involving scalar curvature}
\author{Shota Hamanaka}
\date{\today}
\begin{document}

\maketitle

\renewcommand{\thefootnote}{\fnsymbol{footnote}} 
\footnotetext{\emph{Keywords}: Scalar curvature rigidity}     
\renewcommand{\thefootnote}{\arabic{footnote}}

\renewcommand{\thefootnote}{\fnsymbol{footnote}} 
\footnotetext{\emph{2020 Mathematics Subject Classification}: 53C20, 53C24}     
\renewcommand{\thefootnote}{\arabic{footnote}}
  
\begin{abstract}
We investigate extremal metrics at which various types of rigidity theorems involving scalar curvatures hold. 
The rigidity we discuss here is related to the rigidity theorems presented by Mario Listing in his previous preprint.
More specifically, we give some sufficient conditions for the metrics not to be rigid in this sense.
We also give several examples of Riemannian manifolds that satisfy such sufficient conditions.
\end{abstract}

\section{Introduction}\label{section-introduction}
~~~Llarull \cite{llarull1998sharp} showed some rigidity results for the standard sphere.
And Goette and Semmelmann \cite{goette2002scalar} generalized it to locally symmetric spaces of compact type and nontrivial Euler characteristic.
Later, Listing \cite{listing2010scalar} generalized their results in the following form.
\begin{theo}[{\cite[Theorem 1]{listing2010scalar}}]
\label{theo-listing-1}
    Let $(M_{0}^{n}, g_{0})~(n \ge 3)$ be an oriented spin closed Riemannian manifold with nonnegative curvature operator, positive Ricci curvature and non-vanishing Euler characteristic.
    Suppose that $(M^{n}, g)$ is an oriented closed Riemannian manifold and $f : M \rightarrow M_{0}$ is a spin map of non-zero degree.
    If the scalar curvature satisfies
    \[
    R_{g} \ge (R_{g_{0}} \circ f) \cdot \sqrt{\mathrm{area} (f)},
    \]
    then $\alpha := \mathrm{area} (f)$ is a (positive) constant and $f : (M, \alpha \cdot g) \rightarrow (M_{0}, g_{0})$ is a Riemannian covering.
    Here, $R_{g}, R_{g_{0}}$ denote the scalar curvature of $g, g_{0}$ respectively and 
    \[
    \mathrm{area}(f) : M \rightarrow [0, \infty) ;~x \mapsto \max_{v \in \Lambda^{2}T_{x}M \setminus \{0\}} \frac{f^{*}g_{0}(v,v)}{g(v,v)}.
    \]
\end{theo}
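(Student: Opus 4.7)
The plan is to prove Theorem \ref{theo-listing-1} by an index-theoretic Bochner argument in the spirit of Llarull and Goette--Semmelmann, with a sharper curvature estimate adapted to the quantity $\mathrm{area}(f)$. The spin condition on $f$ produces a twisted spinor bundle $\mathcal{S} := \Sigma M \otimes f^{*}\Sigma M_{0}$ (with a $\mathbb{Z}_{2}$-graded or ungraded version depending on the parity of $n$) carrying a canonical Dirac-type operator $D$. My first step would be to compute the index of $D$ via Atiyah--Singer: it equals $\int_{M} \hat{A}(TM)\cdot \mathrm{ch}(f^{*}\Sigma M_{0}) = \deg(f)\cdot \int_{M_{0}} \hat{A}(TM_{0})\cdot \mathrm{ch}(\Sigma M_{0})$. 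By the standard identification of this top Chern-character integrand with the Euler form of $M_{0}$ (up to sign), the index equals $\pm \deg(f)\cdot \chi(M_{0})$, which is nonzero by hypothesis. In even dimensions this produces a nontrivial harmonic section $\sigma$ of $\mathcal{S}$; in odd dimensions one passes to $M\times S^{1}$ and $M_{0}\times S^{1}$ as in Listing to reduce to the even case.

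The second step is the Lichnerowicz--Schrödinger--Bochner formula
\[
D^{2} \;=\; \nabla^{*}\nabla \;+\; \tfrac{R_{g}}{4} \;+\; \mathcal{R}^{f^{*}\Sigma M_{0}},
\]
where $\mathcal{R}^{f^{*}\Sigma M_{0}}$ is the curvature endomorphism induced by the pulled-back spin connection on $f^{*}\Sigma M_{0}$. The crucial algebraic estimate I need is the pointwise lower bound
\[
\mathcal{R}^{f^{*}\Sigma M_{0}} \;\ge\; -\,\tfrac{1}{4}\,(R_{g_{0}}\circ f)\,\sqrt{\mathrm{area}(f)}\cdot \mathrm{Id},
\]
obtained by diagonalizing the symmetric endomorphism $f^{*}g_{0}$ with respect to $g$, expanding $\mathcal{R}^{f^{*}\Sigma M_{0}}$ in an orthonormal frame adapted to this diagonalization, and invoking the non-negativity of the curvature operator of $g_{0}$. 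The factor $\sqrt{\mathrm{area}(f)}$ arises precisely because the curvature endomorphism is quadratic in $f_{*}$ restricted to $2$-planes, and $\mathrm{area}(f)$ is the sup of $(f^{*}g_{0})/g$ on $\Lambda^{2}TM$. I expect this estimate to be the main technical obstacle; the sharp constant requires a careful manipulation of the Clifford action combined with the elementary inequality $\sum \lambda_{i}\lambda_{j}\le \tfrac{1}{2}\bigl(\sum \lambda_{i}^{2}+\sum\lambda_{i}\sum\lambda_{j}\bigr)$ type bounds used by Listing.

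Combining the two previous steps with the scalar curvature hypothesis $R_{g}\ge (R_{g_{0}}\circ f)\sqrt{\mathrm{area}(f)}$ yields
\[
0 \;=\; \langle D^{2}\sigma,\sigma\rangle_{L^{2}} \;\ge\; \|\nabla\sigma\|_{L^{2}}^{2} \;+\; \tfrac{1}{4}\int_{M}\!\bigl(R_{g}-(R_{g_{0}}\circ f)\sqrt{\mathrm{area}(f)}\bigr)|\sigma|^{2}\,dv_{g},
\]
so every term vanishes: $\sigma$ is parallel, the scalar curvature inequality is an equality, and $\mathcal{R}^{f^{*}\Sigma M_{0}}\sigma = -\tfrac{1}{4}(R_{g_{0}}\circ f)\sqrt{\mathrm{area}(f)}\sigma$. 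Because $\sigma$ is nowhere zero (being parallel and nontrivial), the equality case of the algebraic estimate forces $f^{*}g_{0}$ to be, at each point, a scalar multiple of $g$; equivalently the eigenvalues of $f^{*}g_{0}$ with respect to $g$ all coincide, so $\mathrm{area}(f)=\alpha(x)$ is well-defined and $f^{*}g_{0}=\alpha\cdot g$ pointwise.

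Finally I would use the positive Ricci hypothesis to upgrade this to a global statement. Differentiating $f^{*}g_{0}=\alpha\cdot g$ and using that both $\nabla^{g}$ and $f^{*}\nabla^{g_{0}}$ are torsion-free together with the parallelism of $\sigma$, I would derive $\mathrm{Ric}_{g_{0}}(f_{*}X,f_{*}Y)=\tfrac{d\alpha(X)}{2\alpha}\,g(Y,\cdot) + \text{similar terms}$; positivity of $\mathrm{Ric}_{g_{0}}$ and the fact that $f$ has nonzero degree (hence $f_{*}$ is surjective at generic points) force $d\alpha=0$, so $\alpha$ is a positive constant. Then $f\colon(M,\alpha g)\to(M_{0},g_{0})$ is a local isometry between closed manifolds of the same dimension, hence a Riemannian covering, completing the proof.
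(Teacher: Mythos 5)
This theorem is not proved in the paper at all: it is Listing's Theorem 1, quoted verbatim from \cite{listing2010scalar} as background for the rigidity definitions, so there is no in-paper proof to compare against. Your outline does reproduce the strategy of the cited source (Llarull's method as refined by Goette--Semmelmann \cite{goette2002scalar} and Listing): twist the spinor bundle by $f^{*}\Sigma M_{0}$, compute the index as $\pm\deg(f)\,\chi(M_{0})\neq 0$ via the identity $\hat{A}(TM_{0})\,\mathrm{ch}(\Sigma^{+}M_{0}-\Sigma^{-}M_{0})=\pm e(TM_{0})$, and run the Lichnerowicz--Bochner argument against the curvature estimate $\mathcal{R}^{f^{*}\Sigma M_{0}}\ge-\tfrac{1}{4}(R_{g_{0}}\circ f)\sqrt{\mathrm{area}(f)}$; note that $\sqrt{\mathrm{area}(f)}=\lambda_{1}\lambda_{2}$ in terms of the singular values of $df$, which is why this is the right factor. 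One remark: since $\chi(M_{0})\neq 0$ forces $n$ to be even, the $M\times S^{1}$ reduction you mention is never needed for this statement; it belongs to the Kervaire semicharacteristic version, Theorem \ref{theo-listing-2}.

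As a proof, however, your sketch is thin at exactly the two decisive points. First, the operator inequality $\mathcal{R}^{f^{*}\Sigma M_{0}}\ge-\tfrac{1}{4}(R_{g_{0}}\circ f)\sqrt{\mathrm{area}(f)}$ with the sharp constant is the entire technical content of the theorem (Lemma 1.1 in \cite{goette2002scalar}); you assert it and gesture at eigenvalue manipulations, but the argument requires decomposing the nonnegative curvature operator of $g_{0}$ into nonnegative rank-one pieces so that each contributes a term bounded by $\lambda_{i}\lambda_{j}\le\lambda_{1}\lambda_{2}$, and nothing in your sketch supplies this. Second, your rigidity analysis misplaces where positive Ricci curvature enters: equality in the curvature estimate alone does \emph{not} force all $\lambda_{i}$ to coincide pointwise, since with merely nonnegative curvature operator some pairs $(i,j)$ may carry zero curvature coefficient and impose no constraint; one needs $\mathrm{Ric}_{g_{0}}>0$ already at this pointwise stage to chain the equalities $\lambda_{i}\lambda_{j}=\lambda_{1}\lambda_{2}$ over all indices and conclude $f^{*}g_{0}=\alpha\,g$ with $\alpha>0$. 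Your final formula for $d\alpha$ does not typecheck as written; a cleaner route is to observe that $f:(M,\alpha g)\to(M_{0},g_{0})$ is then a local isometry, so $R_{\alpha g}=R_{g_{0}}\circ f$, and comparing this with the conformal transformation law for $R_{\alpha g}$ together with the scalar curvature equality forces $\Delta$-terms in $\log\alpha$ to vanish after integration over the closed manifold $M$, whence $\alpha$ is constant and the local isometry between closed manifolds is a Riemannian covering. With those two repairs the argument closes as in \cite{listing2010scalar}.
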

\noindent
For example, the standard even-dimensional spheres satisfy the assumption of Theorem \ref{theo-listing-1}.
For the case of $M = M_{0}$ and $f = \mathrm{id}_{M}$,
he also gave the following type of rigidity theorem.
\begin{theo}[{\cite[Theorem 2]{listing2010scalar}}]
\label{theo-listing-2}
    Suppose $(M^{n}, g_{0})$ is an oriented spin closed Riemannian manifold of dimension $n = 4k +1~(k \in \mathbb{N})$ with nonnegative curvature operator, positive Ricci curvature and non-vanishing Kervaire semicharacteristic $\sigma(M) \neq 0$.
    If $g$ is a Riemannian metric on $M$ satisfying
    \[
    R_{g} \ge R_{g_{0}} \cdot \| g_{0} \|_{2, g},
    \]
    then there is a positive constant $c > 0$ such that $g = c \cdot g_{0}$.
    Here, $\| g_{0} \|_{2, g} = \mathrm{area}(\mathrm{id}_{M})$ is defined by (\ref{eq-2-norm}) below.
\end{theo}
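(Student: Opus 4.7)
The plan is to mimic the Dirac-operator strategy of Llarull and Goette--Semmelmann that underlies Theorem \ref{theo-listing-1}, but to replace the Euler-characteristic obstruction (which vanishes in odd dimensions) by one coming from the Kervaire semicharacteristic. Concretely, for $n=4k+1$, the Atiyah--Singer mod $2$ index theorem realises $\sigma(M)\in\mathbb{Z}/2$ as $\mathrm{ind}_{2}(D_{E})$ of a suitable real (or quaternionic) Dirac operator $D_{E}$ on the spinor bundle of $(M,g)$ twisted by an auxiliary bundle $E$ built from $(M,g_{0})$. I would use the same twisting bundle that appears in the proof of Theorem \ref{theo-listing-1}, the only modification being the passage from the $\mathbb{Z}$-valued to the $\mathbb{Z}/2$-valued index. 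The hypothesis $\sigma(M)\neq 0$ then yields a non-trivial harmonic section $\phi\in\ker D_{E}$.

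Next I would apply the twisted Lichnerowicz--Weitzenb\"ock formula
\[
D_{E}^{2}=\nabla^{*}\nabla+\tfrac{1}{4}R_{g}+\mathcal{R}^{E},
\]
where $\mathcal{R}^{E}$ is the curvature endomorphism of the twisting. The key pointwise estimate, already present in the proof of Theorem \ref{theo-listing-1} and ultimately going back to Llarull's eigenvalue trick, is
\[
\mathcal{R}^{E}\;\ge\;-\tfrac{1}{4}\,R_{g_{0}}\,\|g_{0}\|_{2,g}\cdot\mathrm{id},
\]
which uses the nonnegativity of the curvature operator of $g_{0}$ together with the definition of the area norm $\|g_{0}\|_{2,g}$, diagonalising $g_{0}$ with respect to $g$. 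Combining this with the assumed curvature inequality $R_{g}\ge R_{g_{0}}\cdot\|g_{0}\|_{2,g}$ gives
\[
0=\langle D_{E}^{2}\phi,\phi\rangle\ge\|\nabla\phi\|_{L^{2}}^{2},
\]
so $\phi$ is parallel and, at every point where $\phi\neq 0$, every pointwise inequality above is an equality.

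From the equality case I would then extract the rigidity. Equality in the curvature estimate forces the eigenvalues of $g_{0}$ relative to $g$ to coincide at each point where $\phi$ is non-zero, so $g_{0}=\lambda\cdot g$ for some positive function $\lambda$ on this open set; by parallelism of $\phi$ (which is nowhere vanishing once it vanishes at no point) the set is all of $M$. Saturation of the scalar-curvature inequality then imposes a relation between $\lambda$ and $R_{g_{0}}$, which, combined with the standard conformal change formula for the scalar curvature together with the positivity of $\mathrm{Ric}_{g_{0}}$ (here an Obata/Bochner-type argument eliminates non-constant $\lambda$), forces $\lambda$ to be constant. Setting $c=\lambda^{-1}$ yields $g=c\cdot g_{0}$.

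I expect the main obstacle to be the pointwise eigenvalue analysis in the semicharacteristic setting. In Goette--Semmelmann and in Theorem \ref{theo-listing-1} one has a $\mathbb{Z}$-valued index computation and a complex spinor bundle which makes the algebraic identification of $\mathcal{R}^{E}$ comparatively clean. For $n=4k+1$ one has to work with real/quaternionic structures and a genuinely mod $2$ index, so the construction of the twisting $E$ realising $\sigma(M)$ as $\mathrm{ind}_{2}D_{E}$, and the verification that the resulting $\mathcal{R}^{E}$ still admits the above eigenvalue estimate, is the delicate part. Once this is in place, the equality discussion proceeds along the lines of Theorem \ref{theo-listing-1}.
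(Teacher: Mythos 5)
This statement is quoted in the paper as \cite[Theorem 2]{listing2010scalar}; the paper itself gives no proof of it, so there is nothing internal to compare your argument against. Your outline does follow the strategy of the cited source and of Goette--Semmelmann: realise the Kervaire semicharacteristic as the mod $2$ index of the (real, skew-adjoint) Dirac-type operator on $S\otimes S\cong\Lambda^{*}M$ twisted via $g_{0}$, use $\sigma(M)\neq 0$ to produce a harmonic section, and combine the Lichnerowicz formula with the curvature-operator estimate $\mathcal{R}^{E}\ge-\tfrac{1}{4}R_{g_{0}}\|g_{0}\|_{2,g}$ and the hypothesis $R_{g}\ge R_{g_{0}}\|g_{0}\|_{2,g}$ to force the section to be parallel and all inequalities to be equalities; positivity of $\mathrm{Ric}_{g_{0}}$ then drives the rigidity. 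The three steps you flag as delicate (the index-theoretic identification of $\sigma(M)$, the verification of the eigenvalue estimate for $\mathcal{R}^{E}$ in the real/mod $2$ setting, and the equality analysis yielding $g=c\cdot g_{0}$ with $c$ constant) are precisely the substance of Listing's proof, so as written your text is a correct roadmap rather than a complete argument; to make it a proof you would need to supply those three steps, for which you can follow \cite{goette2002scalar} and \cite{listing2010scalar} essentially verbatim.
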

The condition that a metric has nonnegative curvature operator is preserved under the Ricci flow.
Moreover, on a closed manifold, the condition that a metric has positive Ricci curvature is also preserved under the Ricci flow for a sufficiently short time.
The Ricci flow solution $g(t)$ is homothetic (i.e. $g(t) = c(t) \phi(t)^{*} g_{0}$ where $c(t)$ is a positive constant and $\phi(t)$ is a diffeomorphism for each $t$) if and only if the initial metric $g_{0}$ is Einstein up to a diffeomorphism.
Hence, if $g_{0}$ is non-Einstein metric satisfying the assumption of the above theorem \ref{theo-listing-1} or \ref{theo-listing-2}, then it should be able to obtain a family of metrics (which is the solution of the Ricci flow equation starting at $g_{0}$) that satisfies the assumption of each theorem and is not merely a positive constant multiple of the original metric $g_{0}$.
However, at least in the author's opinion, metrics satisfying the aforementioned rigidity theorems must be special in some sense.
In light of these, we ask the following.
\begin{ques}
    Is there any non-Einstein metric $g_{0}$ that satisfies the above Listing's theorem \ref{theo-listing-1} or \ref{theo-listing-2}?
\end{ques}
On the other hand, the same quantity $R_{g} \cdot g$ appears in the context of Yamabe metrics.
\begin{theo}[\cite{shin1994examples}]\label{theo-kato}
    Let $M$ be a closed manifold and $g$ a Yamabe metric in its conformal class with $R_{g} > 0$.
    Assume that a metric $h$ on $M$ has a positive constant scalar curvature and satisfies
    \begin{equation}\label{eq-kato}
R_{g} \cdot g \ge R_{h} \cdot h~~\mathrm{on} ~M.
    \end{equation}
    Then, $h$ is also a Yamabe metric in its conformal class.
    Moreover, if the inequality in (\ref{eq-kato}) is strict, then $h$ is a unique Yamabe metric in its conformal class.
\end{theo}
\noindent
In particular, this gives rigidity in the space of positive csc metrics on $M$. More precisely, if $(M, h)$ is a positive csc metric on a closed manifold $M$ that is not a Yamabe metric, then no positive Yamabe metric $g$ satisfies (\ref{eq-kato}). 
\begin{rema}
    This type of sufficient condition is also known for other types of Yamabe metrics (\cite{hamanaka2021non, hamanaka2025notes}).
\end{rema}
And, for a conformal class $C$ on a closed manifold $M^n~(n \ge 2)$, its Yamabe constant $Y(M, C)$ is calculated by the infimum of the volume of the weighted measure ``$|R_g|\, d\mathrm{vol}_g$''.
\begin{fact}[{\cite[Lemma 1]{lebrun1999kodaira}}]
Let $C$ be a conformal class on a closed manifold $M^n$ of dimension $n \ge 2$. Then,
\[
    |Y(M, C)|^{n/2} = \inf_{g \in C} \int_M |R_g|^{n/2}\, d\mathrm{vol}_g.
    \]
\end{fact}
\noindent
Therefore, in particular, if $(M, g)$ is a Yamabe metric in its conformal class, then there is no metric $h \in [g]$ such that
\[
|R_g| \cdot g > |R_h| \cdot h~\mathrm{on}~M.
\]
\begin{rema}
Let $(M, g_{0})$ be a smooth Riemannian manifold and $\mathcal{M}$ the space of all Riemannian metrics on $M$.
Consider the functional 
\[
R_{min} : \mathcal{M} \ni g \mapsto \min_{M} R_{g} \in \mathbb{R}
\]
and a functional $\mu_{g_{0}}$ on $\mathcal{M}$, which is determined by $g_{0}$ and the scaling invariant of weight $-1$, i.e., $\mu(c \cdot g) = c^{-1} \mu(g)$ for all $c > 0$ and $g \in \mathcal{M}$.
If the metric $g_{0}$ is rigid with respect to the functional $\mu_{g_{0}}$ in a certain sense, then $\mu_{g_{0}}$ is an upper bound of $R_{min}$ as a functional on $\mathcal{M}$ and these values coincide at $g_{0}$.
On the other hand, when $M$ is closed $n$-manifold with non-positive Yamabe invariant $Y(M)$, the following functional is an upper bound of $R_{min}$ on $\mathcal{M}$:
\[
g \mapsto Y(M) \cdot \mathrm{Vol}(M, g)^{-2/n}.
\] 
Moreover, if a smooth metric $g_{0}$ attains equality, then it is a Yamabe metric (i.e., $Y(M, [g_{0}]) 
 = Y(M)$) and an Einstein metric.
However, when the Yamabe invariant is positive, this is not the case in general (see Remark \ref{rema-minscal} below or {\cite[Chapter 3]{listing2012scalar}}).
A Yamabe metric is expected to be standard in some sense (cf. {\cite[Section 1]{schoen1987variational}}), but it remains a difficult problem to know how to actually obtain it as a limit of the sequence of solutions to the Yamabe problem, and in what sense it is standard (see also Subsection \ref{subsection-singular} below).
\end{rema}
By the way, a certain infinitesimal feature of Einstein metrics relates to another type of scalar curvature rigidity.
\begin{theo}[{\cite[Theorem 1.5, Remark 1.6]{kroencke2024stability}}]
    Let $(M, g)$ be a closed Einstein manifold.
    If its is semi-stable and integrable, then it is locally scalar curvature rigid, i.e., there is no metric $\hat{g}$ $C^{\infty}$-close to $g$ such that 
    $g - \hat{g} |_{M \setminus K} \equiv 0$, $\mathrm{Vol}(K, g) = \mathrm{Vol}(M, \hat{g})$ 
    for some compact set $K \subset M$, and 
    $R_{\hat{g}} \ge R_{g}$ on $M$ and $R_{\hat{g}}(x) > R_{g}(x)$ for some $x \in M$.
    If $(M, g)$ is unstable, it is not locally scalar curvature rigid.
\end{theo}
\begin{rema}
    Dahl--Kr\"{o}ncke \cite{dahl2024local, kroencke2024stability} also found a relation  between stability of Einstein metrics and certain type of scalar curvature rigidity.
\end{rema}

As noted in \cite{goette2002scalar}, one can apply the construction of Lohkamp \cite{lohkamp1999scalar} to see that not all metrics on $M$ are area-extremal (for this definition, see the beginning of Section \ref{section-rigidity}) if $\mathrm{dim} M \ge 3$.
Similarly, for each closed manifold $M^{n}$ of dimension $n \ge 3$, there is at least one Riemannian metric that is not type III scalar curvature rigid in the sense of Listing (see Definition \ref{def-3} below).
However, such an example is not given in an explicit way.
That is, we can deduce the existence of such a metric but we cannot know any concrete properties involving its curvatures in general.
In light of the above, this paper aims to investigate relations between Listing-type of rigidity phenomena involving scalar curvature and standard metrics in various senses.
In particular, we will investigate the relations between such types of scalar curvature rigidity and some infinitesimal features for constant scalar curvature metrics.

Throughout the paper, any Riemannian metric will be smooth.
The Ricci curvature and the scalar curvature of a metric $g$ are denoted by $\mathrm{Ric}_{g}$ and $R_{g}$, respectively.
We often abbreviate ``constant scalar curvature'' as ``csc''.
And, $ \overset{\circ}{\mathrm{Ric}_{g}} := \mathrm{Ric}_{g} - (R_g / n) g$ denotes the trace-less Ricci tensor of $g$.
The (non-positive) Laplacian that acts on functions is defined by $\Delta_{g} f = \mathrm{tr}_{g} \nabla^{g} df$, where $\nabla^{g}$ is the Levi-Civita connection of $g$.
The volume element is denoted by $\mathrm{vol}_{g}$.
For two symmetric $(0, 2)$-tensors $g$ and $h$, we say $g \ge h$ on $\Lambda^{2}TM$ if $g (v, w) \ge h(v, w)$ for all $v, w \in \Lambda^{2}TM$.

It is well-known that for a fixed conformal class $C$ on a closed manifold $M^n~(n \ge 3)$, $g \in C$ is a critical point of the normalized Einstein--Hilbert functional 
\[
g \mapsto E(g) := \frac{\int_{M} R_{g}\, d\mathrm{vol}_{g}}{ \mathrm{Vol}(M,g)^{\frac{n-2}{n}}}
\]
if and only if its scalar curvature $R_g$ is constant.
When a metric $g$ on a closed manifold $M^n~(n \ge 3)$ has a constant scalar curvature $R_g$ and $\{ g_t \} \subset [g]$ is a smooth deformation of $g$ in its conformal class, the second variation of the normalized Einstein--Hilbert functional along $\{ g_t \}$ is given by (see \cite{kobayashi2013yamabe})
\[
\left. \frac{d^2}{dt^2} E(g_t) \right|_{t=0} = \frac{(n-1)(n-2)}{2n^2} \mathrm{Vol}(M,g)^{\frac{2-n}{n}} \int_{M} \varphi \left( -\Delta_{g} - \frac{R}{n-1} \right) \varphi\, d\mathrm{vol}_{g},
\]
where
\[
\varphi = \mathrm{tr}_{g} \left( \left. \frac{d}{dt} g_t \right|_{t=0} \right) - \frac{\int_{M} \mathrm{tr}_{g} \left( \left. \frac{d}{dt} g_t \right|_{t=0} \right)\, d\mathrm{vol}_{g}}{\int_{M} d\mathrm{vol}_{g}}.
\]
Therefore, when $n \ge 3$, $g \in C$ is a stable critical point of the normalized Einstein--Hilbert functional if and only if $R_g = \mathrm{const}$ and 
\[
\lambda_1 (-\Delta_g) \ge \frac{R_g}{n-1}.
\]
Note that every metric with nonpositive constant scalar curvature is a stable critical point of the normalized Einstein--Hilbert functional restricted on a fixed conformal class since $\lambda_1 (-\Delta_g) > 0$ for all metrics $g$ on a closed manifold.
Our first main result suggests a relation between unstabilty of csc metrics and Listing-types of rigidity.
\begin{theo}\label{theo-main-unstable}
    Let $M^n$ be a closed manifold of dimension $n \ge 3$.
    Assume that $g$ is a csc metric with $\frac{R_g}{n-1} \neq \lambda_1(-\Delta_g)$, where $\lambda_1(-\Delta_g)$ denotes the first eigenvalue of the Laplace operator $-\Delta_g$ acting on functions on $M$.
    Let $v$ be the first eigenfunction of $-\Delta_g$ and $N(v)$ be its nodal set of $v$, i.e. 
    \[
    N(v) := \{ x \in M \mid v(x) = 0 \}.
    \]
    Then, there is a function $u \in C^{\infty}(M \setminus N(v))$ and sufficiently small $s_0 >0$ such that
    $g_s := g -s \left( \nabla_g^2 u - \frac{\Delta_g u}{n}g \right)$ is a smooth Riemannian meric on $M \setminus N(v)$ and
    \[
    R_{g_s} > R_g \cdot \| g \|^2_{1, g_s}~\mathrm{and}~R_{g_s} > R_g \cdot \| g \|_{2, g_s}~~\mathrm{on}~M \setminus N(v)
    \]
    for all $s \in (0, s_{0}]$.
    In particular, if $g$ is unstable in the above sense, then the assertion holds.
\end{theo}
\noindent
From \cite{cheng1976eigenfunctions, hardt1989nodal}, the nodal set $N(v)$ decomposes into the disjoint union $\{ x \in N(v) \mid |\nabla v|(x) > 0 \} \sqcup \{ x \in N(v) \mid |\nabla v|(x) = 0 \}$ of smooth $(n-1)$-manifold and a closed countably $(n-2)$-rectifiable subset.
Moreover, from \cite{naber2017volume}, there is a constant $C = C(g, \lambda_1 (-\Delta_g)) > 0$ such that
\[
\mathrm{Vol}(B_r(N(v))) \le C r,
\]
where $B_r(N(v))$ denotes the $r$-neighborhood of the nodal set $N(v)$.
Thus, if we can prove a Listing-type of rigidity theorem for a metric with this type of singularities, it will provide a new sufficient condition for the metric to be a stable csc metric.
However, from the observation of Section \ref{section-proof}, every metric on a closed manifold can be deformed in the conformal direction so that the Listing-type quantity of the deformation strictly increases outside a compact set ($=$ the nodal set of the first eigenfunction of the metric).
Moreover, this deformation can be taken so that each deformed metric is Lipschitz continuous.
Hence, when considering the singular version of Listing-type rigidity theorems as mentioned above, it is necessary to restrict the consideration to deformations that are transverse to the conformal direction.
By the way, several generalizations of the Llarull-types of rigidity theorem are known for several types of singular metrics \cite{chu2024llarull, lee2022rigidity}.

However, in the smooth category, the fact that a metric $g$ is Type II scalar curvature rigid in the sense of Definition \ref{def-2} holds does not necessarily imply that $\lambda_{1}(-\Delta_g) = \frac{R_g}{n-1}$ holds.
Indeed, the Fubini-Study metric $g_{FS}$ on $\mathbb{C}P^{n}~(n \ge 3)$ satisfies the assumption of Listing's rigidity theorem {\cite[Theorem 1]{listing2010scalar}} and hence it is Type II scalar curvature rigid in the sense of Definition \ref{def-2} below.
However, it satisfies $\frac{R_{g_{FS}}}{2n-1} = \frac{4n(n+1)}{2n-1} < \lambda_1 (-\Delta_{g_{FS}}) = 4(n+1)$ since $n \ge 3$.
On the other hand, the Fubini-Study metric on $\mathbb{C}P^1$ is Type I scalar curvature rigid in the sense of Definition \ref{def-1} and satisfies $\lambda_{1}(-\Delta_{g_{FS}}) = R_{g_{FS}}$.
\begin{ques}
    Does any metric $g$ on a closed $n$-manifold $M^n$ that is Type I scalar curvature rigid in the sense of Definition \ref{def-1} satisfy $\lambda_{1}(-\Delta_{g}) = \frac{R_{g}}{n-1}$?
\end{ques}

We will show below that a metric which is not Einstein, and furthermore satisfies a certain assumption, is not a Listing-type extremal metric in a certain sense.
\begin{theo}\label{theo-1}
Let $(M^{n}, g)$ be a closed Riemannian manifold of dimension $n \ge 2$ and a smooth function $f \in C^{\infty}(M)$ satisfying 
     \begin{equation}\label{eq-assumption-1}
     \begin{split}
         & \left\langle \mathrm{Hess} f + \nabla f \otimes \nabla f,  \overset{\circ}{\mathrm{Ric}_{g}} \right\rangle_{g} - \| \overset{\circ}{\mathrm{Ric}_{g}} \|_{g}^{2}   \\
        &\qquad+ \left( 1 - \frac{1}{n} \right) \langle \nabla f, \nabla R_{g} \rangle _{g} + \left( \frac{1}{2} - \frac{1}{n} \right) \Delta_{g} R_{g} - R_{g} \cdot \max_{v \in T_{x}M,\, |v|_{g} = 1 }\overset{\circ}{\mathrm{Ric}_{g}}
     \end{split}
     \end{equation}
     is positive (resp. negative) on $M$.
    Then, there is a small constant $s_{0} > 0$ depending only on $n, M$ and $g$ such that for all $0 < s \le s_{0}$, $g_{s} = g +s\, \overset{\circ}{\mathrm{Ric}_{g}}$ (resp. $g - s\, \overset{\circ}{\mathrm{Ric}_{g}}$) is a Riemmanian metric on $M$ and that 
    \begin{equation}\label{eq-strong-rigid}
    R_{g_{s}} > R_{g} \cdot \| g \|^{2}_{1, g_{s}}
    \end{equation}
    on $M$.
    Here, $\| g \|_{1, g_{s}} : M \rightarrow [0, \infty)$ is the function on $M$ defined by
    \begin{equation}\label{eq-1-norm}
    \| g \|_{1, g_{s}}(x) := \sqrt{\max_{v \in T_{x}M \setminus \{ 0 \}} \frac{g(v, v)}{g_{s}(v, v)}}.
    \end{equation}
    In particular, $g$ is not type I scalar curvature rigid in the sense of Definitions \ref{def-1}.
\end{theo}
\begin{rema}
    In particular, if $(M, g)$ has a positive constant scalar curvature and satisfies
    \begin{equation}\label{eq-uniform}
    \| \overset{\circ}{\mathrm{Ric}_{g}} \|_{g}(x) \neq 0~~\mathrm{for~all}~x \in M,
    \end{equation}
    then $(M, g)$ satisfies the assumption in the first line of (\ref{eq-assumption-1}).
\end{rema}
Similarly to Theorem \ref{theo-1}, we can also prove the following.
\begin{theo}\label{theo-2}
Let $(M^{n}, g)$ be a closed Riemannian manifold of dimension $n \ge 2$ and a smooth function $f \in C^{\infty}(M)$ satisfying 
     \begin{equation*}
     \begin{split}
         & \left\langle \mathrm{Hess} f + \nabla f \otimes \nabla f,  \overset{\circ}{\mathrm{Ric}_{g}} \right\rangle_{g} - \| \overset{\circ}{\mathrm{Ric}_{g}} \|_{g}^{2}   \\
        &\qquad+ \left( 1 - \frac{1}{n} \right) \langle \nabla f, \nabla R_{g} \rangle _{g} + \left( \frac{1}{2} - \frac{1}{n} \right) \Delta_{g} R_{g} - \frac{R_{g}}{2} \cdot \max_{v \in \Lambda^{2}T_{x}M \setminus \{ 0 \} } g^{-1} \overset{\circ}{\mathrm{Ric}_{g}}(v,v),
     \end{split}
     \end{equation*}
     where for $(x, v) = (x, v_{1} \wedge v_{2}) \in \Lambda^{2} TM$,
     \[
     \begin{split}
     g^{-1} h(v,v) &= \mathrm{tr} \left(
     \begin{pmatrix}
         g(v_{1}, v_{1}) & g(v_{1}, v_{2}) \\
         g(v_{2}, v_{1}) & g(v_{2}, v_{2})
     \end{pmatrix}^{-1} \cdot
     \begin{pmatrix}
         h(v_{1}, v_{1}) & h(v_{1}, v_{2}) \\
         h(v_{2}, v_{1}) & h(v_{2}, v_{2})
     \end{pmatrix} \right) \\
     &= \frac{1}{g(v,v)} \left( g(v_{2}, v_{2}) h(v_{1}, v_{1}) - 2g(v_{1}, v_{2}) h(v_{1}, v_{2}) + g(v_{1}, v_{1}) h(v_{2}, v_{2}) \right),
     \end{split}
     \]
     is positive (resp. negative) on $M$.
    Then, there is a small constant $s_{0} > 0$ depending only on $n, M$ and $g$ such that for all $0 < s \le s_{0}$, $g_{s} = g +s\, \overset{\circ}{\mathrm{Ric}_{g}}$ (resp. $g - s\, \overset{\circ}{\mathrm{Ric}_{g}}$) is a Riemmanian metric on $M$ and that 
     \begin{equation}\label{eq-rigid-2}
     R_{g_{s}} > R_{g} \cdot \| g \|_{2, g_{s}}
    \end{equation}
    on $M$.
    Here, $\| g \|_{2, g_{s}} : M \rightarrow [0, \infty)$ is the function on $M$ defined by
    \begin{equation}\label{eq-2-norm}
    \| g \|_{2, g_{s}}(x) := \sqrt{\max_{v \in \Lambda^{2} T_{x}M \setminus \{ 0 \}} \frac{g(v, v)}{g_{s}(v, v)}}.
    \end{equation}
    In particular, $g$ is not type II scalar curvature rigid in the sense of Definitions \ref{def-2}.
\end{theo}
\begin{theo}\label{theo-3}
     Let $(M^{n}, g)$ be a closed Riemannian manifold of dimension $n \ge 2$ and a smooth function $f \in C^{\infty}(M)$ satisfying 
     \begin{equation*}
     \begin{split}
         &\left\{ \left\langle \mathrm{Hess} f + \nabla f \otimes \nabla f,  \overset{\circ}{\mathrm{Ric}_{g}} \right\rangle_{g} - \| \overset{\circ}{\mathrm{Ric}_{g}} \|_{g}^{2} \right.  \\
        &\qquad\qquad\left. + \left( 1 - \frac{1}{n} \right) \langle \nabla f, \nabla R_{g} \rangle _{g} + \left( \frac{1}{2} - \frac{1}{n} \right) \Delta_{g} R_{g} \right\} g + R_{g} \cdot \overset{\circ}{\mathrm{Ric}_{g}}
     \end{split}
     \end{equation*}
     is positive (resp. negative) definite on $TM$.
    Then, there is a small constant $s_{0} > 0$ depending only on $n, M$ and $g$ such that for all $0 < s \le s_{0}$, $g_{s} = g +s\, \overset{\circ}{\mathrm{Ric}_{g}}$ (resp. $g - s\, \overset{\circ}{\mathrm{Ric}_{g}}$) is a Riemmanian metric on $M$ and that 
    \begin{equation}\label{eq-strong-rigid}
     R_{g_{s}} \cdot g_{s} > R_{g} \cdot g
    \end{equation}
    on $TM$.
    In particular, $g$ is not type III scalar curvature rigid in the sense of Definitions \ref{def-3}.
\end{theo}
\begin{rema}
    In particular, if a metric $g$ satisfies $R_g$ is a positive constant,  $\mathrm{Ric}_{g} > 0$ on $M$ and there is a function $u \in C^{\infty}_{+}(M)$ such that $\left\langle \mathrm{Hess}\, u, \mathrm{Ric}_{g} - \frac{R_{g}}{n} g \right\rangle - \| \mathrm{Ric}_{g} \|_{g}^{2}\, u > 0$ on $M$, then $g$ satisfies the above assumption.
\end{rema}
\begin{theo}\label{theo-4}
    Let $(M^{n}, g)$ be a closed Riemannian manifold of dimension $n \ge 2$ and a smooth function $f \in C^{\infty}(M)$ satisfying 
     \begin{equation*}
     \begin{split}
         & R_{g} \left\{\left\langle \mathrm{Hess} f + \nabla f \otimes \nabla f,  \overset{\circ}{\mathrm{Ric}_{g}} \right\rangle_{g} - \| \overset{\circ}{\mathrm{Ric}_{g}} \|_{g}^{2} \right.  \\
        &\qquad\left.+ \left( 1 - \frac{1}{n} \right) \langle \nabla f, \nabla R_{g} \rangle _{g} + \left( \frac{1}{2} - \frac{1}{n} \right) \Delta_{g} R_{g} \right\} g + \frac{R_{g}^{2}}{2} \cdot g^{-1} \overset{\circ}{\mathrm{Ric}_{g}}
     \end{split}
     \end{equation*}
     is positive (resp. negative) definite on $\Lambda^{2} TM$.
    Then, there is a small constant $s_{0} > 0$ depending only on $n, M$ and $g$ such that for all $0 < s \le s_{0}$, $g_{s} = g +s\, \overset{\circ}{\mathrm{Ric}_{g}}$ (resp. $g_{s} = g - s\, \overset{\circ}{\mathrm{Ric}_{g}}$) is a Riemmanian metric on $M$ and that 
    \begin{equation}\label{eq-strong-rigid-2}
     R^{2}_{g_{s}} \cdot g_{s} > R^{2}_{g} \cdot g
    \end{equation}
   on $\Lambda^{2} TM$.
   In particular, $g$ is not type IV scalar curvature rigid in the sense of Definitions \ref{def-4}.
\end{theo}
\begin{rema}\label{rema-assumption}
\begin{itemize}
    \item All the assumptions of the above Theorems \ref{theo-1}, \ref{theo-2}, \ref{theo-3} and \ref{theo-4} especially imply that $R_{g}$ is not a constant on $M$ or
    \[
    \| \overset{\circ}{\mathrm{Ric}_{g}} \|_{g} \neq 0~~\mathrm{on}~M.
    \]
    \item Since every Einstein metric does not satisfy any of the above assumptions (see Remark \ref{rema-assumption} above), our theorems above cannot be applied to Einstein metrics.
    \end{itemize}
\end{rema}

Finally, we can also deform a non-Einstein unique positive Yamabe metric in its conformal class towards csc metrics for which Listing-types of rigidity do not hold provided that the traceless Ricci tensor of the metric satisfies a certain assumption.
\begin{theo}\label{theo-cscdeformation}
    Let $(M^n, g)~(n \ge 3)$ be a closed Riemannian manifold.
    Assume that $g$ is a unit volume unique nonnegative Yamabe metric in its conformal class $[g]$, which is not conformally equivalent to the standard sphere.
    Assume that $g$ is not an Einstein metric.
    When $R_g > 0$, assume moreover that
    \begin{equation}\label{eq-tracelessricci-assumption}
    \min_{v \in T_x M,~|v|_g = 1} \mathrm{pr}_{T_{g}\mathcal{C}_{1}} \overset{\circ}{\mathrm{Ric}}_g(v,v) \le 0~~\mathrm{on}~M.
    \end{equation}
    \begin{equation}\label{eq-tracelessricci-assumption2}
   \left( \mathrm{resp.}~ \min_{v \in \Lambda^2 T_x M,~|v|_g = 1} \mathrm{pr}_{T_{g}\mathcal{C}_{1}} \overset{\circ}{\mathrm{Ric}}_g(v,v) \le 0~~\mathrm{on}~M. \right)
    \end{equation}
    Then, there is a sufficiently small constant $s_0 > 0$ and a smooth deformation of csc metrics $\{ \gamma_t \}_{t \in [0, s_0]} \subset \mathcal{C}_1$ such that on $M$,
    \[
    R_{\gamma_t} > R_g \cdot \| g \|_{1, \gamma_t}^2~~\left( \mathrm{resp.}~~R_{\gamma_t} > R_{g} \cdot \| g \|_{2, \gamma_t} \right)
    \]
    for all $s \in (0, s_{0}]$.
    Here, $\mathcal{C}_1$ denotes the space of all csc metrics with unit volume and $\mathrm{pr}_{T_g \mathcal{C}_1}$ denotes the projection onto the tangent space $T_g \mathcal{C}_1$ of $\mathcal{C}_1$ at $g$.
    In particular, $g$ is neither type I nor type II scalar curvature rigid in the sense of Definitions \ref{def-1} and \ref{def-2}. 
\end{theo}
\begin{rema}
\begin{itemize}
\item[(1)] If $\mathrm{tr}_{g} \left( \mathrm{pr}_{T_g \mathcal{C}_1} (\overset{\circ}{\mathrm{Ric}_g}) \right) = 0$ on $M$, then the assumption (\ref{eq-tracelessricci-assumption}) is satisfied. 
In particular, if $\mathrm{pr}_{T_{g} \mathcal{C}_1} (\overset{\circ}{\mathrm{Ric}_g}) = \overset{\circ}{\mathrm{Ric}_g}$, then (\ref{eq-tracelessricci-assumption}) is satisfied.
   \item[(2)] To be precise, the tangent space $T_{g} \mathcal{C}_1$ is 
    \[
T_{g} \mathcal{C}_1 = \mathrm{Ker}\, \alpha_g \cap \left\{ h \in T_{g}\mathcal{M} \left|~\int_M \mathrm{tr}_g h\, d\mathrm{vol}_g = 0 \right. \right\},
    \]
    where $T_{g}\mathcal{M}$ is the tangent space of the space of all Riemannian metrics $\mathcal{M}$ at $g$ and
    \[
    \alpha_g (h) = \Delta_g (\Delta_g \mathrm{tr}_g h + \delta_g (\delta_g h) - \langle \mathrm{Ric}_g, h \rangle).
    \]
    Since $\overset{\circ}{\mathrm{Ric}_g}$ is traceless and $R_g$ is constant, for a csc metric $g$ with unit volume, $\mathrm{pr}_{T_{g} \mathcal{C}_1} (\overset{\circ}{\mathrm{Ric}_g}) = \overset{\circ}{\mathrm{Ric}_g}$ if and only if $\Delta_{g} \| \overset{\circ}{\mathrm{Ric}_g} \|^2 = 0$ on $M$.
    In particular, if $M$ is closed connected, it is equivalent to that $\| \overset{\circ}{\mathrm{Ric}_g} \| = \mathrm{const}$ on $M$.
    Several examples satisfying this condition are given in \ref{subsection-lie}, \ref{subsection-submersions} and \ref{subsection-unstable} below.
    \item[(3)] For each sufficiently small $t$, $\gamma_t$ constructed above is also the unique csc metric in its conformal class (up to rescaling).
    From Kato's theorem (Theorem \ref{theo-kato}), the unique-csc assumption of this theorem is natural. 
    \end{itemize}
\end{rema}

This paper is organized as follows.
In Section \ref{section-rigidity}, we give several types of definition of scalar curvature rigidity and review some results for certain Riemannian metrics with positive scalar curvature and a relation between ``scalar minimum functional'' and an extremal metric (see Remark \ref{rema-minscal}).
In Section \ref{section-preliminaries}, we describe a formula that is necessary to prove our main theorems.
Furthermore, we consider statements of the same type as our main theorems on compact manifolds with boundary and non-compact complete manifolds.
In Section \ref{section-proof}, we prove our main theorems.
In Section \ref{section-examples}, through several examples, we examine various metrics that are not scalar curvature-rigid in our sense.
In Section \ref{section-conclusion}, we present some further questions related to our main theorems.
In Section \ref{section-appendix} (Appendix), we give a proof of the formula in Section \ref{section-preliminaries}.

\section{Previous rigidity results for metrics with positive scalar curvature}\label{section-rigidity}
A metric $g$ on a smooth manifold $M$ is called \textit{(globally) area-extremal} if, for a metric $h$ satisfying $h \ge g$ on $\Lambda^{2}TM$, $R_{h} \ge R_{g}$ holds only when $R_{h} = R_{g}$ on $M$.
As a generalization of Llarull's significant rigidity result \cite{llarull1998sharp}, Goette and Semmelmann \cite{goette2002scalar} gave a sufficient condition for a metric to be locally area-extremal as follows.
\begin{prop}[{\cite[Lemma 0.3]{goette2002scalar}}]
   Let $(M, g)$ be a compact Riemannian manifold whose Ricci curvature $\mathrm{Ric}_{g}$ is positive definite on $M$.
   Then there exists no nonconstant $C^{1}$-path $(g_{t})_{t \in [0, \varepsilon]}$ of Riemannian metrics on $M$ for $\varepsilon > 0$ with $g_{0} = g$, such that $g_{t} \ge g$ on $TM$ and $R_{g_{t}} \ge R_{g_{0}}$ on $M$.

   Suppose moreover that $2\, \mathrm{Ric}_{g} - R_{g} \cdot g$ is negative definite on $M$.
   Then there is no nonconstant path $(g_{t})_{t \in [0, \varepsilon]}$ as above, such that $g_{t} \ge g$ on $\Lambda^{2}TM$ and $R_{g_{t}} \ge R_{g}$ on $M$.
\end{prop}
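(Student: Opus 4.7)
The plan is to reduce both assertions to an infinitesimal argument at $t = 0$: differentiate the two hypotheses, apply the classical first-variation formula for the scalar curvature, and close with a pointwise algebraic estimate on the initial velocity $h := \left.\tfrac{d}{dt}\right|_{t=0} g_{t}$.

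First, since $g_{t} \ge g = g_{0}$ on $TM$ (respectively on $\Lambda^{2}TM$) for $t \ge 0$, the one-sided derivative at $t = 0$ inherits the same non-negativity: $h \ge 0$ on $TM$ (resp.\ on $\Lambda^{2}TM$); similarly $R_{g_{t}} \ge R_{g_{0}}$ forces $\dot R_{0} \ge 0$ pointwise on $M$. The standard variation formula reads
\[
\dot R_{0} \;=\; -\Delta_{g}(\mathrm{tr}_{g} h) + \mathrm{div}_{g}\mathrm{div}_{g}\, h - \langle h, \mathrm{Ric}_{g}\rangle_{g},
\]
so integration over the closed manifold $M$ kills the two divergence-type terms and yields
\[
0 \;\le\; \int_{M} \dot R_{0}\, d\mathrm{vol}_{g} \;=\; -\int_{M} \langle h, \mathrm{Ric}_{g}\rangle_{g}\, d\mathrm{vol}_{g}.
\]

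The only substantial step is now the pointwise inequality $\langle h, \mathrm{Ric}_{g}\rangle_{g} \ge 0$, with strict inequality wherever $h \neq 0$. For the first statement, where $h$ is positive semidefinite on $TM$ and $\mathrm{Ric}_{g} > 0$, this is immediate by diagonalizing $\mathrm{Ric}_{g}$ in an orthonormal basis. For the second statement I would instead diagonalize $h$, with ordered eigenvalues $h_{1} \le \cdots \le h_{n}$ in an orthonormal frame $\{e_{i}\}$, and set $R_{ii} := \mathrm{Ric}_{g}(e_{i}, e_{i})$; the hypothesis $h \ge 0$ on $\Lambda^{2}TM$ then becomes $h_{i} + h_{j} \ge 0$ for $i \neq j$, i.e.\ $h_{1} + h_{2} \ge 0$. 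When $h_{1} \ge 0$ all $h_{i} \ge 0$ and the conclusion is immediate from $\mathrm{Ric}_{g} > 0$; when $h_{1} < 0$ one has $h_{i} \ge -h_{1}$ for every $i \ge 2$, so, using $R_{ii} > 0$,
\[
\sum_{i} h_{i} R_{ii} \;\ge\; h_{1}R_{11} + (-h_{1})\sum_{i \ge 2} R_{ii} \;=\; h_{1}\bigl(2R_{11} - R_{g}\bigr) \;>\; 0,
\]
the last strict inequality coming from the additional hypothesis $2\,\mathrm{Ric}_{g} - R_{g}\, g < 0$, which makes both factors negative.

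Combining the pointwise estimate with the integral inequality above forces $\langle h, \mathrm{Ric}_{g}\rangle_{g} \equiv 0$, and hence $h \equiv 0$, contradicting the nonconstancy of $(g_{t})$ at first order. The main obstacle is precisely the algebraic manipulation just sketched in the $\Lambda^{2}TM$ case: the bound $2R_{11} - R_{g} < 0$ has to be applied after diagonalizing $h$ (not $\mathrm{Ric}_{g}$) in order to couple the pair-sum condition $h_{1}+h_{2} \ge 0$ to the Ricci trace identity $\sum_{i} R_{ii} = R_{g}$; everything else is a one-line application of the first variation and integration by parts on the closed manifold $M$.
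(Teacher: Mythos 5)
The paper does not actually prove this proposition: it is quoted verbatim from Goette--Semmelmann \cite[Lemma 0.3]{goette2002scalar}, so there is no in-paper argument to compare against. Your proof is the standard variational argument behind that lemma, and its substantive core is correct: the first-variation formula (the same one recorded in Section 3 of the paper), the observation that the divergence terms integrate to zero against $d\mathrm{vol}_{g}$ on the closed manifold, and, crucially, the algebra in the $\Lambda^{2}TM$ case. Diagonalizing $h$ rather than $\mathrm{Ric}_{g}$, translating $h\ge 0$ on $\Lambda^{2}TM$ into $h_{1}+h_{2}\ge 0$, and then estimating $\sum_{i}h_{i}R_{ii}\ge h_{1}(2R_{11}-R_{g})>0$ when $h_{1}<0$ is exactly the point where the hypothesis $2\,\mathrm{Ric}_{g}-R_{g}\,g<0$ enters; this is handled correctly.

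There is, however, one genuine gap at the very end: you conclude that $h=\dot g_{0}\equiv 0$ ``contradicts the nonconstancy of $(g_{t})$ at first order.'' Vanishing of the velocity at $t=0$ does not contradict nonconstancy of a $C^{1}$ path --- consider $g_{t}=g_{0}+t^{2}k$ with $k\ge 0$, $k\neq 0$, which satisfies $g_{t}\ge g_{0}$ and has $\dot g_{0}=0$. Moreover, the hypotheses compare $g_{t}$ only with $g_{0}$ (not with $g_{s}$ for $0<s<t$), so the infinitesimal argument cannot simply be rerun at a later time $t$. To close this you need an additional step, e.g.\ pass to $\tau=\inf\{t: g_{t}\neq g_{0}\}$ (where $g_{\tau}=g_{0}$ by continuity and openness of the complement) and apply your argument to the one-sided difference quotients there, or work directly with the finite difference $k=g_{t}-g_{0}$; note for the second statement that the full condition $g_{t}\ge g_{0}$ on $\Lambda^{2}TM$ still yields $k_{i}+k_{j}\ge 0$ for the eigenvalues of $k$, since $\lambda_{i}\lambda_{j}\ge 1$ implies $\lambda_{i}+\lambda_{j}\ge 2$ by AM--GM. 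As written, the argument only excludes paths with $\dot g_{0}\neq 0$, i.e.\ it proves infinitesimal rather than local rigidity; you should either strengthen the conclusion step along the lines above or state explicitly that this is what is being established.
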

Meanwhile, they also gave the following stability result.
\begin{theo}[{\cite[Theorem 2.4]{goette2002scalar}}]
     Let $(M_{0}^{n}, g_{0})~(n \ge 3)$ be an oriented closed Riemannian manifold with nonnegative curvature operator, positive Ricci curvature and non-vanishing Euler characteristic.
    Suppose that $(M^{n}, g)$ is an oriented closed Riemannian manifold and $f : M \rightarrow M_{0}$ is a spin map of non-vanishing $\hat{A}$-degree $\mathrm{deg}_{\hat{A}}(f) \neq 0$ and $\mathrm{area}(f) \le 1$.
    Then $R_{g} \ge R_{g_{0}} \circ f$ implies that $R_{g} = R_{g_{0}} \circ f$.
    If moreover, $\mathrm{Ric}_{g} > 0$ and $2\, \mathrm{Ric}_{g} - R_{g} \cdot g < 0$ on $M$, then $f : M \rightarrow M_{0}$ is a Riemannian submersion.
\end{theo}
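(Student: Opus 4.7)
My plan is to adapt the twisted Dirac operator argument of Llarull, as extended by Goette--Semmelmann, to this relative setting. I would work with a twisting bundle $E \to M_{0}$ whose spinor/exterior structure detects $\chi(M_{0})$ (concretely, the Euler-class summand inside $\mathbb{S}(M_{0}) \otimes \mathbb{S}(M_{0})$ or $\Lambda^{*}T^{*}M_{0}$), and use the spin condition on $f$ to transport the spin structure so that $\mathbb{S}M \otimes f^{*}E$ makes sense. The associated twisted Dirac operator $D^{f^{*}E}$ has index given by Atiyah--Singer as a nonzero multiple of $\mathrm{deg}_{\hat{A}}(f)\cdot\chi(M_{0})$; by assumption this is nonzero, so $\ker D^{f^{*}E} \neq 0$.

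The second step is the Lichnerowicz--Weitzenb\"ock formula
\[
(D^{f^{*}E})^{2} = \nabla^{*}\nabla + \tfrac{1}{4}R_{g} + \mathcal{R}^{f^{*}E},
\]
in which $\mathcal{R}^{f^{*}E}$ is a contraction of the pulled-back curvature $f^{*}R^{g_{0}}$. The technical heart of the argument is the pointwise bound on the distinguished Euler-class direction:
\[
\mathcal{R}^{f^{*}E}\phi_{0} \ge -\tfrac{1}{4}(R_{g_{0}}\circ f)\,\mathrm{area}(f)\,\phi_{0}.
\]
This is where the nonnegativity of the curvature operator of $g_{0}$ is used (to diagonalize favorably the contributions of $R^{g_{0}}$), together with positive $\mathrm{Ric}_{g_{0}}$ (to make the estimate strict off the extremal eigendirection). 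Applying the Weitzenb\"ock identity to a harmonic section $\phi \in \ker D^{f^{*}E}$ and integrating yields
\[
0 = \int_{M} |\nabla\phi|^{2}\,d\mathrm{vol}_{g} + \int_{M} \bigl(\tfrac{1}{4}R_{g}|\phi|^{2} + \langle \mathcal{R}^{f^{*}E}\phi,\phi\rangle\bigr)\,d\mathrm{vol}_{g} \ge \int_{M} \tfrac{1}{4}\bigl(R_{g} - (R_{g_{0}}\circ f)\,\mathrm{area}(f)\bigr)|\phi|^{2}\,d\mathrm{vol}_{g}.
\]
The hypotheses $R_{g} \ge R_{g_{0}}\circ f$ and $\mathrm{area}(f) \le 1$ make the last integrand pointwise nonnegative, so every term must vanish: $\phi$ is parallel, and $R_{g} = R_{g_{0}}\circ f$ wherever $\phi \neq 0$. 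Since a parallel section of this bundle that vanishes at one point vanishes identically, $\phi$ is nowhere zero, so the scalar-curvature equality holds on all of $M$.

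For the Riemannian submersion statement, equality in the Weitzenb\"ock estimate additionally forces $\phi$ to saturate the curvature bound at every point. Algebraically, this pins the eigenvalues of $f^{*}g_{0}$ relative to $g$: the extremal eigenvalue must equal $\mathrm{area}(f)$ and the remaining eigenvalues cannot fluctuate freely. Under the strict hypotheses $\mathrm{Ric}_{g} > 0$ and $2\,\mathrm{Ric}_{g} - R_{g}\cdot g < 0$ these saturation conditions imply that $df$ has constant rank $n$, that $\ker(f^{*}g_{0})$ is trivial where the eigenvalues saturate, and that $df$ is an isometry on the appropriate horizontal complement, i.e.\ $f$ is a Riemannian submersion.

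I expect the main obstacle to be precisely this final step: translating the algebraic saturation of the curvature inequality into a geometric submersion structure. The issue is that $\mathcal{R}^{f^{*}E}$ involves a symmetric trace of $f^{*}R^{g_{0}}$ in which the individual eigenvalues of $df^{*}df$ are partially averaged, so recovering their precise values requires both Ricci inequalities simultaneously: positive $\mathrm{Ric}_{g}$ controls the non-degeneracy of $df$, while $2\,\mathrm{Ric}_{g} - R_{g}\, g < 0$ excludes intermediate eigenvalues of $f^{*}g_{0}$ that would otherwise be compatible with the saturation. Executing this carefully at a pointwise linear-algebraic level is where the subtlety of the argument lies.
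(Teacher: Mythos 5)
This theorem is quoted in Section 2 of the paper as background from Goette--Semmelmann and is not proved there, so there is no in-paper argument to compare against; the relevant benchmark is the original proof of \cite[Theorem 2.4]{goette2002scalar}. Your outline reproduces that proof's architecture faithfully: twist the Dirac operator of $M$ by the pullback of a bundle over $M_{0}$ whose Chern character carries the Euler class (in Goette--Semmelmann it is $f^{*}\mathbb{S}M_{0}$), so that the index is essentially $\deg_{\hat{A}}(f)\cdot\chi(M_{0})\neq 0$; apply the Lichnerowicz formula to a harmonic section; invoke the curvature estimate $\mathcal{R}^{f^{*}E}\ge -\tfrac{1}{4}(R_{g_{0}}\circ f)\,\mathrm{area}(f)$, which is exactly where the nonnegative curvature operator enters; and conclude from $R_{g}\ge R_{g_{0}}\circ f$ and $\mathrm{area}(f)\le 1$ that the harmonic section is parallel, hence nowhere vanishing, hence $R_{g}=R_{g_{0}}\circ f$. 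This is the right strategy and the logic of the integration step is sound.

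That said, as a proof it is a skeleton rather than an argument, and two of the load-bearing steps are only asserted. First, the curvature estimate must be an operator inequality valid for the \emph{whole} twisted bundle (or at least for a subbundle that provably contains a harmonic section); your phrasing of it as a bound ``on the distinguished Euler-class direction $\phi_{0}$'' is not enough, because the index argument gives you no control over which direction the harmonic section points. Establishing the operator bound from the nonnegativity of the curvature operator is the technical core of Goette--Semmelmann's paper and is not carried out here. Second, you correctly identify the submersion step as the main obstacle but do not execute it: one must show that equality in the Weitzenb\"ock estimate, combined with $\mathrm{Ric}>0$ and $2\,\mathrm{Ric}-R\cdot g<0$, pins down all eigenvalues of $f^{*}g_{0}$ relative to $g$ to be $0$ or $1$ with exactly $n$ eigenvalues equal to $1$; your paragraph describes what should happen but gives no mechanism. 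Finally, there is a parity issue you gloss over (the Euler-class index argument as stated lives in even dimensions; odd dimensions require a separate device), and you should be careful that the auxiliary Ricci hypotheses in the quoted statement are conditions used in the rigidity analysis, not in the index or estimate steps. None of these is a wrong turn, but each would have to be filled in before this counts as a proof.
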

They also prove area-extremality and rigidity for a certain class of metrics with nonnegative curvature operator on $\Lambda^{2}TM$.
\begin{theo}[\cite{goette2002scalar}]
    Let $(M, g)$ be a compact connected oriented Riemanniam manifold with nonnegatuve curvature operator on $\Lambda^{2}TM$, such that the universal covering of $M$ is homeomorphic to a symmetric space $G/K$ of compact type with $\mathrm{rk}G \le \mathrm{rk}K + 1$.
    Then $g$ is (globally) area-extremal. 
    If moreover, $\mathrm{Ric}_{g} > 0$ and $2\, \mathrm{Ric}_{g} - R_{g} \cdot g < 0$ on $M$, then $R_{h} \ge R_{g}$ and $h \ge g$ on $\Lambda^{2}TM$ implies $h = g$.
\end{theo}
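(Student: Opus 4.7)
The plan is to adapt the twisted Dirac operator / Lichnerowicz method of Goette--Semmelmann: reduce global area-extremality to an index-theoretic estimate, then invoke the local area-extremality proposition above for the rigidity portion. The first ingredient is the construction of a twisted vector bundle $E \to M$ with a natural metric connection, associated to an appropriate representation of $K$ and defined via the topological identification of the universal cover of $M$ with $G/K$. The rank hypothesis $\mathrm{rk}\, G \le \mathrm{rk}\, K + 1$ is exactly what is needed to guarantee that the twisted Dirac operator $D_g$ acting on $\mathcal{S}_g \otimes E$ (with $\mathcal{S}_g$ the spinor bundle of $g$) has non-vanishing index; this input is purely topological and computable via Atiyah--Singer from characteristic numbers of $M$ and $E$.

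The analytic core of the argument combines the Lichnerowicz formula
\begin{equation*}
D_h^{2} = \nabla^{*}\nabla + \frac{R_h}{4} + \mathcal{R}^{E}_h,
\end{equation*}
valid for any Riemannian metric $h$ on $M$, with the sharp pointwise bound $\mathcal{R}^{E}_g \ge -R_g/4$ derived from the non-negativity of the curvature operator on $\Lambda^{2}TM$ coupled with the representation-theoretic structure of $E$. When $h \ge g$ on $\Lambda^{2}TM$, identifying the spinor bundles along $\mathrm{id}\colon (M,h) \to (M,g)$ (area non-increasing in the relevant sense) transports the estimate to $\mathcal{R}^{E}_h \ge -R_g/4$, and the hypothesis $R_h \ge R_g$ then forces $D_h^{2} \ge 0$. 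The non-vanishing of the index produces a harmonic section of $D_h$, and equality in every pointwise inequality yields $R_h = R_g$ on $M$, establishing global area-extremality.

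For the rigidity statement, the harmonic spinors obtained above satisfy additional parallel-transport-type equations rigidifying $h$ against $g$. Assuming $h \ne g$, the linear interpolation $g_t = g + t(h-g)$ would produce a nonconstant $C^{1}$-path with $g_t \ge g$ on $\Lambda^{2}TM$ and $R_{g_t} \ge R_g$, contradicting the second half of the local area-extremality proposition above under the strict inequalities $\mathrm{Ric}_g > 0$ and $2\,\mathrm{Ric}_g - R_g \cdot g < 0$; hence $h = g$. The main obstacle is the sharp curvature estimate $\mathcal{R}^{E}_g \ge -R_g/4$: verifying it under only the rank hypothesis $\mathrm{rk}\, G \le \mathrm{rk}\, K + 1$ requires a delicate interplay between the non-negativity of the curvature operator and the algebra of the twisting representation, and constitutes the technical heart of the proof.
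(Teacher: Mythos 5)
First, a point of reference: the paper does not prove this statement at all --- it is quoted verbatim from Goette--Semmelmann \cite{goette2002scalar} as background in Section \ref{section-rigidity} --- so your proposal can only be measured against the proof in that reference. For the area-extremality half, your outline does match their strategy: a twisted Dirac operator on $\mathcal{S}_g\otimes E$, the Lichnerowicz--Weitzenb\"ock formula, the curvature estimate $\mathcal{R}^E\ge -R_g/4$ coming from the nonnegative curvature operator, and a nonzero index forcing a harmonic (in fact parallel) section and hence $R_h=R_g$. You correctly flag the curvature estimate as the technical heart; the other point you gloss over is that the index input splits into two cases (the Euler characteristic when $\mathrm{rk}\,G=\mathrm{rk}\,K$, and the Kervaire semicharacteristic via a mod $2$ index when $\mathrm{rk}\,G=\mathrm{rk}\,K+1$), and that $M$ is not assumed spin, so one must check that the twisted bundle is globally defined even when $\mathcal{S}_g$ and $E$ separately are not.

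The rigidity half of your argument has a genuine gap. To invoke the second half of the local area-extremality proposition you need a nonconstant $C^1$-path $(g_t)$ with $g_t\ge g$ on $\Lambda^2TM$ \emph{and} $R_{g_t}\ge R_g$ \emph{for every} $t$ in the interval. The linear interpolation $g_t=g+t(h-g)$ does satisfy $g_t\ge g$ on $\Lambda^2TM$ (a short eigenvalue computation), but you have no control on $R_{g_t}$ for $0<t<1$: the hypotheses only give $R_h\ge R_g$ at the endpoint, and scalar curvature is neither monotone nor convex along linear paths of metrics, so the intermediate inequality can fail and the contradiction never materializes. Goette--Semmelmann do not argue this way: rigidity comes from the equality analysis of the pointwise estimates satisfied by the parallel twisted spinor produced in the first half. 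Writing the eigenvalues of $h$ relative to $g$ as $\mu_1,\dots,\mu_n$ (so $\mu_i\mu_j\ge 1$), equality in the refined bound on $\mathcal{R}^E_h$ together with $\mathrm{Ric}_g>0$ and $2\,\mathrm{Ric}_g-R_g\cdot g<0$ forces $\mu_i=1$ for all $i$ at every point, i.e.\ $h=g$ directly, with no path argument. You need either to reproduce that equality analysis or to supply a path along which the scalar curvature inequality is actually known to hold; as written, the step fails.
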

\noindent
Later Listing generalized these to Theorem \ref{theo-listing-1} and \ref{theo-listing-2} above.
On the other hand, Lott \cite{lott2021index} extended results of Llarull and Goette--Semmelmann to manifolds with boundary.

According to Listing's work \cite{listing2010scalar}, we define four types of rigidity of metrics involving scalar curvature.
\begin{defi}\label{def-1}
    Let $M$ be a smooth manifold.
    A metric $g_{0}$ on $M$ is \textit{type I scalar curvature rigid in the sense of Listing} if
    for any metric $g$ on $M$,
    \begin{equation}\label{eq-def-1}
    R_{g} \ge R_{g_{0}} \cdot \| g_{0} \|^{2}_{1, g}
    \end{equation}
    implies that $g = c \cdot g_{0}$ for some positive constant $c > 0$.
    Here, $\| g \|_{1, g_{s}}$ is the function defined in (\ref{eq-1-norm}).
\end{defi}

\begin{defi}\label{def-2}
    Let $M$ be a smooth manifold.
    A metric $g_{0}$ on $M$ is \textit{type II scalar curvature rigid in the sense of Listing} if
    for any metric $g$ on $M$,
    \begin{equation}\label{eq-def-2}
    R_{g} \ge R_{g_{0}} \cdot \| g_{0} \|_{2, g},
    \end{equation}
    implies that $g = c \cdot g_{0}$ for some positive constant $c > 0$.
    Here, $\| g \|_{2, g_{s}}$ is the function defined in (\ref{eq-2-norm}).
\end{defi}

\begin{defi}\label{def-3}
    Let $M$ be a smooth manifold.
    A metric $g_{0}$ on $M$ is \textit{type III scalar curvature rigid in the sense of Listing} if
    for any metric $g$ on $M$, 
    \begin{equation}\label{eq-def-3}
    R_{g} \cdot g \ge R_{g_{0}} \cdot g_{0}~~\mathrm{on}~TM,
    \end{equation}
    implies that $g = c \cdot g_{0}$ for some positive constant $c > 0$.
\end{defi}
\begin{defi}\label{def-4}
    Let $M$ be a smooth manifold.
    A metric $g_{0}$ on $M$ is \textit{type IV scalar curvature rigid in the sense of Listing} if
    for any metric $g$ on $M$,
    \begin{equation}\label{eq-def-4}
    R_{g}^{2} \cdot g \ge R_{g_{0}}^{2} \cdot g_{0}~~\mathrm{on}~\Lambda^{2}TM,
    \end{equation}
    implies that $g = c \cdot g_{0}$ for some positive constant $c > 0$.
\end{defi}
\begin{rema}
    The condition (\ref{eq-def-3}) implies (\ref{eq-def-1}).
    And, if $R_{g_{0}} \ge 0$ on $M$, then the condition (\ref{eq-def-2}) is equivalent to (\ref{eq-def-4}).
\end{rema}

\begin{rema}\label{rema-minscal}
Here, we summarize some of the fundamental facts about the scalar minimum functional (denoted by $R_{min}$ below) and scalar curvature rigid metrics.

Let $M^{n}$ be a closed manifold of dimension $n \ge 2$ and $\mathcal{M}(M)$ the space of all Riemannian metrics on $M$.
    Consider the following \textit{scalar minimum functional}:
    \[
    R_{min} : \mathcal{M}(M) \rightarrow \mathbb{R}~;~g \mapsto \min_{M} R_{g}.
    \]
    For a fixed Riemannian metric $g_{0} \in \mathcal{M}(M)$, we define the following two functionals $F_{1, g_{0}}$ and $F_{2, g_{0}}$.
        \[
        \begin{split}
        F_{1, g_{0}} &: \mathcal{M}(M) \rightarrow \mathbb{R}~;~ g \mapsto \max_{M} R_{g_{0}} \cdot \| g_{0} \|^{2}_{1, g}, \\
        F_{2, g_{0}} &: \mathcal{M}(M) \rightarrow \mathbb{R}~;~ g \mapsto \max_{M} R_{g_{0}} \cdot \| g_{0} \|_{2, g}.
        \end{split}
        \]
    Then, from the definitions of scalar curvature rigid metrics of  types I and II, if $g_{0}$ is a type I (resp. type II) scalar curvature rigid in the sense of Listing, then $F_{1, g_{0}}$ (resp. $F_{2, g_{0}}$) is an upper bound of $R_{min}$ as a functional on $\mathcal{M}(M)$.
    That is, it holds that $F_{1, g_{0}}(g) \ge R_{min}(g)$ (resp. $F_{2, g_{0}}(g) \ge R_{min}(g)$) for all $g \in \mathcal{M}(M)$.
    \begin{proof}
        Suppose there is a metric $g \in \mathcal{M}(M)$ such that $R_{min}(g) =\min_{M} R_{g} > F_{1, g_{0}}(g)$.
        Then, from Definition \ref{def-1}, there is a positive constant $c > 0$ such that $g = c \cdot g_{0}$.
        Hence, $R_{g} = c^{-1} R_{g_{0}} \le c^{-1} F_{1, g_{0}}(g_{0}) = F_{1, g_{0}}(g)$ on $M$.
        This contradicts our supposition $R_{min}(g) =\min_{M} R_{g} > F_{1, g_{0}}(g)$.
        The proof for the corresponding statement to $F_{2, g_{0}}$ is similar.
    \end{proof}
    \noindent
    Moreover, each equality is attained by the scalar curvature rigid metric $g_{0}$ if $R_{g_{0}}$ is constant on $M$.
    On the other hand, Gromov \cite{gromov1996positive} introduced the \textit{$K$-area} of $M$ and gave an upper bound of $R_{min}$ on closed spin $n$-manifolds (``\textit{$K$-area inequality}'' in {\cite[$5 \frac{1}{4}$]{gromov1996positive}}), which is expressed using the $K$-area and the dimension $n$. See also \cite{listing2012scalar} for more detail.
    
    The functional $R_{min}$ also appears in certain characterizations of the Yamabe constant and the Yamabe invariant when it is nonpositive. As pointed out in {\cite[Section 3]{listing2012scalar}},
    when the Yamabe invariant, alias the sigma constant, $Y(M^{n})$ is non-positive, then
    \[
    \mathcal{M} \ni g \mapsto \mu(g) := Y(M^{n}) \cdot \mathrm{Vol}(M, g)^{-2/n} \in \mathbb{R}
    \]
    is an upper bound of $R_{min}$ as a functional on $\mathcal{M}(M)$.
    Indeed, for a conformal class $C$ on $M$, if its Yamabe constant $Y(M ,C)$ is non-positive, then 
    \[
    Y(M, C) = \sup_{g \in C} R_{min}(g) \cdot \mathrm{Vol}(M, g)^{2/n}
    \]
    (see {\cite[Corollary 5.16]{kobayashi2013yamabe}}).
    Hence, 
    \[
    Y(M) = \sup_{g \in \mathcal{M}(M)} R_{min}(g) \cdot \mathrm{Vol}(M, g)^{2/n}
    \]
    if $Y(M) \le 0$. Moreover, if a smooth metric $g$ attains the equality, then it is a Yamabe metric (i.e., $Y(M, [g]) 
 = Y(M)$) and an Einstein metric (see \cite{obata1971conjectures}).
 Here, the Yamabe invariant $Y(M^{n})$ is defined as follows.
 \[
 Y(M^{n}) := \sup_{C} Y(M, C)
 := \sup_{C} \inf_{h \in C} \frac{\int_{M} R_{h}\, d\mathrm{vol}_{h}}{\mathrm{Vol}(M, h)^{\frac{n-2}{n}}},
 \]
where the supremum is taken over all conformal classes on $M$.
\end{rema}

\section{Preliminaries}\label{section-preliminaries}
The following first variation formula of scalar curvature functional is well-known (see \cite{besse2007einstein}).
\begin{lemm}\label{lemm-scal-variation}
    $DR|_{\bar{g}} (h) = -\Delta_{\bar{g}} (tr_{\bar{g}} h) + \mathrm{div}_{\bar{g}} (\mathrm{div}_{\bar{g}} h) - \langle \mathrm{Ric}_{\bar{g}}, h \rangle_{\bar{g}}.$
    Here, $\Delta_{\bar{g}} f = \mathrm{tr}_{\bar{g}}\, \nabla_{\bar{g}} df$ is the non-positive Laplacian acting on the space of functions on $M$.
\end{lemm}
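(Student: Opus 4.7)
The plan is to differentiate the identity $R_{g_{s}} = g_{s}^{ij}(R_{g_{s}})_{ij}$ along the linear family $g_{s} = \bar{g} + s h$ at $s = 0$, separating the variation into the contribution from the inverse metric and the contribution from the Ricci tensor.

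First I would record the variation of the inverse metric. Since $g_{s}^{ij} g_{s,jk} = \delta^{i}_{k}$, differentiating at $s=0$ gives $\left.\frac{d}{ds}\right|_{s=0} g_{s}^{ij} = -h^{ij}$, where indices are raised with $\bar{g}$. Combined with the unperturbed Ricci tensor this immediately produces the pointwise term $-\langle \mathrm{Ric}_{\bar{g}}, h\rangle_{\bar{g}}$ in the final formula.

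Next I would compute the variation of the Levi-Civita connection. A direct calculation from $\Gamma^{k}_{ij} = \tfrac{1}{2} g^{kl}(\partial_{i} g_{jl} + \partial_{j} g_{il} - \partial_{l} g_{ij})$, done most cleanly in $\bar{g}$-normal coordinates so that $\bar{\nabla}$ reduces to partial differentiation at the base point, yields the tensorial quantity
\[
\left.\tfrac{d}{ds}\right|_{s=0} \Gamma^{k}_{ij} \;=\; \tfrac{1}{2}\, \bar{g}^{kl}\bigl(\bar{\nabla}_{i} h_{jl} + \bar{\nabla}_{j} h_{il} - \bar{\nabla}_{l} h_{ij}\bigr).
\]
Feeding this into the Palatini identity $\delta R_{ij} = \bar{\nabla}_{k}(\delta\Gamma^{k}_{ij}) - \bar{\nabla}_{i}(\delta\Gamma^{k}_{kj})$ gives the variation of the Ricci tensor as a divergence of the above expression.

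Finally I would contract with $\bar{g}^{ij}$ and pull the metric inside the covariant derivatives (which is legitimate because $\bar{\nabla}\bar{g} = 0$). Writing $\bar{g}^{ij}\bar{\nabla}_{i} h_{jl} = (\mathrm{div}_{\bar{g}} h)_{l}$ and $\bar{g}^{ij} h_{ij} = \mathrm{tr}_{\bar{g}} h$, the two Palatini pieces collapse to $\mathrm{div}_{\bar{g}}(\mathrm{div}_{\bar{g}} h) - \tfrac{1}{2}\Delta_{\bar{g}}(\mathrm{tr}_{\bar{g}} h)$ and $\tfrac{1}{2}\Delta_{\bar{g}}(\mathrm{tr}_{\bar{g}} h)$ respectively, so the total trace contribution is $-\Delta_{\bar{g}}(\mathrm{tr}_{\bar{g}} h) + \mathrm{div}_{\bar{g}}(\mathrm{div}_{\bar{g}} h)$. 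Adding the term from the inverse metric gives exactly the claimed formula. There is no serious obstacle here; the only care needed is to fix the sign convention for $\Delta_{\bar{g}}$ (the non-positive one declared in the introduction) and to keep track of the positions of indices when raising with $\bar{g}$, which is the reason I would perform the computation in normal coordinates.
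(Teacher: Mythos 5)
Your proposal is correct and follows essentially the same route as the paper: the paper first records the exact identity for $R_{ij}-\bar{R}_{ij}$ in terms of the difference tensor $W=\nabla-\bar{\nabla}$ and then linearizes along $g_t=\bar g+th$ (the quadratic $W$-terms vanish since $W|_{t=0}=0$), which is precisely your Palatini identity with $\delta\Gamma^k_{ij}=\tfrac12\bar g^{kl}(\bar\nabla_i h_{jl}+\bar\nabla_j h_{il}-\bar\nabla_l h_{ij})$, followed by the same contraction and cancellation. The arithmetic of your two traced pieces, $\mathrm{div}_{\bar g}(\mathrm{div}_{\bar g}h)-\tfrac12\Delta_{\bar g}(\mathrm{tr}_{\bar g}h)$ minus $\tfrac12\Delta_{\bar g}(\mathrm{tr}_{\bar g}h)$, together with $\delta g^{-1}=-h^{ij}$ giving $-\langle\mathrm{Ric}_{\bar g},h\rangle_{\bar g}$, matches the paper's computation exactly.
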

\noindent
A more detailed calculation shows that if $g = \bar{g} + h$ for a metric $\bar{g}$ and a symmetric $(0,2)$-tensor $h$ with $\| h \|_{\bar{g}} << 1$, then
\begin{equation}\label{eq-formula-general}
R_{g} = \bar{R} + DR|_{\bar{g}}(h) + (g+h)^{-1} h g^{-1} h g^{-1} * \mathrm{Ric}_{\bar{g}} + g^{-1} * g^{-1} * g^{-1} * \bar{\nabla} h * \bar{\nabla} h,
\end{equation}
where the term $g^{-1} * g^{-1} * g^{-1} * \bar{\nabla} h * \bar{\nabla} h$ is a contraction of three copies of $g^{-1}$ (i.e., $g$ with raised indices) and two copies of $\bar{\nabla} h = \bar{\nabla} g$.
And, the term $(\bar{g}+h)^{-1} h \bar{g}^{-1} h \bar{g}^{-1} * \mathrm{Ric}_{\bar{g}}$ is the trace of $\mathrm{Ric}_{\bar{g}}$ with respect to $\left( (\bar{g}+h)^{-1} h \bar{g}^{-1} h \bar{g}^{-1} \right)^{-1}$.
Note that $\bar{g} + h$ is positive definite if $\| h\|_{\bar{g}}$ is small enough.
All the proofs of these formulas are given in Section \ref{section-appendix} below.

Take $h = u \cdot g$ for some smooth function $u \in C^{\infty}(M)$ on $M$.
Then for any $s \in \mathbb{R}$,
\begin{equation}\label{eq-formula-conformal}
\begin{split}
    R_{g + t h} &(g + sh) \\
    &= R_{g} g + (DR|_{g} (h) g + R_{g} h) s \\
    &~~~+ \left( \frac{s^{2}u^{2}}{1+su} R_{g} + \frac{s^{2}}{4} (1-su^{-1})^{3} (2n-2)|\nabla u|^{2} \right) (g + sh) \\
    &= R_{g} g -s(n-1)(\Delta_{g} u) g \\
    &~~~+ (1+su) \left( \frac{s^{2}u^{2}}{1+su} R_{g} + \frac{s^{2}}{4} (1-su^{-1})^{3} (2n-2)|\nabla u|^{2} \right)g.
\end{split}
\end{equation}
Hence, if $M$ is a compact manifold with non-empty boundary $\partial M$, then we can take $u$ as the first eigenfunction of $\Delta_{g}$ and obtain the following.
\begin{prop}
    Let $(M^{n}, g)~(n \ge 2)$ be a compact Riemannian $n$-manifold with non-empty boundary $\partial M$.
    Let $u \in C^{\infty}(M)$ be the first eigenfunction of $\Delta_{g}$ with Dirichlet boundary condition.
    Then there is a small $s > 0$ (resp. $s < 0$) such that 
    the metric $g_{s,u} := (1+su) g$ satisfies that
    \begin{equation}\label{eq-scalboundary}
    R_{g_{s, u}} \cdot g_{s, u} > R_{g} \cdot g~~(\mathrm{resp.}~R_{g_{s, u}} \cdot g_{s, u} < R_{g} \cdot g)
    \end{equation}
    at each point in the interior of $M$ and $g_{s, u} = g$ on $\partial M$.
\end{prop}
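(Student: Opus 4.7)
The plan is to specialize the identity (\ref{eq-formula-conformal}) to $h = u \cdot g$, where $u \in C^{\infty}(M)$ is the first Dirichlet eigenfunction of $\Delta_{g}$, normalized so that $u > 0$ on the interior $\mathring{M}$ and $u|_{\partial M} = 0$. Write $\Delta_{g} u = -\lambda_{1} u$ with $\lambda_{1} > 0$. Because $u$ vanishes on $\partial M$, the conformal factor $1+su$ equals $1$ there, so $g_{s,u}|_{\partial M} = g|_{\partial M}$ automatically; this disposes of the boundary clause of the statement.

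For the interior pointwise inequality, I first compute with $h = ug$ that the standard first variation yields $DR|_{g}(ug) = -(n-1)\Delta_{g}u - u R_{g}$. Substituting into (\ref{eq-formula-conformal}) and using $(g+sh) = (1+su)g$, the first-order-in-$s$ term collapses to
\[
s\bigl(DR|_{g}(ug)\,g + R_{g}\,ug\bigr) \;=\; -s(n-1)(\Delta_{g}u)\,g \;=\; s(n-1)\lambda_{1} u \cdot g,
\]
while the second-order contribution (built from $R_{g}$, $u^{2}$ and $|\nabla u|^{2}$) has the form $s^{2} E(x,s)\, g$ with $E$ continuous and bounded on $M\times[-s_{0},s_{0}]$. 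Hence
\[
R_{g_{s,u}}\cdot g_{s,u} \;-\; R_{g}\cdot g \;=\; \bigl[\,s(n-1)\lambda_{1} u \,+\, s^{2} E(x,s)\,\bigr]\, g.
\]
At any $x \in \mathring{M}$ we have $u(x) > 0$, so the linear-in-$s$ coefficient is strictly positive; once $|s|$ is small enough, it dominates the $s^{2}$ remainder, giving (\ref{eq-scalboundary}) with ``$>$'' for small $s>0$ and ``$<$'' for small $s<0$ by the same argument with reversed sign of the leading term.

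The delicate step is making the choice of $s$ uniform over all of $\mathring{M}$. Near $\partial M$ one has $u \sim c\cdot \mathrm{dist}(\cdot,\partial M) \to 0$ while the Hopf lemma keeps $|\nabla u|$ bounded below on $\partial M$, so $E$ does not decay together with the linear coefficient $u$. To overcome this, I would unpack the precise second-order form of (\ref{eq-formula-conformal})---equivalently, the exact conformal identity
\[
R_{(1+su)g}(1+su) \;-\; R_{g} \;=\; -\frac{s(n-1)\Delta_{g} u}{1+su} \,+\, \frac{s^{2}(n-1)(6-n)\,|\nabla u|^{2}}{4(1+su)^{2}}
\]
---and carry out a boundary-layer comparison, using the Hopf asymptotics of $u$ and the fact that the remainder carries explicit powers of $(1+su)^{-1}$, to check that the linear term still dominates in every closed strip around $\partial M$ once $s$ is chosen small enough. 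This boundary-asymptotic bookkeeping is where the main technical work lies; the rest of the argument is the algebraic reduction outlined above.
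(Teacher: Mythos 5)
Your skeleton is the same as the paper's: take $h=ug$ with $u$ the first Dirichlet eigenfunction, note $u|_{\partial M}=0$ gives $g_{s,u}=g$ on $\partial M$, and observe that the first-order term $s(n-1)\lambda_{1}u\cdot g$ is positive in the interior (the paper essentially stops here, reading the claim off (\ref{eq-formula-conformal})). You go further and correctly flag the real difficulty: a single $s$ must work at \emph{every} interior point, yet near $\partial M$ the linear coefficient $u$ degenerates while the quadratic remainder carries $|\nabla u|^{2}$, which the Hopf lemma keeps bounded below. That objection is exactly right. The problem is that you leave the resolution as a sketch, and your own (correct) exact identity
\[
R_{(1+su)g}(1+su)-R_{g}=\frac{(n-1)s\lambda_{1}u}{1+su}+\frac{(n-1)(6-n)}{4}\cdot\frac{s^{2}|\nabla u|^{2}}{(1+su)^{2}}
\]
shows that the proposed boundary-layer comparison is either unnecessary or impossible, depending on $n$.

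Concretely: if $2\le n\le 6$ and $s>0$ is small, the second term is nonnegative and the first is strictly positive at every interior point, so the conclusion holds with no boundary asymptotics at all --- the ``delicate step'' is vacuous. If $n\ge 7$, then for \emph{any} fixed $s>0$ the first term tends to $0$ as $d(x,\partial M)\to 0$ while the second tends to $-\tfrac{(n-1)(n-6)}{4}s^{2}|\nabla u|^{2}<0$ (Hopf gives $|\nabla u|\ge c>0$ near $\partial M$), so the quantity is negative at interior points sufficiently close to the boundary; no bookkeeping and no smaller $s$ can make the linear term dominate there, since it vanishes where the quadratic term does not. Symmetrically, the $s<0$ branch fails for $n\le 5$ and works for $n\ge 6$. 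So the gap you identified is genuine and, outside the range $n\le 6$ (resp.\ $n\ge 6$ for $s<0$), it is fatal to this construction rather than a technicality: a correct statement must either restrict $n$, restrict the conclusion to compact subsets of the interior, or use a different deformation. (Note also that your exact conformal identity is the trustworthy one here; the remainder in the paper's (\ref{eq-formula-conformal}), with its $(1-su^{-1})^{3}$ factor, does not match the standard transformation law.)
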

If, moreover, $\frac{\partial u}{\partial \nu_{g}}$ is positive (resp. negative) everywhere on the boundary $\partial M$, then 
\begin{equation}\label{eq-meanboundary}
H_{g_{s,u}} > H_{g}~(\mathrm{resp.}~H_{g_{s,u}} < H_{g})~~\mathrm{on}~\partial M
\end{equation}
for sufficiently small $s >0$.
Here, $\nu_{g}$ is the unit normal vector field on $\partial M$ of $g$.

Let $x = (x^{1}, \cdots, x^{n}, x^{n+1})$ be the Cartesian coordinate of $\mathbb{R}^{n+1}$ and $\mathbb{S}^{n}_{+} := \{ x \in \mathbb{R}^{n+1} \mid x^{n+1} \ge 0 \} \subset \mathbb{R}^{n+1}$ the upper hemisphere.
Then since the coordinate function $x^{n+1}$ is homogeneous function in $\mathbb{R}^{n+1}$ of degree one, its restriction to $\mathbb{S}^{n}_{+}$ is an eigenfunction of $\Delta_{\delta_{\mathbb{S}^{n}_{+}}}$, whose eigenvalue is $n$.
Here, $\delta_{\mathbb{S}^{n}_{+}}$ is the restriction of the Euclidean metric $\delta$.
Therefore $(\mathbb{S}^{n}_{+}, g_{s,u} := (1+sx^{n+1}|_{\mathbb{S}^{n}_{+}}) \delta|_{\mathbb{S}^{n}_{+}})$ satisfies (\ref{eq-scalboundary}) and (\ref{eq-meanboundary}) for sufficiently small $s > 0$.
On the other hand, in order to construct a similar example of metric $h$ on $\mathbb{S}^{n}_{+}$  satisfying (\ref{eq-meanboundary}) and 
\[
R_{h} > n(n-1) = R_{\delta|_{\mathbb{S}^{n}_{+}}}~\mathrm{on}~\mathbb{S}^{n}_{+}
\]
instead of (\ref{eq-scalboundary}), a more subtle deformation is needed (see {\cite[Theorem 4]{brendle2011deformations}}).
\begin{prop}
    Let $(M^{n}, g)$ be a complete non-compact smooth Riemannian manifold. Assume that $\mathrm{Ric}_{g} \ge -K (n-1)$ for some $K \ge 0$ and $R_{g} > 0$ on $M$.
    Moreover, assume that there is a smooth positive function $u \in C^{\infty}(M)$ satisfying
    \[
    -\Delta_{g} u = \lambda u
    \]
    for some positive constant $\lambda > 0$.
    Then, for any $r > 0$, there is a small $s > 0$ (resp. $s < 0$) depending only on $n, g, \lambda, K$ and $r$ such that $g_{s, u} := (1+su) g$ is a smooth metric in $B_{r}(p)$ and that  
    \[
    R_{g_{s, u}} \cdot g_{s, u} > R_{g} \cdot g~~(\mathrm{resp.}~R_{g_{s, u}} \cdot g_{s, u} < R_{g} \cdot g)
    \]
    at each point of $B_{r}(p)$.
\end{prop}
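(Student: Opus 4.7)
The plan is to run the same strategy as in the preceding compact-with-boundary proposition, namely to apply the formula (\ref{eq-formula-conformal}) to $h = u\cdot g$, but now to localize all estimates to the precompact ball $\overline{B_{r}(p)}$ rather than to the whole manifold.

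First I would substitute $h = u\cdot g$ into (\ref{eq-formula-conformal}). A short computation (using $\mathrm{tr}_{g}(ug) = nu$, $\mathrm{div}_{g}(ug) = du$ and $\langle \mathrm{Ric}_{g}, ug\rangle_{g} = uR_{g}$) gives $DR|_{g}(ug) = -(n-1)\Delta_{g}u - uR_{g}$, so the linear-in-$s$ contribution $s\bigl(DR|_{g}(h)\, g + R_{g}\, h\bigr)$ collapses to $-s(n-1)(\Delta_{g}u)\, g$, and the eigenvalue hypothesis $\Delta_{g}u = -\lambda u$ rewrites this as $s(n-1)\lambda u \, g$, whose sign matches that of $s$, since $u,\lambda>0$.

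Second, because $(M,g)$ is complete, $\overline{B_{r}(p)}$ is compact by Hopf--Rinow, so $u$, $|\nabla u|_{g}$, $R_{g}$ and $\mathrm{Ric}_{g}$ are uniformly bounded there and $u \ge c_{0}>0$ by continuity and positivity. For $|s|$ small enough (depending on $\max_{\overline{B_{r}(p)}} u$), the factor $1+su$ stays in $[1/2,3/2]$, so $g_{s,u}$ is a Riemannian metric on $\overline{B_{r}(p)}$ and the explicit quadratic remainder
\[
\left(\frac{s^{2}u^{2}}{1+su}R_{g} + \frac{s^{2}}{4}(1-su^{-1})^{3}(2n-2)|\nabla u|^{2}\right)(g+sh)
\]
from (\ref{eq-formula-conformal}) is dominated by $Cs^{2}\, g$ for some constant $C$ depending on $n,r,g$. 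Combining the two contributions yields
\[
R_{g_{s,u}}\, g_{s,u} - R_{g}\, g = \bigl[\, s(n-1)\lambda u + O(s^{2})\,\bigr]\, g
\]
uniformly on $\overline{B_{r}(p)}$. Taking $s>0$ small enough that the linear lower bound $s(n-1)\lambda c_{0}$ strictly dominates the quadratic remainder gives the desired strict inequality; taking $s<0$ with the same smallness threshold reverses all signs and gives the converse.

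I do not foresee a serious obstruction: the argument is a direct localization of the compact-boundary case. The Ricci lower bound $\mathrm{Ric}_{g}\ge -K(n-1)$ and the positivity $R_{g}>0$ on $M$ play no essential role in the calculus above, but they enter naturally if one wants the smallness threshold for $s$ to depend \emph{explicitly} on $K,\lambda,r$ rather than implicitly through $g$: applying Cheng--Yau gradient and Harnack-type estimates to the positive eigenfunction $u$ one can bound $\sup_{\overline{B_{r}(p)}}u$, $\inf_{\overline{B_{r}(p)}}u$ and $|\nabla u|_{g}$ on $\overline{B_{r}(p)}$ by quantities depending only on $n,\lambda,K,r$, which is the only delicate point in turning the above sketch into a statement with the stated dependence of $s$.
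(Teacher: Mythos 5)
Your proposal is correct and follows essentially the same route as the paper: substitute $h = u\cdot g$ into (\ref{eq-formula-conformal}), use $-\Delta_{g}u=\lambda u$ to make the first-order term $s(n-1)\lambda u\,g$ have a definite sign, and absorb the quadratic remainder for small $|s|$. The ``delicate point'' you flag at the end is exactly how the paper handles it -- it invokes the Li--Wang gradient estimate (using $\mathrm{Ric}_{g}\ge -K(n-1)$) to bound the remainder by $C(r^{-2}+\lambda+K)$ and thereby obtain an explicit threshold for $s$.
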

\begin{proof}
First, we prove the case of $s > 0.$
   From formula (\ref{eq-formula-conformal}) and Li-Wang's gradient estimate {\cite[Theorem 6.1]{li2012geometric}}, there is a constant $C > 0$ depending on $n$ such that 
   \[
   \begin{split}
   R_{g_{s, u}} \cdot g_{s, u} &\ge R_{g} \cdot g + s(n-1)(\lambda u)g + s^{2} (1+su)u^{-1} \left( \frac{R_{g} u^{3}}{1+su} - s^{3} C(r^{-2} + \lambda + K) \right)g \\
   &\ge R_{g} \cdot g + s(n-1)(\lambda u)g \\
   &~~~~+ s^{2} (1+su)u^{-1} \min_{B_{r}(p)} R_{g} \left( \frac{u^{3}}{1+su} - \left( \min_{B_{r}(p)} R_{g} \right)^{-1} s^{3} C(r^{-2} + \lambda + K) \right)g
   \end{split}
   \]
   If $0 \le s \le (\max_{B_{r}(p)} u)^{-1}$, then $(1+su)^{-1} \ge 1/2$.
   So, the desired assertion holds if 
   \[
    0 < s < \min \left\{ \left( \frac{C^{-1}}{2} (r^{-2} + \lambda + K)^{-1} \left( \min_{B_{r}(p)} R_{g} \right) \min_{B_{r}(p)} u^{3} \right)^{1/3}, \frac{1}{\max_{B_{r}(p)} u} \right\}.
   \]
   
   Next, we prove the case of $s < 0$.
   If $0 \ge s \ge -(\max_{B_{r}(p)} u)^{-1}$, then $1+su \ge 0$ and
   \[
   \begin{split}
   R_{g_{s, u}} \cdot g_{s, u} &\le R_{g} \cdot g + s(n-1)(\lambda u)g \\
   &~~~~+ s^{2} u^{2} \left( R_{g} + (1+su)(1-su^{-1})^{3} C(r^{-2} + \lambda + K) \right)g.
   \end{split}
   \]
   If $0 \ge s \ge \max \left\{ - \frac{1}{\min_{B_{r}(p)}u}, -\min_{B_{r}(p)} u \right\} =: s_{r, u}$,
   \[
    R_{g} + (1+su)(1-su^{-1})^{3} C(r^{-2} + \lambda + K) \le \max_{B_{r}(p)} R_{g} + 16C(r^{-2} + \lambda + K).
   \]
   Hence, the desired assertion holds if
   \[
   0 > s > \max \left\{ s_{r, u}, -\frac{(n-1)\lambda}{\max_{B_{r}(p)}u\, (\max_{B_{r}(p)} R_{g} + 16C(r^{-2} + \lambda + K))} \right\}
   \]
   
\end{proof}
\section{Proofs of Main Theorems}\label{section-proof}
From the observation in Section \ref{section-preliminaries}, on every closed $n$-manifold $M$, we cannot deform every metric on $M$ in the conformal direction so that the quantity $R_{g} \cdot g$ increases at each point of $M$. 
Indeed, the first order term (in terms of the parameter $s$) of the perturbed quantity $R_{(1+su)g} (1+su) g$ for a smooth function $u \in C^{\infty}(M)$ is 
\[
-(n-1)(\Delta_{g} u) g.
\]
Hence, one can define the Lipschitz function $\overline{u}$\footnote{From Courant's nodal domain theorem, $\{ x \in M \mid v(x) > 0 \}$ and $\{ x \in M \mid v(x) < 0 \}$ are connected respectively.} as
\[
\overline{u}(x) :=
\begin{cases}
    v(x) &(x \in \{ x \in M \mid v(x) \ge 0 \}), \\
    -v(x) &(x \in \{ x \in M \mid v(x) < 0 \}),
\end{cases}
\]
where $v$ is the first eigenfunction of $-\Delta_g$.
Then, for all sufficiently small $s > 0$, the deformation $\{ g_s := g + s \overline{u} \}$ of $g$ satisfies
\[
R_{g_{s}} \cdot g_s > R_g \cdot g,~R_{g_{s}} > R_g \cdot \| g \|_{1, g_{s}}~\mathrm{and}~R_{g_{s}} > R_g \cdot \| g \|_{2, g_{s}}~~\mathrm{in}~M \setminus N(v),
\]
and each $g_s$ in $\mathrm{Lip}(M) \cap C^{\infty}(M \setminus N(v))$.
Here, $N(v) := \{ x \in M \mid v(x) = 0 \}$ is the nodal set of $v$.
On the other hand, in the smooth category, by the maximum principle, $\Delta_{g} u$ is sign-changing otherwise $u$ is constant on each connected component of $M$.
Therefore, in order to increase the quantity $R_{g} \cdot g$ at each point on the closed manifold, we need to deform the given metric in a direction transverse to the conformal one. 
Let $M$ be a closed manifold and $\mathcal{M}$ the space of all (smooth) Riemannian metrics on $M$.
From \cite{fischer1977manifold}, for any metric $g \in \mathcal{M}$, the tangent space $T_{g}\mathcal{M}$ at $g$ is orthogonally decomposed as
\[
(C^{\infty}(M) \cdot g + \{ \mathcal{L}_{X} g~|~X \in \Gamma(TM) \}) \oplus TT, 
\]
where the subspace $TT$ consists of \textit{tt-tensors} which are trace-free and divergence-free (with respect to $g$) symmetric $(0,2)$-tensors. 
The traceless Ricci tensor
\[
\overset{\circ}{\mathrm{Ric}_{g}} := \mathrm{Ric}_{g} - \frac{R_{g}}{n} g
\]
is orthogonal to the subspace $C^{\infty}(M) \cdot g$ (with respect to $L^{2}(M, g)$ inner product). Moreover, if $R_{g}$ is constant, then it is also a tt-tensor.
Indeed, from the contracted second Bianchi identity:
$g^{jk} R_{ij, k} = (1/2)\, R_{, i}$
and $R_{g} \equiv \mathrm{const}$,
$\overset{\circ}{\mathrm{Ric}_{g}}$ is divergence-free. Since $\overset{\circ}{\mathrm{Ric}_{g}}$ is also trace-free, hence it is a tt-tensor.
We are going to take the tensor $e^{f}\, \overset{\circ}{\mathrm{Ric}_{g}}$ for some $f\ \in C^{\infty}(M)$, which is still trace-free, as the variation $h$ in the following proof of our main theorems \ref{theo-1}, \ref{theo-2}, \ref{theo-3} and \ref{theo-4}.
First, we give a proof of Theorem \ref{theo-3}.
\begin{proof}[Proof of Theorem \ref{theo-3}]
Let $g_{s} = g + s\, e^{f}\, \overset{\circ}{\mathrm{Ric}_{g}}$.
   Then, from the formula (\ref{eq-formula-general}) and the contracted second Bianchi identity, we have 
    \begin{equation*}
    \begin{split}
        &R_{g_{s}} \cdot g_{s} \\
        &= \left( R_{g} + s e^{f} \left[  \left\langle \mathrm{Hess} f + \nabla f \otimes \nabla f, \mathrm{Ric}_{g} - \frac{R_{g}}{n} g \right\rangle_{g} - \| \mathrm{Ric}_{g} \|_{g}^{2} \right. \right.  \\
        &\qquad\left. \left.+ \frac{R^{2}_{g}}{n} + \left( 1 - \frac{1}{n} \right) \langle \nabla f, \nabla R_{g} \rangle _{g} + \left( \frac{1}{2} - \frac{1}{n} \right) \Delta_{g} R_{g} \right] + O(s^{2}) \right) \\
        &\qquad\times \left( g + s\, e^{f} \left( \mathrm{Ric}_{g} - \frac{R_{g}}{n}g \right) \right) \\
        &= R_{g} \cdot g +s e^{f} \left[ \left\{ \left\langle \mathrm{Hess} f + \nabla f \otimes \nabla f, \mathrm{Ric}_{g} - \frac{R_{g}}{n} g \right\rangle_{g} - \| \mathrm{Ric}_{g} \|_{g}^{2} \right. \right. \\
        &\qquad \left. \left. + \left( 1 - \frac{1}{n} \right) \langle \nabla f, \nabla R_{g} \rangle _{g} + \left( \frac{1}{2} - \frac{1}{n} \right) \Delta_{g} R_{g} \right\} g + R_{g} \cdot \mathrm{Ric}_{g} \right]  + O(s^{2})
    \end{split}
    \end{equation*}
    on $TM$.
    Hence the desired assertion follows directly from this formula.
\end{proof}
\begin{proof}[Proof of Theorem \ref{theo-1}]
   Let $g_{s} = g + s\, e^{f}\, \overset{\circ}{\mathrm{Ric}_{g}}$
    Then, as in the above proof of Theorem \ref{theo-3} above, we have
    \[
    \begin{split}
        R_{g_{s} } &- R_{g} \cdot \| g \|^{2}_{1,\, g_{s}}  \\
        &= R_{g} + se^{f} \left[  \left\langle \mathrm{Hess} f + \nabla f \otimes \nabla f, \mathrm{Ric}_{g} - \frac{R_{g}}{n} g \right\rangle_{g} - \| \mathrm{Ric}_{g} \|_{g}^{2} \right. \\
          &\qquad\left.+ \frac{R^{2}_{g}}{n} + \left( 1 - \frac{1}{n} \right) \langle \nabla f, \nabla R_{g} \rangle _{g} + \left( \frac{1}{2} - \frac{1}{n} \right) \Delta_{g} R_{g} \right] + O(s^{2}) \\
        &\qquad- R_{g}\, \frac{1}{1 + s \min_{v \in T_{x}M,\, |v|_{g} = 1}e^{f} \overset{\circ}{\mathrm{Ric}_{g}}(v,v)} \\
        &=R_{g} + se^{f} \left[  \left\langle \mathrm{Hess} f + \nabla f \otimes \nabla f, \mathrm{Ric}_{g} - \frac{R_{g}}{n} g \right\rangle_{g} - \| \mathrm{Ric}_{g} \|_{g}^{2} \right. \\
          &\qquad\left.+ \frac{R^{2}_{g}}{n} + \left( 1 - \frac{1}{n} \right) \langle \nabla f, \nabla R_{g} \rangle _{g} + \left( \frac{1}{2} - \frac{1}{n} \right) \Delta_{g} R_{g} \right] + O(s^{2}) \\
        &\qquad- R_{g} \left( 1 - s \min_{v \in T_{x}M,\, |v|_{g} = 1}e^{f} \overset{\circ}{\mathrm{Ric}_{g}}(v,v) + O(s^{2}) \right).
    \end{split}
    \]
    Then the assertion immediately follows from this.
\end{proof}
\begin{proof}[Proof of Theorem \ref{theo-2}]
Let $g_{s} = g + s\, e^{f}\, \overset{\circ}{\mathrm{Ric}_{g}}$
    Then, as in the above proof of Theorem \ref{theo-1}, we have
\begin{equation}\label{eq-proof-theo-2-1}
    \begin{split}
    R_{g_{s}} &- R_{g} \cdot \| g \|_{2, g_{s}} \\
    &= R_{g} + se^{f} \left[  \left\langle \mathrm{Hess} f + \nabla f \otimes \nabla f, \mathrm{Ric}_{g} - \frac{R_{g}}{n} g \right\rangle_{g} - \| \mathrm{Ric}_{g} \|_{g}^{2} \right. \\
          &\qquad\left.+ \frac{R^{2}_{g}}{n} + \left( 1 - \frac{1}{n} \right) \langle \nabla f, \nabla R_{g} \rangle _{g} + \left( \frac{1}{2} - \frac{1}{n} \right) \Delta_{g} R_{g} \right] + O(s^{2}) \\
        &\qquad- R_{g}\, \sqrt{\max_{v \in \Lambda^{2}T_{x}M \setminus \{ 0 \}} \frac{g(v,v)}{\left( g + s\, e^{f}\, \overset{\circ}{\mathrm{Ric}_{g}} \right)(v,v)}}.
    \end{split}
    \end{equation}
    Here, for any regular $n \times n$ matrix $A$ and a $n \times n$ matrix $B$, 
    \[
    \det{(A +t B)} =
    \det{A} \cdot \det{\left( \mathrm{Id}_{n} + t A^{-1} B \right)}
    = \det{A} \left( 1 + t\, \mathrm{trace} (A^{-1} B) + O(t^{2}) \right).
    \]
    So, we have
    \begin{equation}\label{eq-proof-theo-2-2}
    \begin{split}
    \sqrt{\max_{v \in \Lambda^{2}T_{x}M \setminus \{ 0 \}} \frac{g(v,v)}{\left( g + s\, e^{f}\, \overset{\circ}{\mathrm{Ric}_{g}} \right)(v,v)}}
    &= 1 - \frac{s}{2} \min_{v \in \Lambda^{2} T_{x} M \setminus \{ 0 \}} e^{f} g^{-1} \overset{\circ}{\mathrm{Ric}_{g}}(v,v) + O(s^{2}).
    \end{split}
    \end{equation}
Putting (\ref{eq-proof-theo-2-1}) and (\ref{eq-proof-theo-2-2}) together, we can immediately get the assertion. 
\end{proof}

\begin{proof}[Proof of Theorem \ref{theo-4}]
    As the proof of Theorem \ref{theo-2}, one can also compute the difference 
    $R_{g_{s}}^{2} \cdot g - R_{g}^{2} \cdot g$ on $\Lambda^{2} TM$.
    Then, from the above calculation in the proof of Theorem \ref{theo-2}, it is easy to see that the assertion holds .
\end{proof}
Next, we give a proof of Theorem \ref{theo-main-unstable}.
In the proof, we consider another trace-free tensor: the traceless Hessian $\nabla^2 u - \frac{\Delta_g u}{n} g$ of a function $u \in C^{\infty}(M)$.
\begin{proof}[Proof of Theorem \ref{theo-main-unstable}]
Let $h = \nabla^2_{g} u - \frac{\Delta_g u}{n} g$, then
\[
\begin{split}
DR|_{g}(h) &= \mathrm{div}_g \left( \mathrm{div}_g \left( \nabla^2_{g} u - \frac{\Delta_g u}{n} g \right) \right) - \left\langle \mathrm{Ric}_g, \nabla^2_{g} u - \frac{\Delta_g u}{n} g \right\rangle \\
&= \left( 1- \frac{1}{n} \right) \Delta_g^{2} u + \frac{1}{2} \langle \nabla R_g, \nabla u \rangle + \frac{\Delta_g u}{n} R_g
\end{split}
\]
by Lemma \ref{lemm-scal-variation} and the Ricci identity.
Hence, from the formula (\ref{eq-formula-general}) and the assumption $R_g = \mathrm{const}$ (cf. proof of Theorem \ref{theo-1} above), we have for $g_s = g -s h$ and at $x \in M$,
\begin{equation}\label{eq-obata-deformation}
\begin{split}
  R_{g_s} - R_g \cdot \| g \|_{1, g_s}^2 &= -s \left[ \left( \left( 1- \frac{1}{n} \right) \Delta_g^{2} u + \frac{\Delta_g u}{n} R_g \right) \right. \\
 &\qquad+ \left. R_g\, \min_{v \in T_x M,\, |v|_g=1} \left( \nabla_g^2 u - \frac{\Delta_g u}{n}g \right)(v,v) \right] + o(s).
\end{split}
\end{equation}
First, we assume that $g$ is unstable, thus $R_g$ is a positive constant.
Let $v$ the first eigenfunction of $-\Delta_g$ with the first eigenvalue $\lambda_1$.
Then, our assumption implies that $ 0 < \lambda_1 < \frac{R_g}{n-1}$.
One can decompose $M$ into
\[
M = M(v)^{+} \sqcup M(v)^{-} \sqcup N(v),
\]
where
$M(v)^{+} = v^{-1}((0, +\infty))$, $M(v)^{-} = v^{-1}((-\infty, 0))$ and $N(v) = v^{-1}(\{ 0 \})$.
Here, $M^{\pm}(v)$ are open subset of $M$ and $N(v)$ is a closed subset of $M$.
Moreover, Courant's nodal domain theorem states that $M^{\pm}(v)$ are both connected.
Then, since $-\Delta_g v = \lambda_1 v$ and hence $\int_M v\, d\mathrm{vol}_g = 0$, there is a smooth function $\bar{u} \in C^{\infty}(M)$ such that $\Delta_g \bar{u} = v$ on $M$.
Then, define the function $u \in C^{\infty}(M \setminus N(v))$ as
\[
u :=
\begin{cases}
    -\bar{u} &\mathrm{on}~M(v)^{+}, \\
    \bar{u} &\mathrm{on}~M(v)^{-}.
\end{cases}
\]
Then, by the choice of the function $u$ and (\ref{eq-obata-deformation}) above, we can obtain the desired assertion for the type I scalar curvature rigidity. 
Note that since $\nabla_g^2 u - \frac{\Delta_g u}{n}g$ is trace free (with respect to $g$),
\[
 \min_{v \in T_x M,\, |v|_g=1} \left( \nabla_g^2 u - \frac{\Delta_g u}{n}g \right)(v,v) \le 0
\]
at each point $x \in M$.
Next, if $g$ is stable and $R_g \le 0$, by replacing $s$ with $-s$, we can show the assertion in the same way.
Finally, if $g$ is stable and $R_g$ is a positive constant, by replacing $u$ with $-u$, we can show the assertion in the same way.

Proof for the type II scalar curvature rigidity is the same (cf. proof of Theorem \ref{theo-2} above).
\end{proof}
\begin{rema}
    If the nodal set $N(v)$ is smooth enough so that there are solutions $u_1 \in C^{\infty}(M^{+}(v)) \cap C(\overline{M^{+}(v)}),\, u_2 \in C^{\infty}(M^{-}(v)) \cap C(\overline{M^{-}(v)})$ of the Laplace equations:
    \[
    \begin{cases}
        \Delta_g u_1 = 0~\mathrm{in}~M(v)^{+},~~u_1|_{N(v)} = -\bar{u}|_{N(v)}, \\
        \Delta_g u_2 = 0~\mathrm{in}~M(v)^{-},~~u_2|_{N(v)} = \bar{u}|_{N(v)},
    \end{cases}
    \]
    then we can take the function $u \in C^{\infty}(M \setminus N(v))$ as
\[
u :=
\begin{cases}
    -\bar{u} - u_1 &\mathrm{on}~M(v)^{+}, \\
    \bar{u} - u_2 &\mathrm{on}~M(v)^{-}, \\
    0 &\mathrm{on}~N(v).
\end{cases}
\]
Then, $u$ is in $\mathrm{Lip}(M)$ and $\Delta_g u = \mp \Delta_g \overline{u}$ on $M(v)^{\pm}$.

If the degenerate part $S(v) := \{ x \in N(v) \mid |\nabla v| = 0 \}$ of $N(v)$ is finite, then one can construct solutions $u_1, u_2$ as above.
Indeed, let $S(v) = \{ x_1, \cdots, x_k \}$. 
From {\cite[Theorem 2.2]{cheng1976eigenfunctions}}, $N(v) \setminus S(v)$ is smooth $(n-1)$-dimensional submanifold of $M$.
Take a family of disjoint geodesic balls $\{ B_{\rho_{j}} (x_k) \}_{i = 1}^{k}$ centered at each $x_i~(i = 1, \cdots, k)$ with sufficiently small radius $\rho_j > 0$, where $\{ \rho_j \}$ is a decreasing sequence of positive real numbers.
Consider $(M(v)^{+} \setminus \cup^{k}_{i=1} B_{\rho_j}(x_i)) \cup N(v) \cup (\cup^{k}_{i=1} \partial B_{\rho_j}(x_i))$ for each $j$ and an appropriate smoothing it, we obtain the family of smooth (up to the boundary) domains $\{ \Omega_j \}$ which exhausts $M(v)^{+} \cup N(v) \setminus S(v)$.
Let $u_j$ be the solution of 
\[
\begin{cases}
    \Delta_g u_j = 0 &\mathrm{in}~\Omega_j \\
    u_j|_{\partial \Omega_j} = -\overline{u}|_{\partial \Omega_j} &\mathrm{on}~\partial \Omega_j.
\end{cases}
\]
Then, since $\overline{u}$ is smooth on $M$, we have a $C^{2, \alpha}$-estimate up to the boundary
\[
\| u_j \|_{C^{2, \alpha}(\overline{\Omega_j})} \le C
\]
for some $C = C(M^n, v) > 0$ and some $\alpha \in (0, 1)$.
Here, $C$ is independent of $j$.
Thus, by taking a subsequece, we can obtain a function $\overline{u}_1 \in C^{2, \beta}(M(v)^{+} \cup N(v) \setminus S(v))$ such that
$\Delta_g \overline{u}_1 = 0$ on $M(v)^{+} \cup N(v) \setminus S(v)$.
From the construction of $\{ u_j \}$, if we set
\[
u_1(x) :=
\begin{cases}
\overline{u}_1(x) & x \in M(v)^{+} \cup N(v) \setminus S(v) \\
 -\overline{u}(x)  &x = x_i~(i = 1, 2, \cdots, k),
\end{cases}
\]
then $u_1 \in C(M(v)^{+} \cup N(v))$ and this is the desired solution on $M(v)^{+} \cup N(v)$.
The construction of $u_2$ is the same.

\bigskip
More generally, if $S(v)$ is approximated by a family $\{ \Omega^{+}_j \}$ (resp. $\{ \Omega^{-}_{j} \}$) of smooth domains (up to boundary) which exhausts $M(v)^{+} \cup N(v) \setminus S(v)$ (resp. $M(v)^{-} \cup N(v) \setminus S(v)$) and converges to $M(v)^{+} \cup N(v) \setminus S(v)$ (resp. $M(v)^{-} \cup N(v) \setminus S(v)$) uniformly, i.e. for any $\varepsilon > 0$, there exists $j_0  = j_0(\varepsilon)$ such that $\sup \{ d(x, S(v)) \mid x \in \partial \Omega_j \} < \varepsilon$ for all $j \ge j_0$, then one can construct solutions $u_1, u_2$ as above in the same way as above.
However, in general, all we know is that $S(v)$ is a closed countably $(n-2)$-rectifiable subset of $M$ \cite{hardt1989nodal}.
\end{rema}
\begin{rema}
    From the proof of Theorem \ref{theo-main-unstable} above,
    one can also observe that
    \[
    R_{g_s} > R_g~\mathrm{on}~M \setminus N(v)
    \]
    for all sufficiently small $s > 0$.
    Here, the family of the metrics $\{ g_s \}$ is the same as the one in the proof of Theorem \ref{theo-main-unstable} above. (Hence, each $g_s$ is smooth only on $M \setminus N(v)$.)
    However, of course, the ``size'' information such as length, volume, etc. may vary in general.
\end{rema}
Using the same deformation, we can also show the following.
\begin{prop}\label{prop-local-obata}
    Let $(M^n, g)~(n \ge 3)$ be a closed Riemannian manifold whose scalar curvature $R_g$ is a nonzero constant.
    If $\frac{R_g}{n-1}$ is an eigenvalue of $-\Delta_g$, then for any corresponding eigenfunction $u \in C^{\infty}(M)$, 
    \[
    U(u) := \left\{ x \in M \left|~ \left\| \nabla^{2}_g u - \frac{\Delta_g u}{n}g \right\|(x) \neq 0 \right. \right\}
    \]
    and any compact set $K \subset U(u)$, there is a positive small constant $s_0 = s_0(K) > 0$ such that
    for all $0 < s \le s_0$, $g_s = g - \mathrm{sign}(R_g)\,  s \left( \nabla^2_g u - \frac{\Delta_g u}{n}g \right)$ is a Riemannian metric on $M$ and 
    \[
R_{g_s} > R_g \cdot \| g \|^2_{1, g_s},~~R_{g_s} > R_g \cdot \| g \|_{2, g_s}~~\mathrm{on}~K.
    \]
\end{prop}
\begin{rema}
From {\cite[Theorem 24]{kuhnel1988conformal}} (cf. \cite{tashiro1965complete, tashiro1967conformal}), if $(M, g)$ satisfies the assumption of Proposition \ref{prop-local-obata} and is not isometric to the standard sphere of certain radius, then $U(u)$ is always nonempty for any $u \in C^{\infty}(M)$.
\end{rema}
By taking the first eigenfunction of the conformal Laplacian $-\Delta_{g} + \frac{n-2}{4(n-1)} R_g$ as $v$ instead in the proof of Theorem \ref{theo-main-unstable}, we can similarly prove the following:
\begin{theo}
    Let $(M^n, g)~(n \ge 3)$ be a closed Riemannian manifold and $\lambda_1$ be the first eigenvalue of the conformal Laplacian $-\Delta_g + \frac{n-2}{4(n-1)}R_g$.
    Assume the scalar curvature $R_g$ is constant.
    Then the following holds:
    \begin{itemize}
        \item If $R_g < 0$ and $\lambda_1 < \frac{n+2}{4(n-1)}R_g$, then there is a sufficiently small $s_0 > 0$ and a function $u \in C^{\infty}(M)$ such that
        $g_s := g +s\left( \nabla^2_{g} u - \frac{\Delta_g}{n} g \right)$ is a Riemannian metric, and
        \[
        R_{g_s} > R_g \cdot \| g \|^2_{1, g_s}~\mathrm{and}~R_{g_s} > R_g \cdot \| g \|_{2, g_s}~\mathrm{on}~M
        \]
        for all $s \in (0, s_0]$.
        \item If $R_g > 0$ and $\lambda_1 > \frac{n+2}{4(n-1)}R_g$, then there is a sufficiently small $s_0 > 0$ and a function $u \in C^{\infty}(M)$ such that
        $g_s := g -s\left( \nabla^2_{g} u - \frac{\Delta_g}{n} g \right)$ is a Riemannian metric, and
        \[
        R_{g_s} > R_g \cdot \| g \|^2_{1, g_s}~\mathrm{and}~R_{g_s} > R_g \cdot \| g \|_{2, g_s}~\mathrm{on}~M
        \]
        for all $s \in (0, s_0]$.
    \end{itemize}
\end{theo}
\begin{proof}
    Let $v \in C^{\infty}(M)$ be the positive first eigenfunction of the conformal Laplacian $-\Delta_g + \frac{n-2}{4(n-1)}R_g$.
    Let $u \in C^{\infty}(M)$ be the solution of $\Delta_g u = v - \int_M v\, d\mathrm{vol}_g$.
    Set $g_s = g -s \left( \nabla^2_{g} u - \frac{\Delta_g u}{n}g \right)$, then it becomes a Riemannian metric on $M$ for each sufficiently small $s > 0$.
    Since $R_g = \mathrm{const}$, from the calculation of the proof of Theorem \ref{theo-cscdeformation}, we have
    \[
    \begin{split}
    R_{g_s} - R_g \cdot \| g \|^2_{1, g_s} &= -s \left[ \left( -\frac{n-1}{n}\lambda_1 + \frac{n+2}{4n}R_g \right)v - \frac{R_g}{n} \left( \int_M v\, d\mathrm{vol}_g \right) \right. \\
    &\qquad\left. + R_g \min_{v \in T_X M,\, |v|_g = 1} \left( \nabla^2_{g} u - \frac{\Delta_g u}{n} g \right) \right] + o(s) \\
    &=  -s \left[ \left( -\frac{n-1}{n}\lambda_1 + \frac{n+2}{4n}R_g \right)v \right. \\
    &\qquad\left. + R_g \min_{v \in T_X M,\, |v|_g = 1} \left( \nabla^2_{g} u - \frac{v}{n} g \right) \right] + o(s).
    \end{split}
    \]
    Here, the trace of $\nabla^2_{g} u - \frac{v}{n} g$ with respect to $g$ is 
    $-\int_M v\, d\mathrm{vol}_g < 0$.
    Hence, 
    \[
    \min_{v \in T_X M,\, |v|_g = 1} \left( \nabla^2_{g} u - \frac{v}{n} g \right) < 0
    \]
    at each $x \in M$.
    Therefore, when $R_g = \mathrm{const} > 0$, we can show the desired assertion if we take $s_0 > 0$ sufficiently small.
    When $R_g = \mathrm{const} < 0$, by replacing $s$ with $-s$, the same argument above leads to the assertion.
    Proof for the type II scalar curvature rigidity is the same.
\end{proof}
Finally, we give a proof of Theorem \ref{theo-cscdeformation}.
\begin{proof}[Proof of Theorem \ref{theo-cscdeformation}]
According to \cite{anderson2005uniqueness},
if $M^n~(n \ge 3)$ is a closed manifold and $g$ is a strongly stable unique non-negative Yamabe metric with unit volume in its conformal class (then every metric sufficiently $C^{\infty}$-close to $g$ also contains the unique Yamabe metric in its conformal class), then for any $(0,2)$-tensor $h$ with $\mathrm{tr}_{g} h = 0$, we have
  \[
  R_{\gamma_{t}} \cdot \gamma_{t} - R_{g} \cdot g = R_{g} (\gamma_{t} - g) + t \left( - \int_{M} \langle \overset{\circ}{\mathrm{Ric}}_{g}, h \rangle\, d\mathrm{vol}_{g} \right) \gamma_{t} + o(t)
  \]
  for all sufficiently small $t > 0$.
  Here, $\gamma_{t} \in [g_{t} := g + th]$ is the unique Yamabe metric in its conformal class with unit volume.
Since $\gamma_{t}$ is the unique Yamabe (hence constant scalar curvature) metric in $[\gamma_{t}]$ for all sufficiently small $t > 0$, 
\[
\gamma_{t} - g = t\, \mathrm{pr}_{T_{g}\mathcal{C}_1} (h) + o(t),
\]
where $\mathrm{pr}_{T_{g}\mathcal{C}_1}$ is the projection onto the tangent space of $\mathcal{C}_1$ at $g$, where $\mathcal{C}_1$ is the space of all Riemannian metrics with constant scalar curvature and unit volume.
In particular, if $g$ is an Einstein metric and $h$ is a tt-tensor with respect to g, then $\mathrm{pr}_{T_{g}\mathcal{C}}(h) = h$.
Therefore, if we take $h = -\overset{\circ}{\mathrm{Ric}}_{g}$, then we have
\[
\begin{split}
R_{\gamma_{t}} \cdot \gamma_{t} - R_{g} \cdot g &= t \left( \| \overset{\circ}{\mathrm{Ric}}_{g} \|^{2}_{L^{2}(M, g)} g - R_{g} \cdot \mathrm{pr}_{T_{g}\mathcal{C}}(\overset{\circ}{\mathrm{Ric}}_{g}) \right) + o(t).
\end{split}
\]
Then, the assertion follows from this formula (cf. proof of Theorem \ref{theo-1} and \ref{theo-2}).
\end{proof}
 By {\cite[Corollary 2]{matsuo2014prescribed}}, for every closed manifold $M$ of dimension $\ge 3$ with positive Yamabe invariant, there is a non-Ricci-flat scalar-flat metric $g$ on $M$.
  Thus, the above argument with $h = -\mathrm{Ric}_{g}$ for such $g$ implies that $R_{\gamma_{t}} \cdot \gamma_t > 0$ for any sufficiently small $t > 0$ where $\gamma_{t} \in [g + t\, \mathrm{Ric}_{g}]$ is the unique unit-volume Yamabe metric in its conformal class.
  Therefore, such a metric $g$ is not a local maximizer of the functional : $\mathcal{M}_{1} \ni h \mapsto Y(M, [h])$, and neither type I, II nor III scalar curvature rigid in the sense of Listing.

  \begin{ques}
     Let $M$ be a smooth manifold of positive Yamabe invariant $Y(M) > 0$.
     For a strongly stable non-Einstein unique Yamabe metric $h$ on $M$ with unit volume, is there a positive Yamabe metric $g$ with unit volume for which the inequality in (\ref{eq-kato}) is strict?
  \end{ques}

\section{Examples}\label{section-examples}
\subsection{On the product of two Einstein manifolds}
Let $m, n \ge 2$ and $(M^{m}, g_{M})$, $(N^{n}, g_{N})$ be closed Riemannian manifolds with constant curvature $1$ and $-1$ respectively.
Consider the metric of the form
\[
g_{\lambda} := g_{M} + \lambda g_{N},
\]
where $\lambda \ge 1$ is a scaling constant. 
Then, for each point $x \in M \times N$,
\[
\mathrm{Ric}(g_{\lambda}) = 
\begin{cases}
    (m-1) g_{M^{m}} & \mathrm{on}~T_{\mathbf{p}_{1} x} M^{m} \subset T_{x} (M^{m} \times N^{n}), \\
    -(n-1) g_{N^{n}} &  \mathrm{on}~T_{\mathbf{p}_{2} x} N^{n} \subset T_{x} (M^{m} \times N^{n}),
\end{cases}
\]
and hence
\[
R(g_{\lambda}) = m(m-1) -\frac{n(n-1)}{\lambda}.
\]
Here, $\mathbf{p}_{i}~(i = 1,2)$ denotes the natural projection from, respectively $M^{m}$ and $N^{n}$ to $M^{n} \times N^{n}$. 

So, if we put $n = m$, then one can check that $R(g_{\lambda}) \ge 0$ and $g_{\lambda}$ satisfies the assumptions of Theorems \ref{theo-1}, \ref{theo-2}, \ref{theo-3} and \ref{theo-4} for all $\lambda \ge 1$.
Moreover, if we put $m = n+1$, one can also check that $R(g_{1}) \ge 0$ and $g_{1}$ satisfies the assumptions of Theorems \ref{theo-1}, \ref{theo-2}, \ref{theo-3} and \ref{theo-4}.
On the other hand, we can directly check that $g_{1}$ is none of type I, II, III or IV scalar curvature rigid in the sense of Listing.
In fact, the family $\{ g_{\lambda} \}_{\lambda \ge 1}$ gives a deformation which suggests that $g_{1}$ is not rigid in each sense. 
In particular, for every dimension $n \ge 4$, there is an $n$-dimensional manifold on which there is a non-rigid metric $g$ in each sense.

\subsection{Examples on Lie groups}\label{subsection-lie}
We can construct examples of left invariant metrics on some three dimensional Lie groups that satisfy the assumptions of our main theorems in Section \ref{section-introduction}.
Curvatures of left invariant metrics on Lie groups have been studied by Milnor in \cite{milnor1976curvatures}.
Especially, the case of three-dimensional unimodular is written in Section 4 of \cite{milnor1976curvatures}.
The following are examples of metrics on unimodular three-dimensional Lie groups, which satisfy the assumption of Theorem \ref{theo-1} or \ref{theo-3}.
\begin{itemize}
    \item A left invariant metric on $SU(2)$ (which is homeomorphic to the unit 3-sphere $\mathbb{S}^{3}$), whose signature of Ricci quadratic form is $(+, -, -)$
    (see also {\cite[Example 2]{shin1994examples}}).
    More specifically, we consider the Berger sphere:
    \[
    \begin{split}
        \mathbb{S}^{3}(1) \cong SU(2) 
        &= \left\{ A \in M(2, \mathbb{C}) \mid~\det A =1,~A^{*} = -A \right\} \\
        &= \left\{ \left. \begin{pmatrix}
            z &-\bar{w} \\
            w &\bar{z}
            \end{pmatrix} \right|~(z, w) \in \mathbb{C}^{2},~|z|^{2} + |w|^{2} = 1
        \right\},
    \end{split}
    \]
    and set 
    \[
    X_{1} =
    \begin{pmatrix}
        \sqrt{-1} &0 \\
        0 &-\sqrt{-1}
    \end{pmatrix},~
    X_{2} =
    \begin{pmatrix}
        0 &1 \\
        -1 &0
    \end{pmatrix},~
    X_{3} =
    \begin{pmatrix}
        0 &\sqrt{-1} \\
        \sqrt{-1} &0
    \end{pmatrix}.
    \]
    We define the left invariant metric $g_{s,t}~(1 \le s \le t)$ on $SU(2)$ so that
    \[
    g_{s,t}(X_{1}, X_{1}) = 1, g_{s,t}(X_{2}, X_{2}) = s, g_{s,t}(X_{3}, X_{3}) = t, g_{s,t}(X_{i}, X_{j}) = 0~(i \neq j).
    \]
    Then, using the orthonormal frame 
    \[
    v_{1} := X_{1},~v_{2} := \frac{1}{\sqrt{s}} X_{2},~v_{3}:= \frac{1}{\sqrt{t}} X_{3},
    \]
    one can compute the Ricci and scalar curvatures as follows.
    \[
    \begin{split}
    \mathrm{Ric}_{g_{s,t}} (v_{1}, v_{1}) &= -\frac{1}{st} (-2 +2t^{2} + 2s^{2} -4st),  \\
\mathrm{Ric}_{g_{s,t}}(v_{2}, v_{2}) &= -\frac{1}{st} (2 + 2t^{2} -2s^{2} -4t), \\
\mathrm{Ric}_{g_{s,t}} (v_{3}, v_{3}) &= -\frac{1}{st} (2 -2t^{2} + 2s^{2} -4s),
    \end{split}
    \]
    and
    \[
    R_{g_{s,t}} = \frac{2}{st}\{ 2(s+t+st) - (1+s^{2} +t^{2}) \}.
    \]
   For example, if we consider the case of $(s, t) = (1, 4 - \varepsilon/2)~(\varepsilon \le 6)$ and set $g_{\varepsilon} := g_{1, 4 - \varepsilon/2}$, then
        \[
        \mathrm{Ric}_{g_{\varepsilon}}(v_{i}, v_{i}) = -4 + \varepsilon~(i = 1,2),~\mathrm{Ric}_{g_{\varepsilon}}(v_{3}, v_{3}) = 8-\varepsilon,
        \]
        and 
        \[
        \| \overset{\circ}{\mathrm{Ric}}_{g_{\varepsilon}} \|_{g_{\varepsilon}}^{2} = \frac{3}{2} \left( 8 - \frac{4}{3}\varepsilon \right)^{2},~R_{g_{\varepsilon}} \left( \overline{\lambda}_{Ric} - \frac{R_{g_{\varepsilon}}}{3} \right) = \varepsilon \left( 8 - \frac{4}{3}\varepsilon \right).
        \]
        Note here that $g_{\varepsilon}$ is the standard metric on $\mathbb{S}^3(1)$ with constant curvature if and only if $\varepsilon = 6$.
        Hence, if $0 < \varepsilon < 4$, then $g_{\varepsilon}$ has a positive constant scalar curvature and satisfies the assumptions of Theorems \ref{theo-1}, \ref{theo-2}, \ref{theo-3} and \ref{theo-4}.
        On the other hand, one can directly check that for any $\varepsilon_0 < 4$, $R_{g_{\varepsilon}} \cdot g_{\varepsilon} \ge R_{\varepsilon_0} \cdot g_{\varepsilon_0}$ on $T\mathbb{S}^3$ for all $\varepsilon_0 \le \varepsilon < 4$. 
        Moreover, from {\cite[Example 2]{shin1994examples}}, if $\varepsilon \le 2$ (resp. $\varepsilon < 2$), then $g_{\varepsilon}$ is a (resp. unique) Yamabe metric with positive scalar curvature. 
        On the other hand, if $\varepsilon \le 0$, then $g_{\varepsilon}$ has a non-positive constant scalar curvature and satisfies the assumptions of Theorems \ref{theo-1}, \ref{theo-2} and \ref{theo-3} but does not one of Theorem \ref{theo-4}.
        On the other hand, there is $\varepsilon_0 \in \left( \frac{11}{2}, \frac{17}{3} \right)$, it holds that
        \[
       \| \overset{\circ}{\mathrm{Ric}}_{g_{\varepsilon}} \|_{L^2}\, g_{\varepsilon} - R_{g_{\varepsilon}}\, \overset{\circ}{\mathrm{Ric}}_{g_{\varepsilon}} > 0~~\mathrm{on}~M
        \]
        for all $\varepsilon < \varepsilon_0$.
        Hence, if $g_{\varepsilon}$ with $2 \le \varepsilon < \varepsilon_0$ is a unique Yamabe metric (up to rescaling) in its conformal class\footnote{Since $Y(\mathbb{S}^3, [g_{\varepsilon}]) = \varepsilon \left( 4 - \frac{\varepsilon}{2} \right)^{1/3} \mathrm{Vol}(\mathbb{S}^3, g_{std})^{2/3} < 6\, \mathrm{Vol}(\mathbb{S}^3, g_{std})^{2/3} = Y(\mathbb{S}^3, [g_{6}] = [g_{std}])$, $(M, [g_{\varepsilon}])$ is not conformally equivalent to the standard sphere.}, then such a metric $g_{\varepsilon}$ can be deformed in the csc direction so that neither Type I nor II rigidities hold by Theorem \ref{theo-cscdeformation}.
    \item Any left invariant metric on the Heisenberg group 
    \[
    \left\{ \begin{bmatrix} 
    1 &* &* \\
    0 &1 &* \\
    0 &0 &1
    \end{bmatrix} \in \mathbb{M}(\mathbb{R}, 3) 
    \right\},
    \]
    whose signature of Ricci quadratic form is $(+, -, -)$.
    \item A left invariant metric on $SL(2, \mathbb{R})$ or $E(1,1)$, whose signature of Ricci quadratic form can be either $(+, -, -)$ or $(0, 0, -)$ depending on the choice of 
 the left invariant metric. 
\end{itemize}
For each metric with constant negative scalar curvature in the above example, from \cite{koiso1979decomposition}, 
\[
\mathcal{C}_{1} := \{ g \in \mathcal{M} \mid~R_{g} = \mathrm{const},~\mathrm{Vol}(M, g)=1 \}
\]
is a (infinite dimensional) manifold near such a metric (after normalizing it so that it has unit volume). 
Here, $\mathcal{M}$ is the space of all Riemannian metrics on each manifold $M$.
Moreover, the condition that 
\[
\| \overset{\circ}{\mathrm{Ric}_{g}} \|_{g}(x) \neq 0~\mathrm{for~all}~x \in M
\]
is an open condition with respect to the $C^{\infty}$-topology on $\mathcal{M}$.
Therefore, all metrics in $\mathcal{C}_{1}$ sufficiently $C^{\infty}$-close to such a metric also satisfy the assumption of Theorem \ref{theo-1}.

\subsection{Examples on the total spaces of Riemannian submersions with totally geodesic fibers}\label{subsection-submersions}
\begin{itemize}
    \item ({\cite[Section 5]{matsuzawa1983einstein}} or {\cite[9.82 Example 2]{besse2007einstein}})~Let $\pi : S^{4n+3} \rightarrow \mathbb{H}P^{n}$ be the Hopf fibration whose fibers are the standard unit $3$-sphere $\mathbb{S}^{3}$.
    We denote the scalar curvatures of the fibers, the base space and the total space by $R^{F}, R^{B}$ and $R^{M}$, respectively.
    Then, 
    \[
    R^{F} = 6,~R^{B} = 16n(n+2),~R^{M} = (4n+3)(4n+2).
    \]
    The Ricci curvature of the canonical variation $g_{t}~(t > 0)$ was calculated in \cite{matsuzawa1983einstein} as follows.
    \[
    \begin{split}
        \mathrm{Ric}^{t}(U,V) &= \left( \frac{R^{F}}{t\,  \mathrm{dim}F} + t \left( \frac{R^{M}}{\mathrm{dim}F + \mathrm{dim}B} - \frac{R^{F}}{\mathrm{dim}F} \right) \right)\, g_{t}(U, V), \\
        \mathrm{Ric}^{t}(X,Y) &= \left( \frac{R^{B}}{\mathrm{dim}B} + t \left( \frac{R^{M}}{\mathrm{dim}F + \mathrm{dim}B} - \frac{R^{B}}{\mathrm{dim}B} \right) \right)\, g_{t}(X, Y),
    \end{split}
    \]
    where $U, V$ are vertical vectors and $X, Y$ are horizontal vectors.
    So, in this example, 
    \[
    \mathrm{Ric}^{t}(U,V) = \left( \frac{2}{t} + 4nt \right)\, g_{t}(U, V), ~
        \mathrm{Ric}^{t}(X,Y) = \left( 4(n+2) -6t  \right)\, g_{t}(X, Y)
    \]
    and 
    \[
    R_{g_{t}} = \frac{6}{t} -12nt +16n(n+2).
    \]
    Therefore, 
    \[
    \begin{split}
    \| \overset{\circ}{\mathrm{Ric}_{g_{t}}} \|^{2}_{g_{t}} &= 3 \left( \frac{2}{t} + 4nt -\frac{1}{4n+3}\left( \frac{6}{t} -12nt +16n(n+2) \right) \right)^{2} \\
    &+ 4n \left( 4(n+2) -6t -\frac{1}{4n+3}\left( \frac{6}{t} -12nt +16n(n+2) \right) \right)^{2}\\
    &= 3 \left( \frac{8n}{4n+3} \cdot \frac{1}{t} + \frac{4n(4n+6)}{4n+3} t - \frac{16n(n+2)}{4n+3} \right)^{2} \\
&+ 4n \left( \frac{6}{4n+3} \cdot \frac{1}{t} + \frac{12n+18}{4n+3} t - \frac{12(n+2)}{4n+3} \right)^{2}.
    \end{split}
    \]
    Let $\lambda^{V}_{Ric_{t}}$ and $\lambda^{H}_{Ric_{t}}$ be the eigenvalues of $\mathrm{Ric}_{g_{t}}$ in the vertical and horizontal directions, respectively.
    One can observe that 
    \[
    \begin{cases}
        \lambda_{Ric_{t}}^{V} \ge \lambda_{Ric_{t}}^{H} &0 < t \le \frac{1}{2n+3}~\mathrm{or}~t \ge 1 \\
         \lambda_{Ric_{t}}^{H} \ge \lambda_{Ric_{t}}^{V} &\frac{1}{2n+3} \le t \le 1.
    \end{cases}
    \]
    Matsuzawa \cite{matsuzawa1983einstein} observed that the canonical variation $g_{t}$ is an Einstein metric on $\mathbb{S}^{4n+3}$ if and only if $t =1$ or $t = \frac{1}{2n+3}$.
    Then, we have
    \[
    \begin{split}
        &R_{g_{t}} \left( \lambda_{Ric_{t}}^{V} - \frac{R_{g_{t}}}{\mathrm{dim}M} \right) \\
        &= \left( \frac{6}{t} -12nt + 16n(n+2) \right)\left( \frac{8n}{4n+3} \cdot \frac{1}{t} + \frac{4n(4n+6)}{4n+3} t - \frac{16n(n+2)}{4n+3} \right), \\
         &R_{g_{t}} \left( \lambda_{Ric_{t}}^{H} - \frac{R_{g_{t}}}{\mathrm{dim}M} \right) \\
         &= \left( \frac{6}{t} -12nt + 16n(n+2) \right) \left( \frac{6}{4n+3} \cdot \frac{1}{t} + \frac{12n+18}{4n+3} t - \frac{12(n+2)}{4n+3} \right).
    \end{split}
    \]
    And, 
    \[
    \begin{split}
       &\| \overset{\circ}{\mathrm{Ric}_{g_{t}}} \|^{2}_{g_{t}} -  R_{g_{t}} \left( \lambda_{Ric_{t}}^{V} - \frac{R_{g_{t}}}{\mathrm{dim}M} \right) = \frac{16n}{t}((2n+3)t - 1)(t-1)(3t - 2(n+2)), \\
       &\| \overset{\circ}{\mathrm{Ric}_{g_{t}}} \|^{2}_{g_{t}} -  R_{g_{t}} \left( \lambda_{Ric_{t}}^{H} - \frac{R_{g_{t}}}{\mathrm{dim}M} \right) \\
       &= \frac{12}{(4n+3)t^{2}} ((2n+3)t -1)(t-1)\left( (8n^{2} + 18n)t^{2} - 16n(n+2)t + 4n-3 \right).
    \end{split}
    \]
    Hence, if $t_{-} < t < \frac{1}{2n+3}$, $\frac{1}{2n+3} < t < 1$, $1 < t < t_{+}$ or $t > \frac{2}{3}(n+2)$,  then 
    \[
    \| \overset{\circ}{\mathrm{Ric}_{g_{t}}} \|_{g_{t}}^{2} \cdot g_{t} - R_{g_{t}} \left( \mathrm{Ric}_{g_{t}} - \frac{R_{g_{t}}}{\mathrm{dim}M} \cdot g_{t} \right)
    \]
    is positive or negative definite on $M$.
    Here, $t_{\pm}$ are the real roots of the equation: $(8n^{2} + 18n)t^{2} - 16n(n+2)t + 4n-3 = 0$.
    Note that $0 < t_{-} < \frac{1}{2n+3}$ and $1 < t_{+} < \frac{2}{3}(n+2)$.
    Therefore, $(\mathbb{S}^{4n+3}, g_{t})$ satisfies the assumption of Theorems \ref{theo-1}, \ref{theo-2}, \ref{theo-3} and \ref{theo-4} if $t_{-} < t < \frac{1}{2n+3}$, $\frac{1}{2n+3} < t < 1$, $1 < t < t_{+}$ or $t > \frac{2}{3}(n+2)$, and satisfies the assumptions of Theorems \ref{theo-1} and \ref{theo-2} if $0 < t \neq t_{\pm}, \frac{1}{2n+3}, 1, \frac{2(n+2)}{3}$.
    Moreover, $g_{t}$ is a Yamabe metric if $t \ge \frac{4n+5}{3}$.

    On the other hand, one can directly check that for any $t_0 > 2(n+2)/3$, $R_{g_t} \cdot g_t \ge R_{t_0} \cdot g_{t_0}$ on $T \mathbb{S}^{4n+3}$ for all $t \in [2(n+2)/3,  t_0]$.
    \item ({\cite[Section 5]{matsuzawa1983einstein}} or {\cite[9.83 Example 3]{besse2007einstein}})~Consider the fibration $\mathbb{C}P^{2n+1} \rightarrow \mathbb{H}P^{n}$ whose fibers are $S^{2}(4)$ with constant sectional curvature $4$.
    Then
     \[
    R^{F} = 8,~R^{B} = 16n(n+2),~R^{M} = (4n+4)(4n+2).
    \]
    Let $g_{t}$ be the canonical variation of this Riemannian submersion.
    Then, as in the previous example, one can calculate as
    \[
    \mathrm{Ric}^{t}(U,V) = \left( \frac{4}{t} + 4nt \right)\, g_{t}(U, V), ~
        \mathrm{Ric}^{t}(X,Y) = \left( 4(n+2) -4t  \right)\, g_{t}(X, Y)
    \]
    and 
    \[
    R_{g_{t}} = \frac{8}{t} -8nt +16n(n+2).
    \]
    Therefore, 
    \[
    \begin{split}
    \| \overset{\circ}{\mathrm{Ric}_{g_{t}}} \|^{2}_{g_{t}} &= 2 \left( \frac{4}{t} + 4nt -\frac{1}{4n+2}\left( \frac{8}{t} -8nt +16n(n+2) \right) \right)^{2} \\
    &+ 4n \left( 4(n+2) -4t -\frac{1}{4n+2}\left( \frac{8}{t} -8nt +16n(n+2) \right) \right)^{2}\\
    &= 2 \left( \frac{8n}{2n+1} \cdot \frac{1}{t} + \frac{2n(4n+4)}{2n+1} t - \frac{8n(n+2)}{2n+1} \right)^{2} \\
&+ 4n \left( \frac{4}{2n+1} \cdot \frac{1}{t} + \frac{4(n+1)}{2n+1} t - \frac{4(n+2)}{2n+1} \right)^{2}.
    \end{split}
    \]
    Let $\lambda^{V}_{Ric_{t}}$ and $\lambda^{H}_{Ric_{t}}$ be the eigenvalues of $\mathrm{Ric}_{g_{t}}$ in the vertical and horizontal directions, respectively.
    One can observe that 
    \[
    \begin{cases}
        \lambda_{Ric_{t}}^{V} \ge \lambda_{Ric_{t}}^{H} &0 < t \le \frac{1}{n+1}~\mathrm{or}~t \ge 1 \\
         \lambda_{Ric_{t}}^{H} \ge \lambda_{Ric_{t}}^{V} &\frac{1}{n+1} \le t \le 1.
    \end{cases}
    \]
    Matsuzawa \cite{matsuzawa1983einstein} observed that the canonical variation $g_{t}$ is an Einstein metric on $\mathbb{C}P^{2n+1}$ if and only if $t =1$ or $t = \frac{1}{n+1}$.
    Then, we have
    \[
    \begin{split}
        &R_{g_{t}} \left( \lambda_{Ric_{t}}^{V} - \frac{R_{g_{t}}}{\mathrm{dim}M} \right) \\
        &= \left( \frac{8}{t} -8nt + 16n(n+2) \right) \left( \frac{8n}{2n+1} \cdot \frac{1}{t} + \frac{2n(4n+4)}{2n+1} t - \frac{8n(n+2)}{2n+1} \right), \\
         &R_{g_{t}} \left( \lambda_{Ric_{t}}^{H} - \frac{R_{g_{t}}}{\mathrm{dim}M} \right) \\
         &= \left( \frac{8}{t} -8nt + 16n(n+2) \right) \left( \frac{4}{2n+1} \cdot \frac{1}{t} + \frac{4(n+1)}{2n+1} t - \frac{4(n+2)}{2n+1} \right).
    \end{split}
    \]
    And, 
    \[
\begin{split}
   &\| \overset{\circ}{\mathrm{Ric}_{g_{t}}} \|^{2}_{g_{t}} - R_{g_{t}} \left( \lambda_{Ric_{t}}^{V} - \frac{R_{g_{t}}}{\mathrm{dim}M} \right) 
   = \frac{64 n}{t^{2}} (t-n-2)((n+1)t - 1)(t-1), \\
   &\| \overset{\circ}{\mathrm{Ric}_{g_{t}}} \|^{2}_{g_{t}} - R_{g_{t}} \left( \lambda_{Ric_{t}}^{H} - \frac{R_{g_{t}}}{\mathrm{dim}M} \right) \\
   &\quad= \frac{32}{(2n+1)t^{2}} ((n+1)t -1)(t-1)\left( n(2n+3)t^{2} -4n(n+2)t +2n-1 \right).
\end{split}
    \]
    Hence, if $t_{-} < t < \frac{1}{n+1}$, $\frac{1}{n+1} < t < 1$, $1 < t < t_{+}$, or $t > n+2$, then 
    \[
    \| \overset{\circ}{\mathrm{Ric}_{g_{t}}} \|_{g_{t}}^{2} \cdot g_{t} - R_{g_{t}} \left( \mathrm{Ric}_{g_{t}} - \frac{R_{g_{t}}}{\mathrm{dim}M} \cdot g_{t} \right)
    \]
    is positive or negative definite on $M$.
Here, $t_{\pm}$ are the real roots of the equation: $n(2n+3)t^{2} -4n(n+2)t +2n-1 = 0$.
Note that $0 < t_{-} < \frac{1}{n+1}$ and $1 < t_{+} < n+2$.
Therefore, $(\mathbb{C}P^{2n+1}, g_{t})$ satisfies the assumption of Theorems \ref{theo-1}, \ref{theo-2}, \ref{theo-3} and \ref{theo-4}, and satisfies the assumptions of Theorems \ref{theo-1} and \ref{theo-2} if $0 < t \neq t_{\pm}, \frac{1}{n+1}, 1, n+2$. 
    Moreover, $g_{t}$ is a Yamabe metric if $t \ge 2n+3$.

    On the other hand, one can directly check that for any $t_0 > n+2$, $R_{g_t} \cdot g_t \ge R_{g_{t_0}} \cdot g_{t_0}$ on $T \mathbb{C}P^{2n+1}$ for all $t \in [n+2, t_0]$.

    \item ({\cite[9.84 Example 4]{besse2007einstein}})
    Let $\pi : \mathbb{S}^{15} \rightarrow \mathbb{S}^{8}$ be the Hopf fibratioin with fiber $\mathbb{S}^{7}$.
    Then, the metric $g$ on the total space $\mathbb{S}^{15}$ and $g^{F}$ on the fiber $\mathbb{S}^{7}$ have constant curvature $1$, and the metric $g^{B}$ on the base has constant curvature $4$. Hence, we have 
    \[
    R^{F} = R_{g_{\mathbb{S}^{7}(1)}} = 42, R^{M} = R_{g_{\mathbb{S}^{15}(1)}} = 210,
    R^{B} = R_{g_{\mathbb{S}^{8}(1/2)}} = 4 \cdot 56 = 224.
    \]
    And, let $g_{t}~(t > 0)$ be the canonical variation of $g$, then for vertical vectors $U, V$ and horizontal vectors $X, Y$, 
    \[
    \begin{split}
        \mathrm{Ric}_{g_{t}} (U, V) &= \left( \frac{6}{t} + 8t \right) \langle U, V \rangle_{t}, \\
        \mathrm{Ric}_{g_{t}} (X, Y) &= (28 - 14t) \langle X, Y \rangle_{t}.
    \end{split}
    \]
    Then, one can calculate as
    \[
    \begin{split}
    R_{g_{t}} &= \frac{42}{t} -56t + 224, \\
    \| \overset{\circ}{\mathrm{Ric}_{g_{t}}} \|^{2}_{g_{t}} &= \left( 7 \left( \frac{16}{15} \right)^{2} + 8 \left( \frac{14}{15} \right)^{2} \right) \frac{1}{t^{2}} (11t - 3)^{2} (t-1)^{2}, \\
    R_{g_{t}} \left( \lambda^{V}_{t} - \frac{R_{g_{t}}}{15} \right) &= \frac{14 \cdot 16}{15t^{2}} (3 + 6t -4t^{2})(11t-3)(t-1), \\
    R_{g_{t}} \left( \lambda^{H}_{t} - \frac{R_{g_{t}}}{15} \right) &= -\frac{14^{2}}{15 t^{2}} (3 + 6t -4t^{2})(11t-3)(t-1).
    \end{split}
    \]
    Moreover, 
    \[
    \begin{cases}
        \lambda^{V}_{t} \ge \lambda^{H}_{t} &t \le \frac{3}{11}~\mathrm{or}~t \ge 1 \\
        \lambda^{H}_{t} \ge \lambda^{V}_{t} &\frac{3}{11} \le t \le 1,
    \end{cases}
    \]
    and $(\mathbb{S}^{15}, g_{t})$ is Einstein if and only if $t = 1$ or $\frac{3}{11}$.
    On the other hand,
     \[
    \begin{split}
\| \overset{\circ}{\mathrm{Ric}_{g_{t}}} \|^{2}_{g_{t}} - R_{g_{t}} \left( \lambda^{V}_{t} - \frac{R_{g_{t}}}{15} \right) 
&= \frac{14 \cdot 16}{3} \frac{1}{t} \left( 3t-4 \right) (11t-3)(t-1), \\
\| \overset{\circ}{\mathrm{Ric}_{g_{t}}} \|^{2}_{g_{t}} - R_{g_{t}} \left( \lambda^{H}_{t} - \frac{R_{g_{t}}}{15} \right)
&= \frac{14 \cdot 2}{45} \frac{92}{t^2} \left( \left( t - \frac{49}{92} \right)^2 + \frac{7811}{92} \right) (11t - 3)(t-1).
\end{split}
    \]
    Therefore, $(\mathbb{S}^{15}, g_{t})$ satisfies the assumptions of Theorems \ref{theo-1} and \ref{theo-2} if $0 < t \neq \frac{3}{11}, 1, \frac{4}{3}$, and satisfies the assumptions of Theorems \ref{theo-1}, \ref{theo-2}, \ref{theo-3} and \ref{theo-4} if $t > 4/3$.
    Moreover, $g_{t}$ is a Yamabe metric if $t \ge 3$.

    On the other hand, one can directly check that for any $t_0 > 2$, $R_{t} \cdot g_t \ge R_{g_{t_0}} \cdot g_{t_0}$ on $T \mathbb{S}^{15}$ for all $t \in [2, t_0]$.
\end{itemize}

All of the manifolds in the above examples have positive Yamabe invariant.
\begin{ques}
    Are there any manifolds of non-positive Yamabe invariants that satisfies the assumptions of Theorems \ref{theo-1}, \ref{theo-2}, \ref{theo-3} and \ref{theo-4}?
\end{ques}
\subsection{Unstable csc metrics}\label{subsection-unstable}
\begin{itemize}
    \item (On $\mathbb{S}^{1} \times \mathbb{S}^{n-1}$)
Consider $(\mathbb{S}^{1} \times \mathbb{S}^{n-1}, g_{r} = r^{2} dt^{2} + g_{std})~(r > 0)$.
   Then, $\lambda_{1}(g_{r}) = \min \{ r^{-2}, n-1 \}$ and $R_{g_{r}} = (n-1)(n-2)$.
   So, $g_{r}$ is stable csc metric iff $r^{-1} \ge \sqrt{n-2}$. And we have
   \[
   (\overset{\circ}{\mathrm{Ric}}_{g_{r}})_{ij} =
   \begin{cases}
     -\frac{(n-1)(n-2)}{n} r^{2} dt^{2} & i=j=0 \\
     \frac{(n-2)}{n} (g_{std})_{ij} &1 \le i,j \le n-1 \\
     0&\mathrm{otherwise}.
   \end{cases}
   \]
   Here, $g_{std}$ denotes the standard spherical metric on $\mathbb{S}^{n-1}$ with constant curvature $1$.
   Hence, $\| \overset{\circ}{\mathrm{Ric}}_{g_{r}} \|^{2}_{g_{r}} = \frac{(n-1)(n-2)^{2}}{n}$,
   $\max \overset{\circ}{\mathrm{Ric}}_{g_{r}} = \frac{n-2}{n}$.
   Therefore, $\|\overset{\circ}{\mathrm{Ric}}_{g_{r}} \|^{2}_{g_{r}} + R_{g_{r}} \cdot \max \overset{\circ}{\mathrm{Ric}}_{g_{r}} = \frac{2(n-1)(n-2)^2}{n} > 0$ for all $r > 0$.
   Hence, $(\mathbb{S}^1 \times \mathbb{S}^{n-1}, g_r)~(r > 0)$ satisfies all the assumptions of Theorems  \ref{theo-1} and \ref{theo-2}.
   Moreover, from \cite{kobayashi1985large, schoen1987variational}, $g_r$ is the unique csc metric in its conformal class (up to rescaling) if $r^{-1} \ge \sqrt{n-2}$. Hence, by Theorem \ref{theo-cscdeformation}, $(\mathbb{S}^1 \times \mathbb{S}^{n-1}, g_r)~(0 < r \le 1/\sqrt{n-2})$ can be deformed in the csc direction so that neither Type I nor II rigidities hold.
   
   On the other hand, 
   \[
\| \overset{\circ}{\mathrm{Ric}}_{g_r} \|_{L^2}\, g_r - R_{g_r}\, \overset{\circ}{\mathrm{Ric}}_{g_{r}} > 0~~\mathrm{on}~M
   \]
   if and only if 
   \begin{equation}\label{eq-cscassumption}
   r > \frac{1}{2\pi\, \mathrm{Vol}(\mathbb{S}^{n-1}, g_{std})} = \frac{\Gamma \left( \frac{n-1}{2} \right)}{4 \pi^{\frac{n+1}{2}}}.
   \end{equation}
   As a result, if
   \[
    \frac{\Gamma \left( \frac{n-1}{2} \right)}{4 \pi^{\frac{n+1}{2}}} < r \le \frac{1}{\sqrt{n-2}},~n \ge 3
   \]
   then $(\mathbb{S}^1 \times \mathbb{S}^{n-1}, g_r)$ can be deformed in the csc direction so that Type III rigidity does not hold.
    \item (On $\mathbb{S}^{2} \times \mathbb{S}^{2}$)
    Consider $(\mathbb{S}^{2} \times \mathbb{S}^{2}, g_{r} = r^{2} g_{std} + g_{std})$.
    $\lambda_{1} (g_{r}) = \min \{ 2r^{-2}, 2 \}$, $\frac{R_{g_{r}}}{n-1} = \frac{2r^{-2} + 2}{3}$.
    So, $g_{r}$ is stable csc iff $1/\sqrt{2} \le r \le \sqrt{2}$.
    And we have
    \[
   (\overset{\circ}{\mathrm{Ric}}_{g})_{ij} =
   \begin{cases}
     \left( \frac{1}{2} - \frac{r^{2}}{2} \right) (g_{std})_{ij} & i,j = 1,2 \\
     \left( \frac{1}{2} - \frac{r^{-2}}{2} \right) (g_{std})_{ij} & i.j = 3,4 .
   \end{cases}
   \]
   Here, $g_{std}$ denotes the standard spherical metric on $\mathbb{S}^{2}$ with constant curvature $1$.
   Hence, $\| \overset{\circ}{\mathrm{Ric}}_{g_{r}} \|^{2}_{g_{r}} = (1-r^{-2})^{2}$, $R_{g_{r}} = 2r^{-2} + 2$ 
   and $\max \overset{\circ}{\mathrm{Ric}}_{g_{r}} = \max \left\{ \frac{r^{-2}}{2} - \frac{1}{2}, \frac{1}{2} - \frac{r^{-2}}{2} \right\}$.
   Therefore, 
   \[
   \| \overset{\circ}{\mathrm{Ric}}_{g_{r}} \|_{g_{r}}^{2} + R_{g_{r}} \cdot \max \overset{\circ}{\mathrm{Ric}}_{g_{r}} =
   2r^{-4}(1-r^2)
   \begin{cases}
       > 0 &(r < 1) \\
       < 0 &(r > 1).
   \end{cases}
   \]
\end{itemize}
Hence, $(\mathbb{S}^2 \times \mathbb{S}^2, g_r)~(r \neq 1)$ satisfies all the assumptions of Theorems \ref{theo-1} and \ref{theo-2}.

\noindent
The Einstein metric $g_1$ is not conformally equivalent to $(\mathbb{S}^4, [g_{std}])$, hence Obata's theorem implies that $g_1$ is the unique csc metric in its conformal class.
Thus, from \cite{bohm2004variational}, any csc metric sufficiently close to $g_1$ is also the unique csc metric in its conformal class.
Hence, $(\mathbb{S}^2 \times \mathbb{S}^2, g_r)$ where $r \neq 1$ is sufficiently close to $1$ can be deformed in the csc direction so that neither Type I nor II rigidities hold.
On the other hand, 
\[
\| \overset{\circ}{\mathrm{Ric}}_{g_r} \|_{L^2}\, g_r - R_{g_r}\, \overset{\circ}{\mathrm{Ric}}_{g_{r}} > 0~~\mathrm{on}~M
\]
if and only if
\[
r \ge 1,~r^{-4} + 16 \pi^2 r^{-3} - 32 \pi^2 r^{-1} + 16 \pi^2 r - 1 > 0,
\]
or 
\[
r \le 1,~-r^{-4} + 16 \pi^2 r^{-3} + 16 \pi^2 r + 1 > 0.
\]
One can easily check that $r = \sqrt{2}, \frac{1}{\sqrt{2}}$ satisfy these conditions, respectively.
Hence, these also hold for $r$ that is sufficiently close to those values.
Therefore, if $g_r$ where $r \ge \frac{1}{\sqrt{2}}$ (resp. $r \le \sqrt{2}$) is sufficiently close to $\frac{1}{\sqrt{2}}$ (resp. $\sqrt{2}$) is a unique Yamabe metric (up to rescaling) in its conformal class, then such a metric $g_r$ can be deformed in the csc direction so that Type III rigidity does not hold by Theorem \ref{theo-cscdeformation}.

\section{Conclusion}\label{section-conclusion}
\subsection{Other rigidity results}
B\"{a}r \cite{bear2024dirac} recently proved the following rigidity result.
\begin{theo}[{\cite[Main Theorem]{bear2024dirac}}]
    Let $(M, g)$ be a connected closed Riemannian spin manifold of dimension $ \ge 2$ and $D$ the Dirac operator acting on spinor fields of $M$.
    Then
    \begin{equation}\label{eq-hyperspherical}
    \lambda_{1}(D^{2}) \le \frac{n^{2}}{4\, \mathrm{Rad}_{\mathbb{S}^{n}} (M,d_{g})}.
    \end{equation}
    Equality holds in (\ref{eq-hyperspherical}) if and only if $(M, g)$ is isometric to $(\mathbb{S}^{n}(R), g_{std})$ with $R = \mathrm{Rad}_{\mathbb{S}^{n}}(M, d_{g})$.
\end{theo}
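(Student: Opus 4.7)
The plan is to follow the spin geometric strategy pioneered by Llarull and Goette–Semmelmann (and used by Listing), but adapted to the eigenvalue problem for $D^2$ by exploiting \emph{Killing spinors on the target sphere}. By the definition of $\mathrm{Rad}_{\mathbb{S}^n}(M,d_g)$, for every $R$ strictly smaller than this hyperspherical radius there exists a smooth distance non-increasing (spin) map $f\colon M\to \mathbb{S}^n(R)$ of non-zero degree. It therefore suffices to establish the inequality for every such $R$ and then let $R$ tend to $\mathrm{Rad}_{\mathbb{S}^n}(M,d_g)$.

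First I would introduce the twisted spinor bundle $\Sigma M\otimes f^{*}\Sigma \mathbb{S}^n(R)$ (using half-spinor bundles if $n$ is even) together with its twisted Dirac operator $D^{\mathrm{tw}}$, and observe that the standard Lichnerowicz–Weitzenböck formula takes the shape
\[
(D^{\mathrm{tw}})^{2}=\nabla^{*}\nabla+\tfrac{R_{g}}{4}+\mathcal{R}^{f^{*}\Sigma},
\]
where the twist curvature $\mathcal{R}^{f^{*}\Sigma}$ is a contraction of the Riemann tensor of $\mathbb{S}^n(R)$ pulled back along $f$. Since $\mathbb{S}^n(R)$ has constant sectional curvature $1/R^{2}$ and $f$ is $1$-Lipschitz, one gets a pointwise upper bound on $\mathcal{R}^{f^{*}\Sigma}$ of the same shape as the curvature term encountered on the round sphere itself.

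Next, I would use the Killing spinor $\phi$ on $\mathbb{S}^n(R)$ (with Killing constant $1/(2R)$, eigenvalue $\pm n/(2R)$ of $D_{\mathbb{S}^n(R)}$) as a ``test tensor'' to estimate $\lambda_{1}(D_{M}^{2})$. The route is to combine an eigenspinor $\psi$ of $D_{M}$ with eigenvalue $\lambda$ against $f^{*}\phi$ inside the twisted bundle, compute $|D^{\mathrm{tw}}(\psi\otimes f^{*}\phi)|^{2}$ using the (almost-)Killing equation satisfied by $f^{*}\phi$ and the Weitzenböck identity, and integrate over $M$. The Lipschitz bound on $f$ controls the error between $f^{*}\phi$ being genuinely Killing and the pulled-back Killing equation, and rearranging yields the sought inequality between $\lambda_{1}(D^{2})$ and the radius $R$. (Equivalently one can appeal to an index/cokernel argument: the twisted operator has non-trivial kernel by Atiyah–Singer since $\deg f\neq 0$, and the resulting harmonic spinor provides the Rayleigh comparison.)

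The hard step will be the rigidity statement. One must trace equality through all the inequalities above: equality in the pointwise Weitzenböck estimate forces $|df|$ to be pointwise maximal, so $f$ is a local isometry; equality in the spinor Cauchy–Schwarz forces the lowest eigenspinor of $D_{M}^{2}$ to be a genuine Killing spinor on $M$ with constant $1/(2R)$. By Bär's classification of manifolds admitting real Killing spinors, a closed Riemannian spin manifold carrying such a spinor has constant sectional curvature $1/R^{2}$, hence its universal cover is $\mathbb{S}^{n}(R)$; combining this with the existence of $f\colon M\to \mathbb{S}^n(R)$ of non-zero degree then forces $f$ itself to be an isometry and $M\cong \mathbb{S}^{n}(R)$ with $R=\mathrm{Rad}_{\mathbb{S}^n}(M,d_{g})$. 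The delicate point throughout is to ensure that in the limit $R\nearrow \mathrm{Rad}_{\mathbb{S}^n}(M,d_{g})$, the maps $f$ can be chosen to converge (after suitable normalization) to an honest comparison map so that the equality discussion is meaningful at the optimal radius.
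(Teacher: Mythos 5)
First, a point of orientation: this theorem is not proved in the paper at all --- it is quoted from B\"ar's preprint \cite{bear2024dirac} in the concluding section purely as motivation for some open questions, so there is no internal proof to measure your attempt against. Judged on its own terms, your proposal correctly identifies the right machinery, but only in the parenthetical: twist $\Sigma M$ by $f^{*}\Sigma\mathbb{S}^{n}(R)$, use Atiyah--Singer and $\deg f\neq 0$ to produce a harmonic twisted spinor $\Psi$, observe that the Killing connection $\nabla_{X}-\tfrac{1}{2R}X\cdot$ on $\Sigma\mathbb{S}^{n}(R)$ is flat (and trivial, as the sphere is simply connected), so that $D^{\mathrm{tw}}$ differs from an operator unitarily equivalent to $D\otimes\mathrm{id}_{\mathbb{C}^{N}}$ by a zeroth-order term of pointwise norm at most $n/(2R)$ by the Lipschitz bound; then $\|(D\otimes\mathrm{id})\Psi\|\le \tfrac{n}{2R}\|\Psi\|$ and min--max give the inequality. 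Your \emph{primary} route, by contrast, would fail: the Lichnerowicz term $R_{g}/4$ is irrelevant here (the theorem carries no scalar curvature hypothesis, and that term points the wrong way for an eigenvalue \emph{upper} bound), and tensoring an eigenspinor $\psi$ of $D_{M}$ with $f^{*}\phi$ produces a test section for the \emph{twisted} operator, which estimates the wrong spectrum. Note also that on dimensional grounds the right-hand side must be $n^{2}/\bigl(4\,\mathrm{Rad}_{\mathbb{S}^{n}}(M,d_{g})^{2}\bigr)$; the missing square is a typo in the quoted statement.

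Two genuine gaps remain in your rigidity discussion. (i) The appeal to the classification of real Killing spinors to conclude constant sectional curvature is incorrect: a closed spin manifold carrying \emph{one} real Killing spinor need not have constant curvature (Einstein--Sasaki, nearly K\"ahler $6$-manifolds and nearly parallel $G_{2}$ manifolds all occur in B\"ar's classification). What the equality analysis actually yields is that every singular value of $df$ equals one almost everywhere, i.e.\ $f$ is a local isometry onto $\mathbb{S}^{n}(R)$; since $M$ is closed and connected and $\mathbb{S}^{n}(R)$ is simply connected, $f$ is then a global isometry, with no spinor classification needed. (ii) The step you merely flag as ``delicate'' is the actual crux: the hyperspherical radius is a supremum, and the equality case must be analysed at the optimal radius, where one must first show the supremum is attained and then run the twisted-Dirac argument for a map that is a priori only Lipschitz (\`a la Cecchini--Hanke--Schick) or justify passage to the limit in all the pointwise equality identities. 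Finally, the index argument as you state it only produces a nonzero index in even dimensions; odd $n$ requires a separate device such as suspension. As written, the proposal establishes neither the inequality at the optimal radius nor the rigidity without these repairs.
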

\noindent
Here, $\mathrm{Rad}_{\mathbb{S}^{n}}(M, d_{g})$ is the \textit{hyperspherical radius} of $M$, which is defined as the supremum of all numbers $R > 0$ such that there exists a Lipschitz map $f : (M, f_{g}) \rightarrow (\mathbb{S}^{n}, d_{g_{std}})$
with Lipschitz constant $\mathrm{Lip}(f) \le 1/R$ and $\mathrm{deg}(f) \neq 0$.

As mentioned in subsection 3.1 in \cite{bear2024dirac}, if $\min_{M} R_{g} > 0$, then
\[
\mathrm{Rad}_{\mathbb{S}^{n}}(M ,d_{g})^{2} \le \frac{n(n-1)}{\min_{M} R_{g}},
\]
and equality holds if and only if $(M, g)$ is isometric to $(\mathbb{S}^{n}, g_{std})$.
This especially implies Llarull's rigidity theorem (\cite{llarull1998sharp, cecchini2024lipschitz, lee2022rigidity})\footnote{See the remarks immediately following the Theorem 1 in \cite{bear2024dirac}.}.
Note that for any Lipschitz map $f : M \rightarrow \mathbb{S}^{n}$ with $\mathrm{deg}(f) \neq 0$, 
\[
\mathrm{Lip}(f) \ge \frac{1}{\mathrm{Rad}_{\mathbb{S}^{n}}(M, d_{g})}.
\]
Hence, for any Lipschitz map $f : M \rightarrow \mathbb{S}^{n}$ with $\mathrm{deg}(f) \neq 0$, 
\begin{equation}\label{eq-lip-scal}
    \frac{R_{g}}{\mathrm{Lip}(f)^{2}} \ge \frac{R_{g_{std}} \circ f}{\mathrm{Lip}(\mathrm{id}_{\mathbb{S}^{n}})^{2}} = n(n-1)~~\mathrm{on}~M
\end{equation}
implies $f : (M, d_{g}) \rightarrow (\mathbb{S}^{n}, g_{std})$ is an isometry.
Here, $\mathrm{id}_{\mathbb{S}^{n}} : \mathbb{S}^{n} \rightarrow \mathbb{S}^{n}$ denotes the identity map, hence $\mathrm{Lip}(\mathrm{id}_{\mathbb{S}^{n}}) = 1$.

\noindent
Motivated by this, we call a Riemanniam manifold $(M_{0}, g_{0})$ \textit{extremal in the sense of} (\ref{eq-lip-scal}) if 
\[
 \frac{R_{g}}{\mathrm{Lip}(f)^{2}} \ge R_{g_{0}} \circ f~~\mathrm{on}~M
\] 
for any Lipschitz map $f : M \rightarrow M_{0}$ with $\mathrm{deg}(f) \neq 0$ implies that $f : (M, d_{g}) \rightarrow (M_{0}, g_{0})$ is an isometry.
\begin{ques}
    What kinds of properties does extremal metric in the sense of (\ref{eq-lip-scal}) have?
    Can we find sufficient conditions for a metric not to be extremal in this sense, as in our main theorems?
\end{ques}

Similarly, from {\cite[Theorem 4]{bear2024dirac}}, for any Lipschitz map $f : M \rightarrow \mathbb{S}^{n}$ with $\mathrm{deg}(f) \neq 0$,
\begin{equation}\label{eq-yamabe-scal}
\frac{Y(M, [g])}{\mathrm{Lip}(f)^{2} \mathrm{Vol}(M, g)^{2/n}} \ge \frac{Y(\mathbb{S}^{n}, [g_{std}])}{\mathrm{Lip}(\mathrm{id}_{\mathbb{S}^{n}})^{2} \mathrm{Vol}(\mathbb{S}^{n}, g_{std})^{2/n}}
\end{equation}
implies that $(M, g)$ is isometric to $(\mathbb{S}^{n}, g_{std})$.
Here, $Y(M, [g])$ is the Yamabe constant of the conformal class $[g]$ on $M$ (see Remark \ref{rema-minscal} above).

\noindent
Motivated by this, we call a Riemanniam manifold $(M_{0}, g_{0})$ \textit{extremal in the sense of} (\ref{eq-yamabe-scal}) if 
\[
\frac{Y(M, [g])}{\mathrm{Lip}(f)^{2} \mathrm{Vol}(M, g)^{2/n}} \ge \frac{Y(M_{0}, [g_{0}])}{\mathrm{Vol}(M_{0}, g_{0})^{2/n}}
\] 
for any Lipschitz map $f : M \rightarrow M_{0}$ with $\mathrm{deg}(f) \neq 0$ implies that $(M, g)$ is isometric to $(M_{0}, g_{0})$.
\begin{ques}
    What kinds of properties does extremal metric in the sense of (\ref{eq-yamabe-scal}) have?
    Can we find sufficient conditions for a metric not to be extremal in this sense, as in our main theorems?
\end{ques}

\subsection{For singular metrics}\label{subsection-singular}
\begin{ques}
Let $M$ be a compact smooth manifold and $\Sigma \subset M$ is a closed subset.
Can we find a sufficient condition for a metric not to be scalar curvature rigid with a given ``boundary condition'' associated with $\Sigma$.
For example, 
\begin{itemize}
    \item $\Sigma = \partial M$ and the ``boundary condition'' is something involving the mean curvature of $\partial M$, or
    \item $\Sigma$ is an arbitrary closed subset and consider the set $\Sigma$ as the set of singular points of the metric
    in some sense.
    Here, the ``boundary condition'' is the decay of metric near the singular set $\Sigma$.
\end{itemize}

\end{ques}
Next, we mention that the scalar minimum functional $R_{min}$ (Section \ref{rema-minscal}) and a generalized definition of scalar curvature bounded below.
Let $M^{n}$ be a smooth closed $n$-manifold and $\kappa \in \mathbb{R}$ a constant.
Let $C^{0}_{met}(M, \kappa)$ denote the $C^{0}$-completion of $C^{2}$-metrics whose scalar curvature is bounded below by $\kappa$ in the conventional sense.
That is, a $C^{0}$-metric $g$ is in $C^{0}_{met}(M, \kappa)$ if and only if there exists a sequence of $C^{2}$-metrics $g_{i}$ on $M$ such that $g_{i}$ converges uniformly to $g$ and satisfies $R(g_{i}) \ge \kappa$. 
From the observation after Remark 1.10 of \cite{burkhardt2019pointwise} and Theorem 1.7 of the same paper, one can observe that
a $C^{0}$-metric $g$ is in $C^{0}_{met}(M, \kappa)$ if and only if 
\[
\limsup_{C^{2} \ni h \rightarrow g, C^{0}}  R_{min} (h) \ge \kappa.
\]
Here, ``$\limsup_{C^{2} \ni h \rightarrow g, C^{0}}  R_{min} (h) \ge \kappa$'' means that 
\[
\lim_{\delta \rightarrow 0} \left( \sup_{h \in \mathcal{M}^{2},~\| h -g \|_{C^{0}(M, g)} < \delta} R_{min}(h) \right) \ge \kappa,
\]
where $\mathcal{M}^{2}$ denotes the set of all $C^{2}$-metrics on $M$. 
Note that if 
\[\limsup_{C^{2} \ni h \rightarrow g, C^{0}}  R_{min} (h) := \alpha < + \infty,
\]
then 
this is equivalent to the property that for any $\varepsilon > 0$ there is $\delta > 0$ such that
\[
R_{min}(h) < \alpha + \varepsilon
\]
for all $h \in \mathcal{M}^{2}$ with $\| h - g \|_{C^{0}(M, g)} < \delta$
and there exists a sequence of $C^{2}$-metrics $h_{i}$ on $M$ such that $R_{min}(h_{i}) \rightarrow \alpha$ as $i \rightarrow \infty$.

Consider the following particular situation.
Let $M$ be a closed manifold with non-positive Yamabe invariant $Y(M) \le 0$.
Then, as mentioned in Remark \ref{rema-minscal}, 
\[
Y(M) = \sup_{h \in \mathcal{M}} R_{min}(h) \cdot \mathrm{Vol}(M, h)^{2/n}.
\]
Assume that there is a sequence of $C^{2}$-metrics $g_{i}$ on $M$ such that $g_{i} \overset{C^{0}}{\longrightarrow} g \in C^{0}$ and
     \begin{equation}\label{eq-nonpositive-yamabe}
         R_{min} (g_{i}) \cdot \mathrm{Vol}(M, g_{i})^{2/n} \rightarrow Y(M)\, (\le 0)~~\mathrm{as}~i \rightarrow \infty.
     \end{equation}
Then, $R_{min}(g_{i}) \rightarrow Y(M) \cdot \mathrm{Vol}(M, g)^{-2/n}$ as $i \rightarrow \infty$ and hence
\[
\limsup_{C^{2} \ni h \rightarrow g, C^{0}}  R_{min} (h) \ge \lim_{i \rightarrow \infty} R_{min}(g_{i}) = Y(M) \cdot \mathrm{Vol}(M, g)^{-2/n}.
\]
Therefore, 
$g \in C^{0}_{met}\left( M, Y(M) \cdot \mathrm{Vol}(M, g)^{-2/n} \right)$.
Of course, a typical example of $(g_{i})$ that satisfies only (\ref{eq-nonpositive-yamabe}) is a sequence of solutions to the Yamabe problem on each conformal class $[g_{i}]$.
In relation to this observation,
it is interesting to explore some relations between the variational properties of $R_{min}$ and singular Yamabe metrics or other extremal metrics in a topology weaker than $C^{2}$.

\bigskip
\noindent
As mentioned just after Theorem \ref{theo-main-unstable}, it is interesting to study Listing-types of rigidity theorems for the following types of metrics.

$M^n$ is a closed $n$-manifold, $K \subset M$ is a closed subset and it has a decomposition
\[
K = K^{n-1} \sqcup K^{n-2},
\]
where $K^{n-1}$ is a smooth $(n-1)$-submanifold of $M$ and $K^{n-2}$ is a closed countably $(n-2)$-rectifiable subset of $M$. 
And $g$ is a smooth Riemannian metric in $M \setminus K$.
(As a weaker version of this, we might consider the case where the metric is further Lipschitz continuous on the whole manifold $M$.)
Under this setting, the problem is:
Can such a metric $g$ be deformed in the direction that is transverse to the conformal direction?
More precisely, is there a family of metrics $\{ g_s \}_{|s| < \varepsilon}$ such that $g_0 = g$, each $g_s$ is smooth,  $\mathrm{tr}_g \frac{\partial}{\partial s}g_s = 0$ in $M \setminus K$, and
\[
R_{g_s} > R_g \cdot \| g \|_{1, g_{s}}~\mathrm{or}~R_{g_{s}} > R_g \cdot \| g \|_{2, g_{s}}~~\mathrm{in}~M?
\]

\smallskip
\noindent
Although several generalizations of the Llarull-types of rigidity theorem are known for several types of singular metrics \cite{chu2024llarull, lee2022rigidity}, (to the best of the author's knowledge,) Listing-type of rigidity for singular metrics is not yet known.

\subsection{About our assumptions}
\begin{ques}
    In our theorems \ref{theo-1}, \ref{theo-2}, \ref{theo-3} and \ref{theo-4}, can we weaken the assumption that ``$\| \overset{\circ}{\mathrm{Ric}_{g}} \|_{g}(x) \neq 0$ for all $x \in M$'' to that ``the metric $g$ is not an Einstein metric''?
\end{ques}
\begin{ques}
    Does every closed manifold $M$ of dimension $n \ge 3$ admit a metric $g$ with non-positive constant scalar curvature and $\| \overset{\circ}{\mathrm{Ric}_{g}} \|_{g}(x) \neq 0$ for all $x \in M$?
\end{ques}
\noindent
Several examples of manifolds that admit no Einstein metric are known.
On the other hand, every closed manifold of dimension $\ge 3$ admits a metric with constant negative scalar curvature.
Therefore, such a manifold of dimension $\ge 3$ always admits a metric of non-Einstein negative constant scalar curvature.
Moreover, Matsuo {\cite[Corollary 2]{matsuo2014prescribed}} proved that there exists a non-Ricci-flat scalar-flat metric on every closed manifold of dimension $\ge 3$ with positive Yamabe invariant.
However, one cannot distinguish whether the norm of the traceless Ricci tensor of each such metric has a positive lower bound on the whole manifold.

\smallskip
According to Theorem \ref{theo-3}, we propose the following problem.
\begin{prob}
    For a metric $g$ whose scalar curvature is constant, study the image of the map
    \[
    \mathcal{TT}_{g} \ni h \mapsto \langle \mathrm{Ric}_{g}, h \rangle_{g}\, g - \langle \mathrm{Ric}_{g}, g \rangle_{g}\, h \in \mathcal{S}^{2}TM
    \]
    where $\mathcal{TT}_{g}$ denotes the space of all $(0,2)$-tensors that are traceless and divergence-free with respect to $g$.
    In particular, investigate when it becomes positive or negative definite.
\end{prob}


\section{Appendix}\label{section-appendix}
Let $g$ and $\bar{g}$ be two Riemannian metrics on a $n$-manifold.
Set the difference between the Levi-Civita connections of $g$ and $\bar{g}$ as
\[
W := \nabla - \bar{\nabla}.
\]
Then $W$ is a tensor (unlike $\Gamma$).
With respect to a local frame $e_{1}, \cdots, e_{n}$, we can write the components of $W$ via
\[
(\nabla_{i} - \bar{\nabla}_{i})(e_{j}) = W^{k}_{ij} e_{k}.
\]
First, direct computations deduce the following two propositions.
\begin{prop}\label{prop-W}
In a local coordinates,
    \[
    W^{k}_{ij} = \frac{1}{2} g^{kl} (\bar{\nabla}_{i} g_{lj} + \bar{\nabla}_{j} g_{il} - \bar{\nabla}_{l} g_{ij}).
    \]
    Here, $\bar{\nabla}_{i} g_{jk}$ denotes the expression of $\nabla^{\bar{g}} g$ in terms of the local coordinates.
\end{prop}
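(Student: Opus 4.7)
The plan is to mimic the classical derivation of the Koszul/Christoffel formula for the Levi-Civita connection, but with $\bar{\nabla}$ playing the role of the flat coordinate derivative. Since $W := \nabla - \bar{\nabla}$ is a genuine tensor (the difference of two connections on the same vector bundle is tensorial), it suffices to prove the identity with respect to a local coordinate frame $(e_i)$.

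First I would record the symmetry $W^{k}_{ij} = W^{k}_{ji}$. This follows from the fact that both $\nabla$ and $\bar{\nabla}$ are torsion-free Levi-Civita connections, so for any vector fields $X, Y$ one has $\nabla_X Y - \nabla_Y X = [X,Y] = \bar\nabla_X Y - \bar\nabla_Y X$, which rearranges to $W(X,Y) = W(Y,X)$. In the coordinate frame this reads $W^k_{ij} = W^k_{ji}$.

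Next I would exploit metric compatibility. Since $\nabla$ is the Levi-Civita connection of $g$, we have $\nabla g \equiv 0$; expanding this using $\nabla = \bar\nabla + W$ yields, in local coordinates,
\[
0 \;=\; \nabla_i g_{jk} \;=\; \bar\nabla_i g_{jk} \;-\; W^{l}_{ij}\, g_{lk} \;-\; W^{l}_{ik}\, g_{jl}.
\]
Cyclically permuting $(i,j,k)$ produces two more such identities. Adding the equations labelled by $(i,j,k)$ and $(j,k,i)$, subtracting the one labelled by $(k,i,j)$, and using the lower-index symmetry $W^l_{ij} = W^l_{ji}$ to cancel the cross terms leaves
\[
2\, W^{l}_{ij}\, g_{lk} \;=\; \bar\nabla_{i} g_{jk} + \bar\nabla_{j} g_{ik} - \bar\nabla_{k} g_{ij}.
\]
Raising the index $k$ with $g^{kl}$ (and renaming dummy indices) yields the claimed formula.

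The only real issue is bookkeeping — keeping track of which indices are being permuted and ensuring the sign pattern comes out correctly after applying the symmetry in the lower slots of $W$. There is no analytic obstacle; the argument is purely algebraic, and the tensoriality of $W$ lets us work in whichever local frame is most convenient.
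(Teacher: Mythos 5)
Your argument is correct, and it reaches the formula by a genuinely different route than the paper. The paper's proof is a direct coordinate computation: it expands each $\bar{\nabla}_{i}g_{lj}$ as $\partial_{i}g_{lj}$ minus two $\bar{\Gamma}$-terms, sums the three permutations, and recognizes the resulting $\partial g$ combination as $2\Gamma^{k}_{ij}$ via the explicit Christoffel formula, the leftover $\bar{\Gamma}$-terms giving $-2\bar{\Gamma}^{k}_{ij}$. You instead run the intrinsic Koszul-type argument relative to $\bar{\nabla}$: metric compatibility $\nabla g=0$ rewritten as $\bar{\nabla}_{i}g_{jk}=W^{l}_{ij}g_{lk}+W^{l}_{ik}g_{jl}$, the symmetry $W^{k}_{ij}=W^{k}_{ji}$ coming from both connections being torsion-free, and the usual cyclic sum; the cross terms cancel exactly as you say, yielding $2W^{l}_{ij}g_{lk}=\bar{\nabla}_{i}g_{jk}+\bar{\nabla}_{j}g_{ik}-\bar{\nabla}_{k}g_{ij}$, which is the claim after raising the index (using $g_{jk}=g_{kj}$ to match the paper's index placement). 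Your version never invokes the explicit coordinate formula for $\Gamma^{k}_{ij}$ and makes visible exactly which structural facts are used (tensoriality of $W$, torsion-freeness, compatibility of $\nabla$ with $g$), so it generalizes verbatim to any torsion-free reference connection $\bar{\nabla}$, not just the Levi-Civita connection of a second metric; the paper's computation is more elementary but ties the identity to the Christoffel symbols of both metrics. Both are complete proofs.
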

\begin{proof}
    \[
\begin{split}
    \bar{\nabla}_{i} g_{lj} &= \partial_{i} g_{lj} - g_{pj} \bar{\Gamma}^{p}_{il} - g_{lp} \bar{\Gamma}^{p}_{ij} \\
    \bar{\nabla}_{j} g_{il} &= \partial_{j} g_{il} - g_{pl} \bar{\Gamma}^{p}_{ji} - g_{ip} \bar{\Gamma}^{p}_{jl} \\
    -\bar{\nabla}_{l} g_{ij} &= -\partial_{l} g_{ij} + g_{pj} \bar{\Gamma}^{p}_{il} + g_{ip} \bar{\Gamma}^{p}_{lj}.
    \end{split}
    \]
    Taking the sum of both sides, we get
    \[
g^{kl} (\bar{\nabla}_{i} g_{lj} + \bar{\nabla}_{j} g_{il} - \bar{\nabla}_{l} g_{ij}) = 2 \Gamma^{k}_{ij} - \delta^{k}_{p} \bar{\Gamma}^{p}_{ij} -\delta^{k}_{p} \bar{\Gamma}^{p}_{ji}
= 2 \Gamma^{k}_{ij} - 2 \bar{\Gamma}^{k}_{ij}
= 2 W^{k}_{ij}.
    \]
\end{proof}
\begin{prop}
\label{prop-scal-differential}
In a local coordinates,
    \begin{equation}\label{eq-ricci-formula}
    R_{ij} = \bar{R}_{ij} + \bar{\nabla}_{k} W^{k}_{ij} - \bar{\nabla}_{i} W^{k}_{kj} + W^{p}_{ij} W^{k}_{kp} - W^{p}_{kj} W^{k}_{ip}.
    \end{equation}
    Here, $R_{ij}$ and $\bar{R}_{ij}$ denote the expressions of the Ricci tensors of $g$ and $\bar{g}$ respectively, in terms of the local coordinates.
\end{prop}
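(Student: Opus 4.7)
The plan is to reduce the identity to a direct local-coordinate computation, starting from the standard formula for the Ricci tensor in terms of Christoffel symbols and then substituting the decomposition $\Gamma^{k}_{ij} = \bar{\Gamma}^{k}_{ij} + W^{k}_{ij}$ provided by Proposition \ref{prop-W}. Specifically, I would begin from
\[
R_{ij} = \partial_{k} \Gamma^{k}_{ij} - \partial_{i} \Gamma^{k}_{kj} + \Gamma^{k}_{kp} \Gamma^{p}_{ij} - \Gamma^{k}_{ip} \Gamma^{p}_{kj},
\]
together with the analogous formula for $\bar{R}_{ij}$, and expand each occurrence of $\Gamma$ into $\bar{\Gamma} + W$.

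After expanding, I would sort the resulting terms into three groups. The purely $\bar{\Gamma}$ pieces assemble precisely into $\bar{R}_{ij}$, and the purely $W$-quadratic pieces are visibly $W^{p}_{ij} W^{k}_{kp} - W^{p}_{kj} W^{k}_{ip}$. The remaining terms are linear in $W$ and consist of the partial derivatives $\partial_{k} W^{k}_{ij}$, $\partial_{i} W^{k}_{kj}$ together with the cross terms of the form $\bar{\Gamma} \cdot W$. These must be shown to equal $\bar{\nabla}_{k} W^{k}_{ij} - \bar{\nabla}_{i} W^{k}_{kj}$, using the fact that $W$ is a genuine $(1,2)$-tensor so its covariant derivative with respect to $\bar{\nabla}$ is
\[
\bar{\nabla}_{a} W^{c}_{bd} = \partial_{a} W^{c}_{bd} + \bar{\Gamma}^{c}_{am} W^{m}_{bd} - \bar{\Gamma}^{m}_{ab} W^{c}_{md} - \bar{\Gamma}^{m}_{ad} W^{c}_{bm}.
\]

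The main obstacle is purely bookkeeping: one must verify that the cross terms $\bar{\Gamma}^{k}_{kp} W^{p}_{ij} + W^{k}_{kp} \bar{\Gamma}^{p}_{ij} - \bar{\Gamma}^{k}_{ip} W^{p}_{kj} - W^{k}_{ip} \bar{\Gamma}^{p}_{kj}$ produced by the expansion combine with the partial derivatives $\partial_{k} W^{k}_{ij} - \partial_{i} W^{k}_{kj}$ in exactly the right way to reconstitute the two covariant divergences above, with no residual $\bar{\Gamma}$-terms left over. This is in principle automatic since both sides of \eqref{eq-ricci-formula} are tensorial and the left-hand side must be expressible entirely through tensorial quantities, but the matching can equivalently be checked directly by substituting the formula above for $\bar{\nabla}_{k} W^{k}_{ij}$ (and for $\bar{\nabla}_{i} W^{k}_{kj}$) and observing that the $\bar{\Gamma} \cdot W$ contributions cancel term-by-term against those arising from the expansion of the $\Gamma \cdot \Gamma$ products. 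Once this identification is made, rearranging gives exactly \eqref{eq-ricci-formula}.
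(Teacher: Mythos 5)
Your proposal is correct and is essentially the same argument as the paper's: both are direct local-coordinate verifications based on writing $\Gamma^{k}_{ij} = \bar{\Gamma}^{k}_{ij} + W^{k}_{ij}$, expanding the covariant derivatives of the tensor $W$ in terms of $\bar{\Gamma}$, and matching terms against the standard coordinate formula for the Ricci tensor. The only difference is direction — you expand $R_{ij}$ and reassemble the right-hand side, while the paper expands the right-hand side and collapses it to $R_{ij} - \bar{R}_{ij}$ — which is an immaterial reordering of the same cancellation bookkeeping.
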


\begin{proof}
    \[
    \begin{split}
        W^{p}_{ij} W^{k}_{kp} &= (\Gamma^{p}_{ij} - \bar{\Gamma}^{p}_{ij}) (\Gamma^{k}_{kp} - \bar{\Gamma}^{k}_{kp}) = \Gamma^{p}_{ij} \Gamma^{k}_{kp} - \underset{(1)}{\underline{\bar{\Gamma}^{p}_{ij} \Gamma^{k}_{kp}}} - \underset{(2)}{\underline{\Gamma^{p}_{ij} \bar{\Gamma}^{k}_{kp}}} + \underset{(3)}{\underline{\bar{\Gamma}^{p}_{ij} \bar{\Gamma}^{k}_{kp}}} \\
        -W^{p}_{kj} W^{k}_{ip} &= -(\Gamma^{p}_{kj} - \bar{\Gamma}^{p}_{kj}) (\Gamma^{k}_{ip} - \bar{\Gamma}^{k}_{ip}) = -\Gamma^{p}_{kj} \Gamma^{k}_{ip} + \underset{(4)}{\underline{\bar{\Gamma}^{p}_{kj} \Gamma^{k}_{ip}}} + \underset{(6)}{\underline{\Gamma^{p}_{kj} \bar{\Gamma}^{k}_{ip}}} - \underset{(5)}{\underline{\bar{\Gamma}^{p}_{kj} \bar{\Gamma}^{k}_{ip}}} \\
        \bar{\nabla}_{k} W^{k}_{ij} &= \bar{\nabla}_{k} (\Gamma^{k}_{ij} - \bar{\Gamma}^{k}_{ij}) \\
        &= \partial_{k} \Gamma^{k}_{ij} - \partial_{k} \bar{\Gamma}^{k}_{ij} + \underset{(2)}{\underline{\Gamma^{p}_{ij} \bar{\Gamma}^{k}_{kp}}} - \underset{(7)}{\underline{\Gamma^{k}_{pj} \bar{\Gamma}^{p}_{ik} }}- \underset{(4)}{\underline{\Gamma^{k}_{ip} \bar{\Gamma}^{p}_{kj}}}
        - \bar{\Gamma}^{p}_{ij} \bar{\Gamma}^{k}_{kp} + \underset{(8)}{\underline{\bar{\Gamma}^{k}_{pj} \bar{\Gamma}^{p}_{ik}}} + \underset{(5)}{\underline{\bar{\Gamma}^{k}_{ip} \bar{\Gamma}^{p}_{kj}}} \\
         -\bar{\nabla}_{i} W^{k}_{kj} &= -\bar{\nabla}_{i} (\Gamma^{k}_{kj} - \bar{\Gamma}^{k}_{kj}) \\
        &= -\partial_{i} \Gamma^{k}_{kj} + \partial_{i} \bar{\Gamma}^{k}_{kj} - \underset{(6)}{\underline{\Gamma^{p}_{kj} \bar{\Gamma}^{k}_{ip}}} + \underset{(7)}{\underline{\Gamma^{k}_{pj} \bar{\Gamma}^{p}_{ki}}} + \underset{(1)}{\underline{\Gamma^{k}_{kp} \bar{\Gamma}^{p}_{ij}}}
        + \bar{\Gamma}^{p}_{kj} \bar{\Gamma}^{k}_{ip} - \underset{(8)}{\underline{\bar{\Gamma}^{k}_{pj} \bar{\Gamma}^{p}_{ki}}} - \underset{(3)}{\underline{\bar{\Gamma}^{k}_{kp} \bar{\Gamma}^{p}_{ij}}}.
    \end{split}
    \]
    Therefore, we have (where terms with the same number cancel each other)
    \[
\begin{split}
    \bar{\nabla}_{k} W^{k}_{ij} &- \bar{\nabla}_{i} W^{k}_{kj} + W^{p}_{ij} W^{k}_{kp} - W^{p}_{kj} W^{k}_{ip} \\
    &= \partial_{k} \Gamma^{k}_{ij} - \partial_{i} \Gamma^{k}_{kj} + \Gamma^{p}_{ij} \Gamma^{k}_{kp} - \Gamma^{p}_{kj} \Gamma^{k}_{ip} \\
    &~~~~~-(\partial_{k} \bar{\Gamma}^{k}_{ij} - \partial_{i} \bar{\Gamma}^{k}_{kj} + \bar{\Gamma}^{p}_{ij} \bar{\Gamma}^{k}_{kp} - \bar{\Gamma}^{p}_{kj} \bar{\Gamma}^{k}_{ip}) \\
    &= R_{ij} - \bar{R}_{ij}.
\end{split}
\]
\end{proof}
\begin{prop}\label{prop-scal-first-derivative}
    $DR|_{\bar{g}} (h) = -\Delta_{\bar{g}} (tr_{\bar{g}} h) + \mathrm{div}_{\bar{g}} (\mathrm{div}_{\bar{g}} h) - \langle \mathrm{Ric}_{\bar{g}}, h \rangle_{\bar{g}}.$
\end{prop}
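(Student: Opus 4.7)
The plan is to set $g_s := \bar{g} + sh$ for small $|s|$ and compute $DR|_{\bar{g}}(h) = \frac{d}{ds}\bigl|_{s=0} R_{g_s}$ directly from the trace formula $R_g = g^{ij} R_{ij}(g)$. Since $\frac{d}{ds}\bigl|_{s=0} g_s^{ij} = -h^{ij}$, I would first split
\[
DR|_{\bar{g}}(h) \;=\; -h^{ij}\bar{R}_{ij} \;+\; \bar{g}^{ij}\,DR_{ij}\bigl|_{\bar{g}}(h) \;=\; -\langle \mathrm{Ric}_{\bar{g}}, h\rangle_{\bar{g}} \;+\; \bar{g}^{ij}\,DR_{ij}\bigl|_{\bar{g}}(h),
\]
so that the desired $-\langle \mathrm{Ric}_{\bar{g}}, h\rangle_{\bar{g}}$ term already appears, and the remaining task is to identify the traced linearization of the Ricci tensor.

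To that end, apply Proposition \ref{prop-scal-differential} with the two metrics $(g_s, \bar{g})$ and take $\frac{d}{ds}\bigl|_{s=0}$. Since $W(g_s)|_{s=0} = 0$, the quadratic-in-$W$ terms in (\ref{eq-ricci-formula}) contribute only at order $s^{2}$, so
\[
DR_{ij}\bigl|_{\bar{g}}(h) \;=\; \bar{\nabla}_{k}\dot{W}^{k}{}_{ij} \;-\; \bar{\nabla}_{i}\dot{W}^{k}{}_{kj}, \qquad \dot{W}^{k}{}_{ij} := \tfrac{d}{ds}\bigl|_{s=0} W^{k}{}_{ij}(g_{s}).
\]
Proposition \ref{prop-W} combined with $\bar{\nabla}\bar{g} = 0$ then gives $\dot{W}^{k}{}_{ij} = \frac{1}{2}\bar{g}^{kl}\bigl(\bar{\nabla}_{i}h_{lj} + \bar{\nabla}_{j}h_{il} - \bar{\nabla}_{l}h_{ij}\bigr)$: the contribution from differentiating the $g^{kl}$ factor vanishes at $s = 0$ because Proposition \ref{prop-W} is already linear in $\bar{\nabla}g$ and $\bar{\nabla}\bar{g} = 0$.

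The remaining computation is pure index manipulation. Using the symmetry of $h$, two of the three terms in $\dot{W}^{k}{}_{kj}$ cancel after swapping dummy summation indices, yielding $\dot{W}^{k}{}_{kj} = \frac{1}{2}\bar{\nabla}_{j}(\mathrm{tr}_{\bar{g}}h)$ and hence $\bar{g}^{ij}\bar{\nabla}_{i}\dot{W}^{k}{}_{kj} = \frac{1}{2}\Delta_{\bar{g}}(\mathrm{tr}_{\bar{g}}h)$. Analogously, the first two terms of $\bar{g}^{ij}\dot{W}^{k}{}_{ij}$ coincide after relabelling (again by the symmetry of $h$), producing $\bar{g}^{ij}\dot{W}^{k}{}_{ij} = (\mathrm{div}_{\bar{g}}h)^{k} - \frac{1}{2}\bar{\nabla}^{k}(\mathrm{tr}_{\bar{g}}h)$. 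Taking one more divergence (and again using $\bar{\nabla}\bar{g}^{-1} = 0$ to move inverse metric factors past the derivative) gives $\bar{g}^{ij}\bar{\nabla}_{k}\dot{W}^{k}{}_{ij} = \mathrm{div}_{\bar{g}}(\mathrm{div}_{\bar{g}}h) - \frac{1}{2}\Delta_{\bar{g}}(\mathrm{tr}_{\bar{g}}h)$. Subtracting the two contributions yields $\bar{g}^{ij}DR_{ij}|_{\bar{g}}(h) = \mathrm{div}_{\bar{g}}(\mathrm{div}_{\bar{g}}h) - \Delta_{\bar{g}}(\mathrm{tr}_{\bar{g}}h)$, which combined with the first display gives the claimed identity.

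The only delicate point — and the step where sign conventions can easily slip — is the reduction of those two traces; once one exploits $\bar{\nabla}\bar{g} = 0$ to move inverse-metric factors freely past covariant derivatives, no Ricci identities or curvature corrections appear, so the main obstacle is purely the bookkeeping of indices and the careful use of the symmetry of $h$.
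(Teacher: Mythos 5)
Your proposal is correct and follows essentially the same route as the paper: both linearize $R_{g_s}=g_s^{ij}R_{ij}(g_s)$ along $g_s=\bar g+sh$, use Proposition \ref{prop-scal-differential} together with $W|_{s=0}=0$ to discard the quadratic-in-$W$ terms, compute $\dot W^{k}{}_{ij}=\tfrac12\bar g^{kl}(\bar\nabla_i h_{lj}+\bar\nabla_j h_{il}-\bar\nabla_l h_{ij})$ from Proposition \ref{prop-W}, and finish by the same trace/index bookkeeping. The only difference is presentational (you reduce the two traces separately, the paper cancels matched terms in one combined display), so no further comment is needed.
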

\begin{proof}
    From Proposition \ref{prop-scal-differential} ($g = g_{t} := \bar{g} + th~(|t| << 1)$), we have
    \[
    R_{g_{t}} = (\bar{g}^{ij} - th^{ij}) \bar{R}_{ij} + (\bar{g}^{ij} - th^{ij})(\bar{\nabla}_{k} W^{k}_{ij} - \bar{\nabla}_{j} W^{k}_{ki}) + \mathrm{other~terms}.
    \]
    Now, since $W|_{t = 0} = 0$, the ``other terms'' is vanishing when $t = 0$.
    From Proposition \ref{prop-W}, we have
    \[
    W^{k}_{ij} = \frac{1}{2} (\bar{g}^{kl} - t\bar{h}^{kl}) \left( \bar{\nabla}_{i}(\bar{g}_{lj} + th_{lj}) + \bar{\nabla}_{j} (\bar{g}_{il} + th_{il}) - \bar{\nabla}_{l}(\bar{g}_{ij} + th_{ij}) \right).
    \]
    Since $\bar{\nabla} \bar{g} = 0$, we have
    \[
    \frac{d}{dt} W^{k}_{ij}|_{t = 0} = \frac{1}{2} \bar{g}^{kl} (\bar{\nabla}_{i} h_{lj} + \bar{\nabla}_{j} h_{il} - \bar{\nabla}_{l} h_{ij}).
    \]
    Summing these up, we have
    \[
    \begin{split}
        \frac{d}{dt} R_{g_{t}}|_{t = 0} &= -\langle \mathrm{Ric}_{\bar{g}}, h \rangle + \bar{g}^{ij} \left( \bar{\nabla}_{k} \frac{d}{dt} W^{k}_{ij}|_{t=0} - \frac{d}{dt} \bar{\nabla}_{j} W^{k}_{ki}|_{t=0} \right) \\
        &= -\langle \mathrm{Ric}_{\bar{g}}, h \rangle + \frac{1}{2} \bar{g}^{ij} \left( \bar{\nabla}_{k} \bar{g}^{kl} (\bar{\nabla}_{i}h_{lj} + \bar{\nabla}_{j} h_{il} -\bar{\nabla}_{l}h_{ij}) \right) \\
        &- \frac{1}{2} \bar{g}^{ij} \left( \bar{\nabla}_{j} \bar{g}^{kl} (\bar{\nabla}_{k} h_{li} + \bar{\nabla}_{i} h_{kl} - \bar{\nabla}_{l} h_{ki}) \right) \\
        &= -\langle \mathrm{Ric}_{\bar{g}}, h \rangle + \frac{1}{2} \bar{g}^{ij} \bar{g}^{kl} \bar{\nabla}_{k} (\underset{(1)}{\underline{\bar{\nabla}_{i}h_{lj}}} + \bar{\nabla}_{j} h_{il} - \underset{(2)}{\underline{\bar{\nabla}_{l}h_{ij}}}) \\
        &- \frac{1}{2} \bar{g}^{ij} \bar{g}^{kl} \bar{\nabla}_{j} (\bar{\nabla}_{k} h_{li} + \underset{(2)}{\underline{\bar{\nabla}_{i} h_{kl}}} - \underset{(1)}{\underline{\bar{\nabla}_{l} h_{ki}}}) \\
        &=  -\langle \mathrm{Ric}_{\bar{g}}, h \rangle -\underset{(2)}{\underline{\bar{g}^{kl} \bar{\nabla}_{k} \bar{\nabla}_{l} (\bar{g}^{ij} h_{ij})}} + \underset{(1)}{\underline{\bar{g}^{ij} \bar{g}^{kl} \bar{\nabla}_{k} \bar{\nabla}_{i} h_{lj}}} \\
        &= -\langle \mathrm{Ric}_{\bar{g}}, h \rangle - \bar{\Delta} (\mathrm{tr}_{\bar{g}} h) + \mathrm{div}_{\bar{g}} (\mathrm{div}_{\bar{g}} h).
    \end{split}
    \]
    (The terms with the same number have canceled each other.)

    \noindent
    Here we have used 
    \begin{itemize}
        \item $W|_{t=0} = 0$ in the 1st equality,
        \item $\bar{\nabla} \bar{g} = 0$ in the 3rd and 4th equality,
        \item In the 4th equality, the other term is vanishing.
    \end{itemize}
\end{proof}
\noindent
Moreover, a more detailed calculation shows that if $g = \bar{g} + h~(\| h \|_{\bar{g}} << 1)$,
\[
R_{g} = \bar{R} + DR|_{\bar{g}}(h) + (\bar{g}+h)^{-1} h \bar{g}^{-1} h  \bar{g}^{-1} * \mathrm{Ric}_{\bar{g}} + g^{-1} * g^{-1} * g^{-1} * \bar{\nabla} h * \bar{\nabla} h,
\]
where the term $g^{-1} * g^{-1} * g^{-1} * \bar{\nabla} h * \bar{\nabla} h$ is a contraction of three copies of $g^{-1}$ (i.e., $g$ with raised indices) and two copies of $\bar{\nabla} h = \bar{\nabla} g$.
And, the term $(\bar{g}+h)^{-1} h \bar{g}^{-1} h \bar{g}^{-1} * \mathrm{Ric}_{\bar{g}}$ is the trace of $\mathrm{Ric}_{\bar{g}}$ with respect to $\left( (\bar{g}+h)^{-1} h \bar{g}^{-1} h \bar{g}^{-1} \right)^{-1}$.
Note that $\bar{g} + h$ is positive definite if $\| h\|_{\bar{g}}$ is small enough.
Indeed, this formula follows by taking both sides of (\ref{eq-ricci-formula}) in Proposition \ref{prop-scal-differential} with respect to $g = \bar{g} + h$ and 
using 
\[
(\bar{g} + h)^{-1} =\bar{g}^{-1} -\bar{g}^{-1} h \bar{g}^{-1} + (\bar{g}+h)^{-1} h \bar{g}^{-1} h \bar{g}^{-1}.
\]
The term $g^{-1} * g^{-1} * g^{-1} * \bar{\nabla} h * \bar{\nabla} h$ comes from the ``other terms'' in the proof of Proposition \ref{prop-scal-first-derivative}.

\section*{Acknowledgements}

The author was supported by JSPS KAKENHI Grant Number 24KJ0153.

\bigskip
\noindent
\textit{E-mail adress}:~hamanaka1311558@gmail.com

\smallskip
\noindent
\textsc{Department of Mathematics, Graduate School of Pure and Applied Sciences, University of Tsukuba, Tsukuba, Ibaraki, Japan}

\end{document}